\providecommand{\U}[1]{\protect\rule{.1in}{.1in}}
\numberwithin{equation}{section}
\newtheorem{theorem}{Theorem}[section]
\newtheorem{lemma}[theorem]{Lemma}
\newtheorem{corollary}[theorem]{Corollary}
\newtheorem{proposition}[theorem]{Proposition}
\newtheorem{remark}[theorem]{Remark}
\newtheorem{definition}[theorem]{Definition}
\newtheorem{hypothesis}[theorem]{Hypothesis}
\def\<{\langle}
\def\>{\rangle}
\def\d{{\rm d}}
\def\E{\mathbb{E}}
\def\N{\mathbb{N}}
\def\P{\mathbb{P}}
\def\R{\mathbb{R}}
\def\T{\mathbb{T}}
\def\Z{\mathbb{Z}}
\def\eps{\varepsilon}
\begin{document}

\title{Delayed blow-up by transport noise}
\author{Franco Flandoli\footnote{Email: franco.flandoli@sns.it. Scuola Normale Superiore of Pisa, Piazza dei Cavalieri 7, 56124 Pisa, Italy.}
\quad Lucio Galeati\footnote{Email: lucio.galeati@iam.uni-bonn.de. Institute of Applied Mathematics, University of Bonn, Germany.}
\quad Dejun Luo\footnote{Email: luodj@amss.ac.cn. Key Laboratory of RCSDS, Academy of Mathematics and Systems Science, Chinese Academy of Sciences, Beijing 100190, China, and School of Mathematical Sciences, University of the Chinese Academy of Sciences, Beijing 100049, China. } }

\maketitle

\vspace{-10pt}

\begin{abstract}
For some deterministic nonlinear PDEs on the torus whose solutions may blow up in finite time, we show that, under suitable conditions on the nonlinear term, the blow-up is delayed by multiplicative noise of transport type in a certain scaling limit. The main result is applied to the 3D Keller--Segel, 3D Fisher--KPP and 2D Kuramoto--Sivashinsky equations, yielding long-time existence for large initial data with high probability.
\end{abstract}

\textbf{Keywords:} transport noise, scaling limit, dissipation enhancement, Keller--Segel equation, Fisher--KPP equation, Kuramoto--Sivashinsky equation

\textbf{MSC (2020):} 60H15, 60H50

\section{Introduction}

In various applications, the time evolutions of certain quantities of interest are modelled by nonlinear partial differential equations (PDEs). Depending on the size of initial data, solutions to these nonlinear equations often exhibit a dichotomy of existence of global solutions or blow-up in finite time. More precisely, if the initial condition is below a certain threshold, then the equation admits a unique global solution; on the contrary, the solutions blow up in finite time for initial data above the threshold. It is commonly believed that this kind of blow-up may be suppressed by some background perturbation, either deterministic or stochastic; for examples on the former, we refer the reader to \cite{IXZ} and the references therein. In the present paper, we show that random perturbations of transport type work well in many cases and yield global existence for large initial data, with high probability (see Theorem \ref{Thm main} below for the precise meaning).

Consider a nonlinear deterministic equation on the torus $\mathbb{T}^{d} =\mathbb{R}^{d}/\mathbb{Z}^{d}$ of the form:
  \begin{equation}\label{PDE}
  \begin{cases}
  \partial_{t}u   = - (-\Delta)^{\alpha} u+F( u), \\
  u|_{t=0}  =u_{0},
  \end{cases}
  \end{equation}
where $u_{0}\in L^{2}\big(  \mathbb{T}^{d}\big)$ with $d\geq 2$, $\alpha\geq 1$ is fixed and $\Delta$ is the usual periodic Laplacian operator.
In \eqref{PDE}, $u$ can be vector valued functions, in which case $(-\Delta)^{\alpha}$ must be interpreted componentwise; $F$ is a nonlinearity satisfying suitable regularity assumptions, which will be specified below. We impose periodicity on initial conditions and solutions but we do not impose the zero mean condition, often asked in periodic equations to recover the easiest form of the Poincar\'{e} inequality; indeed, the zero mean condition is not preserved by $F$ in some of the examples treated below.
The intuition about the dynamics is that, if no blow-up occurs, dissipation tends to smooth solutions and $u( t,x)$ approaches its mean value $\int_{\T^d} u\left( t,x\right) \d x$ as $t$ increases; this is the typical behaviour of parabolic equations on compact manifolds.

In the recent works \cite{FlaLuo-20, Gal, FGL, LS, Luo20}, we considered some inviscid models (2D Euler or linear transport equations) perturbed by multiplicative noise of transport type. It turns out that these equations, originally of hyperbolic nature, converge weakly to parabolic equations under a suitable scaling of the noise; the larger the noise intensity, the higher the viscosity coefficient appearing in the limit equation. The same idea has recently been applied in \cite{FL19} to the vorticity formulation of 3D Navier-Stokes equations, showing that transport noise provides a blow-up control on the vorticity and gives long time existence, with large probability. This phenomenon has some similarity with the theory of stabilization by noise \cite{ArnoldCW, Arnold} in the finite dimensional setting, and is closely related to the mixing property and advection-induced dissipation enhancement which have been studied intensively in the literature, see \cite{Constantin, IXZ} and the references therein. A brief discussion of the relation between our results and those in \cite{Constantin, IXZ} will be given in Remark \ref{rem-discussion}.

Before stating our main result in Section \ref{subsec-main-result} on the effect of stochastic transport noise in preventing blow-up of solutions to abstract PDEs of the form \eqref{PDE}, we first mention several interesting models which will be considered in this paper.

\subsection{Some examples} \label{subsec-examples}

Our first example is the system of PDEs
\begin{equation}\label{keller-segel}\begin{cases}
\partial_t \rho = \Delta\rho  -\chi \nabla\cdot(\rho \nabla c)\\
-\Delta c = \rho -\rho_{\Omega}\\
\rho_{\Omega} =\int_{\Omega} \rho (x)\, \d x,
\end{cases}\end{equation}
which is commonly known as the Keller--Segel system; its exact description in terms of the general form \eqref{PDE} will be given in Section \ref{subsec keller-segel}. Although we will only deal with $\Omega=\mathbb{T}^d$, $d=2,3$ and periodic boundary conditions, for the sake of this preliminary discussion let us consider $\Omega$ to be a regular bounded domain of $\mathbb{R}^d$, with Neumann boundary condition.

System \eqref{keller-segel} is a simplified version, first considered in \cite{JL92}, of the model of chemotaxis introduced in \cite{Patlak, KS70, KS71}.
Here $\rho:\Omega\to\mathbb{R}$ describes the evolution of a bacterial population density whose motion is biased by the density of a chemoattractant $c:\Omega\to\mathbb{R}$ produced by the population itself; $\chi>0$ is a fixed sensitivity parameter.
The Keller--Segel system has received a lot of attention in mathematics literature as it exhibits the dichotomy behaviour described above: in $d=2$ the space $L^1(\Omega)$ is critical and it was shown in \cite{JL92} that for $\chi \rho_\Omega(0)$ below a critical threshold, global existence of regular solutions $(\rho,c)$ holds, but if $\Omega$ is a disk, there are examples of radially symmetric solutions blowing up in finite time; the blow-up mechanism is due to mass concentration and formation of Diracs for $\rho$.
The results from \cite{JL92} have been subsequently extended to the case $d=3$ in \cite{HMV97, HMV98}; for a detailed overview on the topic we refer the reader to \cite{Hor} and \cite[Chapter 5]{Perthame}.

The second example we will treat is the PDE
\begin{equation} \label{intro FisherKPP}
\partial_t u = \Delta u + u^2 -u
\end{equation}
which corresponds to \eqref{PDE} for the choice $\alpha=1$, $F(u)=u^2-u$; here $d=2, 3$. In the literature, equation \eqref{intro FisherKPP} is called the Fisher--KPP equation (see e.g. \cite{Fisher, KPP, McKean, Blath}), which has applications in spatial population genetics and ecology, modelling the spread of a beneficial allele subject to directional selection. In this case, spatially constant solutions $u(t,x)=y(t)$ satisfy the ordinary differential equation (ODE)%
\[
y^{\prime}(t) =y^{2}(t)  -y(t),
\]
so they exist globally in time and are bounded for initial data $y_0\leq1$, but they diverge to $+\infty$ in
finite time when $y_0>1$. For solutions with relatively small deviation from
its mean, we have a similar behaviour: global solutions for small
initial mean, blow-up for large initial mean. For not-nearly constant
solutions $u\left(  t,x\right)  $ the behavior can be more complicated, with a
balance between regions of high values that tend to explode in finite time and
regions of low values which help dissipating the higher ones.

Our last case of interest is the Kuramoto--Sivashinsky equation (cf. \cite{Kuramoto, Sivashinsky})
\begin{equation}\label{KSE}
  \partial_t u + \Delta^2 u + \Delta u + \frac12 |\nabla u|^2 =0.
\end{equation}
Differentiating the above equation and letting $\phi=\nabla u$ give us
\begin{equation}\label{KSE.1}
  \partial_t \phi + \Delta^2 \phi + \Delta \phi + \phi\cdot \nabla \phi =0 ,
\end{equation}
which is also called the  Kuramoto--Sivashinsky equation. One can reduce the equation \eqref{KSE} to \eqref{PDE} by taking $\alpha=2$ and $F(u)= -\Delta u -  \frac12 |\nabla u|^2$.  For $d=1$, global well posedness of \eqref{KSE.1} can be established exploiting the fact that the nonlinear term $\phi\partial_x \phi$ vanishes in energy type estimates; in this case, \eqref{KSE.1} has similar large scale behavior to that of the 1D viscous Burgers equation (or KPZ equation), see the discussion at the beginning of \cite{BCH}. However, due to the lack of a maximum principle, global well posedness of \eqref{KSE} in higher dimensions remains open, cf. \cite{BBT, LY20}; see also \cite{SellTab, Molinet} for some studies of \eqref{KSE} on thin 2D domains and the recent paper \cite{AmbMazz} for results on the time evolution of the radius of analyticity of solutions. Our results show that suitable random perturbations improve the well posedness of \eqref{KSE} in 2D, similar to those in \cite{FengMaz} in the deterministic setting.

\subsection{Our model, hypotheses and main result}\label{subsec-main-result}

Motivated by the above discussion, we consider the equation \eqref{PDE}  perturbed by a background  transport noise of the form
\begin{equation}\label{intro SPDE}
\partial_t u = -(-\Delta)^{\alpha} u + F(u) + \dot \eta(t) \circ \nabla u
\end{equation}
where $\eta(t,x)$ is some spatially divergence free noise and $\circ$ means that the stochastic differential will be understood in the Stratonovich sense.
Elementary considerations suggest that we can expect for \eqref{intro SPDE} similar results as for \eqref{PDE}: since Stratonovich noise obeys the classical chain rule and $\int_{\mathbb{T}^{d}} (  \eta(t)\cdot\nabla u ) u\, \d x=0$ due to the divergence free property of $\eta(t)$, equation \eqref{intro SPDE} enjoys the same a priori energy estimates as its deterministic counterpart; such estimates imply local solvability for arbitrary initial conditions and global solvability for small enough initial conditions.
But the techniques employed to establish blow-up for large initial data for \eqref{PDE} do not necessarily work for \eqref{intro SPDE}; hence, to be more precise, we expect the stochastic case to be not worse than the deterministic one.

The noise $\eta(t,x)$ adopted in this paper has the following form:
$$\eta(t,x) = \sqrt{C_d\nu} \sum_{k\in \Z^d_0} \sum_{i=1}^{d-1} \theta_k \sigma_{k,i}(x) W_{t}^{k,i} ,$$
where $C_d=d/(d-1)$ (see \eqref{key-identity} for a computation which justifies this choice) and $\nu>0$ is the noise intensity; $\theta = \{\theta_k \}_k \in \ell^2 = \ell^2(\Z_0^d)$, the space of square summable real sequences indexed by $\Z_0^d = \Z^d \setminus \{0\}$; up to relabelling $\nu$, we will always assume $\Vert \theta\Vert_{\ell^2}=1$. Moreover, it is enough to consider $\theta$ with finitely many nonzero components, and we shall always assume the symmetry property:
\begin{equation}\label{symmetry}
  \theta_k = \theta_l \quad \mbox{for all } k,l\in \Z^d_0 \mbox{ with } |k| =|l|.
\end{equation}
Next, the family $\{\sigma_{k,i}: k\in \Z^d_0, i=1,\ldots, d-1\}$ of periodic divergence free smooth vector fields are defined as follows. Let $\Z^d_0= \Z^d_+ \cup \Z^d_-$ be a partition of $\Z^d_0$ such that $\Z^d_+ = -\Z^d_-$. For any $k\in \Z^d_+$, take an ONB $\{a_{k,1}, \ldots, a_{k,d-1}\}$ of $k^\perp= \{y\in \R^d: y\cdot k=0\}$; let $a_{k,i}= a_{-k,i}$ for all $k\in \Z^d_-$. Then, we can define the vector fields
  $$\sigma_{k,i}(x) = a_{k,i} e^{2\pi {\rm i} k\cdot x}, \quad x\in \T^d,\, k\in \Z^d_0,\, i=1,\ldots, d-1, $$
where ${\rm i}$ is the imaginary unit. Finally, $\big\{W_{t}^{k,i}: k\in \Z^d_0,\, i=1,\ldots, d-1 \big\}$ are complex Brownian motions defined on a probability space $\left(  \Omega, \mathcal{F}, \mathbb{P} \right)  $, such that $\overline{W^{k,i}}=W^{-k,i}$ and $W^{k,i}$ and $W^{l,j}$ are independent whenever $k\neq - l$ or $i\neq j$. The above conditions can be summarised in the formula
\begin{equation}\label{covariation}
\big[ W^{k,i},W^{l,j} \big]_t= 2t\delta_{k,-l} \delta_{i,j}\quad \mbox{for all } k,l\in\Z^2_0,\, i,j\in \{1,\ldots, d-1\} .
\end{equation}
With these notations, the SPDE studied in this paper can be written more precisely as
\begin{equation}\label{PDE-perturbation} \begin{cases}
\d u =\big[  -(-\Delta)^{\alpha} u +F(u) \big]\, \d t+ \sqrt{C_d\nu}\sum_{k,i} \theta_k \sigma_{k,i}\cdot\nabla u\circ \d W_{t}^{k,i}, \\
u|_{t=0} = u_{0}.
\end{cases}\end{equation}
Here, $\sum_{k,i}$ stands for $\sum_{k\in \Z^d_0} \sum_{i=1}^{d-1}$.

\begin{remark}\label{rem-high-dim}
By \eqref{symmetry} and the definition of the vector fields $\{\sigma_{k,i} \}_{k,i}$, it is not difficult to show that the first equation in \eqref{PDE-perturbation} has the ``equivalent'' It\^o form (cf. \cite[Section 2.3]{Gal} or \cite[Section 2]{FL19} for the case $d=3$)
  $$\d u =\big[ -(-\Delta)^{\alpha} u+ \nu \Delta u + F(u) \big]\, \d t+ \sqrt{C_d\nu}\sum_{k,i} \theta_k \sigma_{k,i}\cdot\nabla u\, \d W_{t}^{k,i}. $$
We want to emphasize that, although the term $\nu \Delta u$ appears here, it does not mean that the dissipation has been enhanced at this stage. Take a sequence $\{\theta^N\}_{N\geq 1} \subset \ell^2$ such that
\begin{equation}\label{eq:cond-theta}
\|\theta^N \|_{\ell^2} =1 \quad (\forall\, N\geq 1), \quad \lim_{N\to\infty} \|\theta^N \|_{\ell^\infty} =0,
\end{equation}
and denote by $u^N$ the solution to the above equation corresponding to $\theta^N$. We shall show that, as $N\to \infty$, the martingale part will vanish in a suitable sense and we obtain a deterministic limit equation with enhanced dissipation:
  $$\partial_t u = - (-\Delta)^{\alpha} u+ \nu \Delta u + F (u). $$
This is the key for showing delayed blow-up of solutions to \eqref{PDE-perturbation}.
\end{remark}

We need some more notations to state the assumptions on the nonlinearity $F$. As usual, $L^2(\T^d)$ is the space of square integrable functions on $\T^d$ with the norm $\|\cdot \|_{L^2}$; we denote by $H^s(\T^d)\, (s\in \R)$ the usual (non-homogeneous) Sobolev space endowed with the norm $\|u \|_{H^s}= \|(1-\Delta)^{s/2} u \|_{L^2}$. The notation $\<\cdot, \cdot\>$ is used for the inner product in $L^2(\T^d)$ or the duality on $H^s(\T^d) \times H^{-s}(\T^d)$. In the sequel $a \lesssim b$ means that $a \leq C b$ for some unimportant constant $C>0$.

Now we are ready to state the assumptions on $F$, which are partly inspired by those from the variational method in SPDEs, see e.g. \cite[Section 4.1]{PR07} and \cite[Section 5]{LiuRoc2015}.   \medskip

\begin{hypothesis}
\begin{itemize}
\item[\rm(H1)] {\rm (Continuity)} There exist $ \beta_1 \geq 0$ and $\eta\in (0,\alpha)$ such that $F : H^{\alpha -\eta} \to H^{- \alpha}$ is continuous and
  \[ \|F(u)\|_{H^{-\alpha}} \lesssim  \big(1+ \|u\|_{L^2}^{\beta_1} \big) (1+\|u\|_{H^\alpha}) ; \]

\item[\rm(H2)] {\rm (Growth)} There exist $\beta_2\geq 0$ and $\gamma_2 \in (0,2)$ such that
  \[ | \langle F (u), u \rangle | \lesssim \big(1+ \|u\|_{L^2}^{\beta_2}\big) \big(1 + \| u \|_{H^\alpha}^{\gamma_2} \big) ; \]

\item[\rm(H3)] {\rm (Local monotonicity)} There exist $\beta_3, \kappa \geq 0$, $\gamma_3\in (0,2)$ such that $\beta_3 + \gamma_3\geq 2$, $\gamma_3 + \kappa \leq 2$ and
  \[ |\langle u-v, F(u)-F(v)\rangle | \lesssim \|u-v\|_{L^2}^{\beta_3} \|u-v\|_{H^\alpha}^{\gamma_3} \big( 1+\|u\|_{H^\alpha}^{\kappa} + \|v\|_{H^\alpha}^{\kappa} \big); \]

\item[\rm(H4)] There exists $\mathcal K\subset L^2(\mathbb{T}^d)$ convex, closed and bounded with the following property: for any $T>0$, we can find $\nu>0$ big enough such that the deterministic Cauchy problem
\begin{equation}\label{PDE enhanced viscosity}\begin{cases}
\partial_t u = - (-\Delta)^{\alpha} u+ \nu \Delta u + F (u), \\
u|_{t=0} = u_{0}
\end{cases}\end{equation}
admits a global solution $u\in L^2 (0,T; H^\alpha) \cap C([0,T];L^2)$ for any $u_0\in \mathcal K$, and moreover
\begin{equation}\label{unif bound enhanced viscosity}
\sup_{u_0\in \mathcal K} \sup_{t\in [0,T]} \Vert u(t;u_0,\nu)\Vert_{L^2}<\infty
\end{equation}
where $u(\,\cdot\,;u_0,\nu)$ denotes the unique solution to \eqref{PDE enhanced viscosity} with initial data $u_0$.
\end{itemize}
\end{hypothesis}

\begin{remark}\label{remark intro}
\begin{itemize}

\item[\rm(i)] In practice, conditions (H1)--(H3) are easy to check and they guarantee the existence and uniqueness of local solutions to both the deterministic equation \eqref{PDE} and the stochastic equation \eqref{PDE-perturbation}.

\item[\rm(ii)]
Although condition (H3) is sufficient for our purposes, let us point out that the proof works for the following more general condition:

\item[\rm{(H3$'$)}] There exist $N\in\mathbb{N}$ and nonnegative parameters $\beta_3^j,\gamma_3^j,\kappa_j,\kappa_j'$, $j=1,\ldots,N$ such that $\gamma_3^j\in (0,2)$, $\beta_3^j+\gamma_3^j\geq 2$, $\gamma_3^j + \kappa_j\leq 2$ for all $j$ and
\begin{equation*}
|\langle u-v, F(u)-F(v)\rangle|
\lesssim \sum_{j=1}^N \Vert u-v\Vert_{L^2}^{\beta_3^j} \Vert u-v\Vert_{H^\alpha}^{\gamma^j_3} \big(1+\Vert u\Vert_{H^\alpha}^{\kappa_j} +\Vert v\Vert_{H^\alpha}^{\kappa_j} \big) \big(1+\Vert u\Vert_{L^2}^{\kappa_j'} +\Vert v\Vert_{L^2}^{\kappa_j'} \big).
\end{equation*}
\item[\rm(iii)] Verification of hypothesis (H4) instead often requires nontrivial technical arguments, as will be shown in Section \ref{sec-example} for the PDEs given by \eqref{intro FisherKPP} and \eqref{KSE}. However, if the nonlinearity $F$ preserves the space of mean zero functions and we consider the dynamics restricted to this closed subspace of $L^2(\mathbb{T}^d)$, then it is rather immediate to verify (H4). Indeed, by (H2) and Young's inequality, any solution $u$ to \eqref{PDE enhanced viscosity} satisfies
\begin{align*}
  \frac{\d}{\d t} \| u \|_{L^2}^2 &
=  -2\Vert (-\Delta)^{\alpha/2} u\Vert_{L^2}^2 - 2 \nu \| \nabla u \|_{L^2}^2 + 2\langle F (u),u\rangle \\
  & \leq -2\Vert (-\Delta)^{\alpha/2} u\Vert_{L^2}^2 - 2 \nu \| \nabla u \|_{L^2}^2 + 2 C_1 \big(1 + \| u\|_{H^\alpha}^{\gamma_2} \big) \big(1+ \| u \|_{L^2}^{\beta_2} \big) \\
  & \leq -2\Vert (-\Delta)^{\alpha/2} u\Vert_{L^2}^2- 2\nu \| \nabla u \|_{L^2}^2 + 2^{2-\alpha} \Vert u\Vert_{H^\alpha}^2 + C_2\big(1+ \Vert u\Vert_{L^2}^{\tilde{\beta}} \big)
  \end{align*}
for some $C_1, C_2>0$ and $\tilde\beta= 2\beta_2/(2-\gamma_2) >0$. Using the fact $\| u\|_{H^\alpha}^2\leq 2^{\alpha-1} \big(\| u\|_{L^2}^2 + \|(-\Delta)^{\alpha/2} u\|_{L^2}^2 \big)$ and Poincar{\'e}'s inequality (with optimal constant $4\pi^2$), we obtain
  \begin{equation*}
  \frac{\d}{\d t} \| u \|_{L^2}^2
  \leq - 2 (4\pi^2 \nu - 1) \| u \|_{L^2}^2 + C_2\big(1+ \Vert u\Vert_{L^2}^{\tilde{\beta}} \big) = - \lambda_\nu \Vert u\Vert_{L^2}^2 + C_2\big(1+ \Vert u\Vert_{L^2}^{\tilde{\beta}} \big)
  \end{equation*}
where $\lambda_\nu := 2 (4\pi^2 \nu - 1)$. Observe that $\lambda_\nu$ can be as large as we want up to taking $\nu$ big enough. By the comparison principle it holds $ \| u(t) \|_{L^2}^2\leq x_t$, where $x_t$ is the solution with $x_0=\Vert u(0)\Vert_{L^2}^2$ to the ODE
  \[ \dot{x}_t = - \lambda_\nu\, x_t + C_2\big(1 + x_t^{\tilde{\beta}/2} \big).  \]
For any fixed $R\geq 0$, we can find $\lambda_\nu$ big enough, as well as a constant $C=C(\nu,R)$, such that the ODE starting from $x_0$ admits a global solution satisfying $x_t\leq C(\nu,R)$ for all $x_0\in [0,R]$. Hence hypothesis (H4) holds for $\mathcal K=\big\{ f\in L^2(\mathbb{T}^d): \int_{\mathbb{T}^d} f\,\d x =0, \Vert f\Vert_{L^2}\leq R \big\}$ for any $R\geq 0$.
\end{itemize}
\end{remark}

Our main result is that a sufficiently strong and rich noise improves the well posedness of the  equation \eqref{PDE}.
Given a deterministic $u_0\in L^2$, denote by $\tau=\tau(u_0,\nu,\theta)$ the random maximal time of existence of solutions $u(t;u_0,\nu, \theta)$ with trajectories in $C\left(  [0,\tau);L^{2}( \mathbb{T}^{d})  \right) $ to the stochastic problem \eqref{PDE-perturbation}.

\begin{theorem}\label{Thm main}
Assume $F$ satisfies (H1)--(H3) and $\mathcal K\subset L^2(\mathbb{T}^d)$ satisfies (H4). Then for any $T\in (0,\infty)$, for $\nu\in (0,\infty)$ as in (H4) depending on $T$, for every $\eps>0$, there exists $\theta\in \ell^2$ such that
\begin{equation}\label{thm main eq}
\mathbb{P} (  \tau(u_0,\nu,\theta) \geq T )  >1-\eps \qquad \forall \, u_0\in \mathcal K.
\end{equation}
Namely, with large probability uniformly over $u_0\in \mathcal K$, the maximal solution to \eqref{PDE-perturbation} with initial data $u_0$ has lifetime larger than  $T$.
\end{theorem}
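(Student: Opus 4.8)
The plan is to reduce the statement to the control of a first exit time and then exploit the scaling limit announced in Remark~\ref{rem-high-dim}. Fix $T$, take $\nu$ as in (H4), and let $\bar u$ denote the (global, by (H4)) solution of the enhanced equation \eqref{PDE enhanced viscosity}, with $M:=\sup_{u_0\in\mathcal K}\sup_{t\in[0,T]}\|\bar u(t)\|_{L^2}<\infty$. For a level $R$ to be fixed later and $\theta=\theta^N$ as in \eqref{eq:cond-theta}, let $u^N$ solve \eqref{PDE-perturbation} and set $\tau_R^N:=\inf\{t:\|u^N(t)\|_{L^2}\geq R\}$. By (H1)--(H3) the stochastic problem is locally well posed with trajectories in $C([0,\tau);L^2)$ and blow-up is an $L^2$ phenomenon, so $\{\tau_R^N\geq T\}\subseteq\{\tau\geq T\}$ and it suffices to produce $R$ and $N$ with $\sup_{u_0\in\mathcal K}\P(\tau_R^N<T)<\eps$; then I set $\theta=\theta^N$.

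First I would record the a priori estimates. Working in the It\^o form of Remark~\ref{rem-high-dim} and applying It\^o's formula to $\|u^N\|_{L^2}^2$, the choice $C_d=d/(d-1)$ forces the It\^o correction to cancel the $\nu\Delta u^N$ drift while the transport structure makes the martingale drop out, so up to $\tau_R^N$ one has the pathwise identity $\|u^N(t)\|_{L^2}^2+2\int_0^t\|(-\Delta)^{\alpha/2}u^N\|_{L^2}^2\,\d s=\|u_0\|_{L^2}^2+2\int_0^t\<F(u^N),u^N\>\,\d s$. Together with (H2) and Young's inequality this yields a bound, uniform in $N$ and $\omega$, for $u^N$ in $L^2(0,T\wedge\tau_R^N;H^\alpha)\cap L^\infty(0,T\wedge\tau_R^N;L^2)$. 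Note that this estimate alone does not prevent blow-up — it coincides with the one for the deterministic equation \eqref{PDE} — so the gain must be extracted from the limit.

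Next comes the scaling-limit step. Using the a priori bounds and the equation to control the time increments (the martingale part being estimated through its bracket), I would prove tightness of the laws of the stopped processes $u^N(\cdot\wedge\tau_R^N)$ in $C([0,T];H^{-\delta})\cap L^2(0,T;H^{\alpha-\eta})$, invoke Skorokhod's representation, and pass to the limit in the weak formulation. The decisive point is that the martingale $\sqrt{C_d\nu}\sum_{k,i}\theta_k^N\int_0^\cdot\sigma_{k,i}\cdot\nabla u^N\,\d W^{k,i}$ has quadratic variation bounded, after integrating by parts against a test function, by $C\nu\|\theta^N\|_{\ell^\infty}^2\int_0^\cdot\|u^N\|_{L^2}^2\,\d s$, which vanishes as $N\to\infty$ by \eqref{eq:cond-theta}; the It\^o drift $\nu\Delta u^N$ survives, so every limit point is the deterministic solution $\bar u$ of \eqref{PDE enhanced viscosity}, identified uniquely by (H1) and the local monotonicity (H3). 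This also upgrades the convergence to $u^N\to\bar u$ strongly in $L^2(0,T;H^{\alpha-\eta})$, hence in $L^2(0,T;L^2)$, in probability.

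Finally I would close the argument through the energy identity. Passing to the limit in the identity above and using weak lower semicontinuity of the $H^\alpha$-seminorm together with the strong convergence of the nonlinear term (via (H1)), one is led to the bound $\limsup_N\sup_{t\leq T}\|u^N(t)\|_{L^2}^2\leq M^2+2\nu\int_0^T\|\nabla\bar u\|_{L^2}^2\,\d t=:(M')^2$, the extra term being exactly the dissipation defect created by the enhancement; by the energy balance for $\bar u$ and (H2) this $M'$ is finite and bounded uniformly over $u_0\in\mathcal K$. Choosing $R>M'$ then gives $\P(\tau_R^N<T)\leq\P\big(\sup_{t\leq T\wedge\tau_R^N}\|u^N\|_{L^2}\geq R\big)\to0$, uniformly in $u_0\in\mathcal K$, which is the claim. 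The main obstacle is precisely this last step: since the pathwise energy contains no enhancement, the control of $\sup_t\|u^N\|_{L^2}$ cannot come from a Gronwall estimate and must be wrung out of the limit, so one must make the defect bound uniform in $t$ and turn the in-probability convergence into a high-probability exit estimate that is uniform over the only $L^2$-bounded (hence non-compact) set $\mathcal K$; I would handle this by a quantitative version of the convergence whose constants depend solely on $\|\theta^N\|_{\ell^\infty}$, $R$, $T$, $\nu$ and $\sup_{u_0\in\mathcal K}\|u_0\|_{L^2}$.
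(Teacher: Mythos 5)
Your high-level strategy is the same as the paper's (prove a scaling limit of \eqref{PDE-perturbation} to the enhanced-viscosity equation \eqref{PDE enhanced viscosity} as $\|\theta^N\|_{\ell^\infty}\to0$, then transfer the deterministic no-blow-up property back with high probability), and your intermediate steps (pathwise energy identity with the martingale cancelling, quadratic variation of order $\|\theta^N\|_{\ell^\infty}^2$, tightness plus Skorokhod plus identification of the limit) match the paper's Propositions on the cut-off equation. The genuine gap is exactly where you locate it yourself, at the final step, and your proposed fix does not close it. You stop (or would need to stop) the process at the first $L^2$-exit time, but the convergence you can actually establish lives in $C([0,T];H^{-\delta})\cap L^2(0,T;H^{\alpha-\eps})$ and only gives, at each fixed $t$, weak $L^2$ convergence of $u^N(t)$, hence \emph{lower} semicontinuity of $\|u^N(t)\|_{L^2}$ --- the wrong direction for bounding $\P(\sup_{t}\|u^N(t)\|_{L^2}\geq R)$. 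The energy-identity argument you sketch to get $\limsup_N\sup_t\|u^N(t)\|_{L^2}^2\leq (M')^2$ requires (i) convergence of $\int_0^t\langle F(u^N),u^N\rangle\,\d s$ uniformly in $t$, (ii) a uniform-in-$t$ lower bound on the dissipation integrals along a convergence that is only in law, and (iii) --- most seriously --- it is circular: the stopped process solves the equation only up to $\tau_R^N$, so you cannot identify its limit with $\bar u$ on all of $[0,T]$ without already knowing $\tau_R^N\geq T$ with high probability, and on the exit event weak convergence gives $\|\text{limit}\|_{L^2}\leq R$ rather than a contradiction. The paper's way around this is a different and decisive technical choice: the cut-off $g_R$ is taken in the $H^{-\delta}$ norm, not in $L^2$. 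The cut-off equation \eqref{PDE-cut-off} is then globally well posed (so the scaling limit can be proved on all of $[0,T]$ with no stopping), and since the convergence holds in $C([0,T];H^{-\delta})$, the functional $\sup_t\|\cdot\|_{H^{-\delta}}$ is continuous for that topology; one gets $\sup_t\|u^{R,N}(t)\|_{H^{-\delta}}\to\sup_t\|\bar u(t)\|_{H^{-\delta}}\leq\sup_t\|\bar u(t)\|_{L^2}\leq R-1$ in probability, so the cut-off is inactive on $[0,T]$ with probability $>1-\eps$ and $u^{R,N}$ solves the original equation there. (A by-product is that (H2) must be upgraded to the form (H2$'$) in terms of $\|u\|_{H^{-\delta}}$ so that the a priori estimate for the cut-off equation closes; see Remark \ref{rem-hypo-H2}.)

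The second, smaller gap is the uniformity over $u_0\in\mathcal K$. You assert a ``quantitative version of the convergence'' with constants depending only on $\|\theta^N\|_{\ell^\infty}$, $R$, $T$, $\nu$ and $\sup_{\mathcal K}\|u_0\|_{L^2}$, but no such rate is produced, and the identification of the limit in this framework goes through compactness and uniqueness rather than through a Gronwall estimate (a direct Gronwall bound on $\|u^N-\bar u\|_{L^2}$ via (H3) would involve exponentials of $\int_0^T\|u^N\|_{H^\alpha}^2\,\d t$ and the martingale term, which are not controlled uniformly in a useful way). The paper instead exploits that $\mathcal K$ is convex, closed and bounded, hence weakly compact in $L^2$, proves the scaling limit allowing weakly convergent initial data $u_0^l\rightharpoonup u_0$ (Proposition \ref{claim SPDE}), and obtains the uniform statement \eqref{main thm proof eq2} by a contradiction/subsequence argument. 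If you keep your route you must either supply the quantitative estimate or adopt this compactness argument, which in turn requires formulating your scaling-limit step for varying initial data.
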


We can deduce from Theorem \ref{Thm main} the following slightly stronger result; note that the assumptions (a) and (b) below are satisfied in many practical examples.

\begin{theorem}\label{thm-global-existence}
Assume the conditions (H1)--(H4). Moreover, assume
\begin{itemize}
\item[\rm(a)] the $L^2$-norm of the solution $u(t;u_0,\nu)$ to the deterministic equation \eqref{PDE enhanced viscosity} decreases exponentially fast, uniformly in $u_0\in \mathcal K$;
\item[\rm(b)] the stochastic equation \eqref{PDE-perturbation} admits a pathwise unique global solution for small initial condition.
\end{itemize}
Then, there exists $\theta\in \ell^2$ such that for  all $u_0\in \mathcal K$, the solution $u(t;u_0,\nu, \theta)$ to \eqref{PDE-perturbation} exists for all $t>0$ with large probability.
\end{theorem}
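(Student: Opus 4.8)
The plan is to deduce Theorem \ref{thm-global-existence} from Theorem \ref{Thm main} by a bootstrapping argument that chains together finite-time survival with the assumed smallness-driven global existence. The key observation is that Theorem \ref{Thm main} already gives, for any fixed horizon $T$, a noise coefficient $\theta$ under which the solution survives up to time $T$ with probability arbitrarily close to $1$, uniformly over $u_0\in\mathcal K$. Assumption (a) upgrades this: not only does the solution of the deterministic equation \eqref{PDE enhanced viscosity} stay bounded on $[0,T]$, it actually becomes small in $L^2$ by a deterministic time $T_0$ depending only on $\mathcal K$ and the desired smallness threshold, since the $L^2$-norm decays exponentially fast uniformly in $u_0\in\mathcal K$.

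First I would fix the smallness threshold $\delta>0$ below which assumption (b) guarantees pathwise global existence of \eqref{PDE-perturbation}. By (a), there is a deterministic time $T_0$ such that every deterministic solution starting in $\mathcal K$ satisfies $\|u(T_0;u_0,\nu)\|_{L^2}\leq\delta/2$. Next I would invoke the scaling-limit mechanism behind Theorem \ref{Thm main}: the stochastic solutions $u^N$ converge (in a suitable weak sense, with high probability) to the deterministic enhanced-dissipation solution as $\|\theta^N\|_{\ell^\infty}\to0$. Consequently, for $\theta$ chosen with sufficiently small $\ell^\infty$-norm, with probability $>1-\eps$ the stochastic solution survives up to $T_0$ \emph{and} its $L^2$-norm at time $T_0$ is within $\delta/2$ of the deterministic value, hence $\leq\delta$. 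This is the step where one must extract an $L^2$-proximity statement at the single time $T_0$ from the convergence already established for Theorem \ref{Thm main}, rather than merely a non-blow-up statement; morally it is the same estimate, but one needs the closeness of norms, not just finiteness.

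Then I would apply the Markov property at time $T_0$: on the event of survival with $\|u(T_0)\|_{L^2}\leq\delta$, the process restarts from an initial datum small enough that assumption (b) furnishes a pathwise unique global solution. Therefore, conditionally on this event, the solution exists for all $t>0$, and the event has probability $>1-\eps$ uniformly in $u_0\in\mathcal K$. Since $\eps$ is arbitrary, this yields the claimed global existence with large probability, with a single $\theta$ (depending on $\eps$, $\mathcal K$, and $\nu$) valid for all $u_0\in\mathcal K$ simultaneously.

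The main obstacle I anticipate is the transfer of the $L^2$-smallness at the intermediate time $T_0$ through the scaling limit: Theorem \ref{Thm main} as stated guarantees survival probability, and one must verify that the same convergence argument also controls $\|u^N(T_0)\|_{L^2}$ close to the deterministic solution on a high-probability event, uniformly over $u_0\in\mathcal K$. A secondary technical point is the clean application of the Markov property at a deterministic time together with the uniformly-in-$u_0$ nature of both (a) and (b), so that the final $\theta$ can be chosen independently of the particular $u_0\in\mathcal K$; this should follow from the uniformity already present in (H4), (a), and the statement of Theorem \ref{Thm main}, but it must be tracked carefully to avoid an implicit dependence on $u_0$.
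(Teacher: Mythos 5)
Your overall strategy (survive to an intermediate time, land on a small initial datum, then invoke (b) to continue globally) is the same as the paper's, but there is a genuine gap at the step you yourself flag and then wave away as ``morally the same estimate.'' You want $\|u^N(T_0)\|_{L^2}$ to be within $\delta/2$ of the deterministic value at a \emph{fixed deterministic} time $T_0$. The convergence established in the proof of Theorem \ref{Thm main} (Proposition \ref{claim SPDE}) takes place in $\mathcal X = L^2(0,T;H^{\alpha-\eps})\cap L^p(0,T;L^2)\cap C([0,T];H^{-\eps})$: pointwise in time you only get convergence in $H^{-\eps}$, while convergence in the $L^2$ spatial norm is only available in the time-integrated sense $L^p(0,T;L^2)$. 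The uniform a priori bounds ($\sup_t\|u^N(t)\|_{L^2}\leq K$ and $\|u^N\|_{L^2(0,T;H^\alpha)}\leq K$) do not let you interpolate up to convergence in $C([0,T];L^2)$ --- there is no pointwise-in-time control of any norm stronger than $L^2$, so high-frequency mass at the single time $T_0$ cannot be ruled out. Hence the claimed $L^2$-smallness of $u^N(T_0)$ does not follow from the machinery you are invoking, and since (b) requires smallness in $L^2$ (not $H^{-\eps}$), the handoff to global existence breaks down at exactly this point.

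The paper's proof repairs this in a simple way: it uses the closeness in $L^2(0,T;L^2)$ together with (a) (which, for $T$ large, makes $\|u(\cdot;u_0,\nu)\|_{L^2(T-1,T;L^2)}$ small) to conclude that $\|u^N\|_{L^2(T-1,T;L^2)}$ is small with large probability, and then a mean-value/pigeonhole argument produces a \emph{random} time $t(\omega)\in[T-1,T]$ at which $\|u^N(t(\omega))\|_{L^2}$ is below the threshold; condition (b) is then applied from that random time. (This is the same device used in the proof of Corollary \ref{cor:relaxation-enhancing}.) If you replace your fixed $T_0$ with such a random time --- restarting at the stopping time $\inf\{t\geq T-1:\|u(t)\|_{L^2}\leq\delta\}$ rather than at a deterministic instant --- the rest of your argument, including the uniformity in $u_0\in\mathcal K$, goes through as you describe.
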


\begin{proof}
The idea of proof is the same as \cite[Theorem 1.6]{FL19} and thus we only provide a sketch here. First, by Theorem \ref{Thm main}, for $T$ big enough, there exists $\theta\in \ell^2$ such that for  all $u_0\in \mathcal K$, the solution $u(\cdot;u_0,\nu, \theta)$ to  the approximating stochastic equations \eqref{PDE-perturbation} has a life time greater than $T$, with large probability. The proof of Theorem \ref{Thm main} also shows that, with large probability, $u(\cdot;u_0,\nu, \theta)$ is very close to $u(\cdot;u_0,\nu)$ in the topology of $L^2(0,T; L^2)$. Combined with (a), we know that $\|u(t;u_0,\nu, \theta) \|_{L^2}$ is small enough for some $t= t(\omega)\in [T-1, T]$, with large probability. Now we  conclude the assertion  from condition (b).
\end{proof}

Although the main focus of this paper is the study of nonlinear equations of the form \eqref{intro SPDE}, our techniques provide interesting results also for $F\equiv 0$ and $\alpha=1$. In this case the equations in consideration become respectively
\begin{equation}\label{eq:STLE}
\partial_t u = \Delta u+ \dot \eta(t)\circ \nabla u
\end{equation}
and
\begin{equation}\label{eq:HE}
\partial_t u = (1+\nu) \Delta u
\end{equation}
subject to the initial condition $u\vert_{t=0} =u_0\in L^2$. As before, we denote their solutions respectively by $u(\cdot;u_0,\nu,\theta)$ and $u(\cdot;u_0,\nu)$; finally, let us set $\bar{u}_0=\int_{\T^d} u_0(x)\, \d x$.

\begin{corollary}\label{cor:relaxation-enhancing}
For every $\delta>0$, $\tau>0$ and $\eps>0$ there exist $(\nu,\theta)\in \R_+\times\ell^2$ such that
\begin{equation}\label{eq:relaxation-enhancing}
\P\big( \| u(\tau;u_0,\nu,\theta)-\bar{u}_0\|_{L^2} < \delta \big) >1-\eps \quad \text{uniformly over }\| u_0\|_{L^2}=1.
\end{equation}
\end{corollary}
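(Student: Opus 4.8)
The plan is to reduce everything to the mean-zero sector, where the enhanced heat equation \eqref{eq:HE} decays exponentially fast and the stochastic equation \eqref{eq:STLE} has a pathwise non-increasing energy, and then to import the scaling limit already established in the proof of Theorem \ref{Thm main}. First I would observe that the spatial mean is conserved by both dynamics: integrating \eqref{eq:STLE} over $\T^d$ and using $\int_{\T^d}\sigma_{k,i}\cdot\nabla u\,\d x=-\int_{\T^d}(\div\sigma_{k,i})u\,\d x=0$ together with $\int_{\T^d}\Delta u\,\d x=0$ gives $\int_{\T^d}u(t,x)\,\d x=\bar u_0$ for all $t$, a.s.; the same holds for \eqref{eq:HE}. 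Hence $w^\theta(t):=u(t;u_0,\nu,\theta)-\bar u_0$ and $\bar w(t):=u(t;u_0,\nu)-\bar u_0$ are mean-zero and, by linearity of the equations, solve \eqref{eq:STLE} and \eqref{eq:HE} respectively with the mean-zero datum $w_0=u_0-\bar u_0$, which satisfies $\|w_0\|_{L^2}\le\|u_0\|_{L^2}=1$. Since $u(\tau;u_0,\nu,\theta)-\bar u_0=w^\theta(\tau)$, the target \eqref{eq:relaxation-enhancing} becomes $\P(\|w^\theta(\tau)\|_{L^2}<\delta)>1-\eps$, uniformly over such $w_0$ (global existence being automatic here since $F\equiv0$).

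Next I would record two elementary facts. First, the a priori energy estimate for \eqref{eq:STLE} (the transport term drops out by the divergence-free property, and its Itô correction exactly cancels the drift $\nu\Delta$, as in Remark \ref{rem-high-dim}) gives the pathwise identity $\frac{\d}{\d t}\|w^\theta(t)\|_{L^2}^2=-2\|\nabla w^\theta(t)\|_{L^2}^2\le0$, so $t\mapsto\|w^\theta(t)\|_{L^2}^2$ is non-increasing; consequently
\[
\|w^\theta(\tau)\|_{L^2}^2\le\frac1\tau\int_0^\tau\|w^\theta(t)\|_{L^2}^2\,\d t=\frac1\tau\,\|w^\theta\|_{L^2(0,\tau;L^2)}^2 .
\]
Second, for the deterministic enhanced equation Poincaré's inequality (optimal constant $4\pi^2$ on the mean-zero sector) yields $\|\bar w(t)\|_{L^2}^2\le e^{-8\pi^2(1+\nu)t}\|w_0\|_{L^2}^2$, whence
\[
\|\bar w\|_{L^2(0,\tau;L^2)}^2\le\frac{\|w_0\|_{L^2}^2}{8\pi^2(1+\nu)}\le\frac{1}{8\pi^2(1+\nu)} .
\]

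I would then combine these through the scaling limit. Specialising the convergence used in the proof of Theorem \ref{Thm main} to $F\equiv0$, $\alpha=1$ (see also the quantitative rates in \cite{Gal,FGL}), one has, for $\|\theta\|_{\ell^2}=1$ with $\|\theta\|_{\ell^\infty}\to0$, that $\E\|w^\theta-\bar w\|_{L^2(0,\tau;L^2)}^2\to0$ uniformly over $\|w_0\|_{L^2}\le1$; the uniformity is forced by linearity, since $w_0\mapsto w^\theta-\bar w$ is a bounded linear map whose operator norm is controlled by a quantity vanishing with $\|\theta\|_{\ell^\infty}$. Now fix $\nu$ so large that $\frac{2}{\tau}\,\|\bar w\|_{L^2(0,\tau;L^2)}^2\le\frac{1}{4\pi^2\tau(1+\nu)}\le\frac{\delta^2}{2}$, then fix $\theta$ with $\|\theta\|_{\ell^\infty}$ small enough that $\E\|w^\theta-\bar w\|_{L^2(0,\tau;L^2)}^2\le\frac{\eps\tau\delta^2}{4}$ uniformly in $w_0$. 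Using $\|w^\theta\|^2\le2\|w^\theta-\bar w\|^2+2\|\bar w\|^2$ in the monotonicity bound above, together with Chebyshev's inequality, gives
\[
\P\big(\|w^\theta(\tau)\|_{L^2}^2\ge\delta^2\big)\le\P\Big(\tfrac{2}{\tau}\|w^\theta-\bar w\|_{L^2(0,\tau;L^2)}^2\ge\tfrac{\delta^2}{2}\Big)\le\frac{(2/\tau)\,\E\|w^\theta-\bar w\|_{L^2(0,\tau;L^2)}^2}{\delta^2/2}\le\eps ,
\]
uniformly over the unit sphere, which is the assertion.

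The main obstacle is exactly this last step: transferring the scaling-limit convergence, which is naturally available only in the space-time topology $L^2(0,\tau;L^2)$ (or in an even weaker negative-order Sobolev norm), into control of the pointwise-in-time $L^2(\T^d)$-norm at $t=\tau$, and doing so uniformly over the noncompact unit sphere. The pathwise monotonicity of the energy is what resolves the first difficulty, as it converts space-time smallness into endpoint smallness, while the linearity of the $F\equiv0$ problem resolves the second by turning ``uniform over the unit ball'' into an operator-norm statement. The point requiring care is that the quantitative scaling-limit estimate be genuinely uniform in the initial datum, which is where one invokes the explicit rates for the linear transport-diffusion problem.
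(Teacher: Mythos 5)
Your proof follows essentially the same route as the paper's: reduce to the mean-zero sector, use the pathwise monotonicity of $t\mapsto\|u(t;u_0,\nu,\theta)\|_{L^2}$ to convert the endpoint norm at $t=\tau$ into a space-time $L^2(0,\tau;L^2)$ quantity, kill the deterministic part by taking $\nu$ large via the Poincar\'e decay, and control the remaining difference by the scaling limit from the proof of Theorem \ref{Thm main}. The one place where you diverge, and where you should be more careful, is the uniformity over $\|u_0\|_{L^2}=1$ of the scaling-limit step. Your assertion that ``uniformity is forced by linearity'' is not a valid argument on its own: pointwise convergence to zero of a family of bounded linear operators does not imply convergence in operator norm (think of coordinate projections on $\ell^2$), so linearity by itself buys nothing here. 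What saves you is exactly what you mention in passing: for the linear problem $F\equiv 0$ the convergence $u(\cdot;u_0,\nu,\theta_N)\to u(\cdot;u_0,\nu)$ comes with explicit quantitative rates in terms of $\|\theta_N\|_{\ell^\infty}$ and $\|u_0\|_{L^2}$ (as in \cite{Gal,FGL}), which are genuinely uniform over the unit ball; alternatively, the paper obtains the uniform-in-$u_0$ statement in probability by the same weak-compactness/contradiction argument used for Theorem \ref{Thm main}, applied to the weakly compact unit ball of $L^2$. Either fix is fine, but one of them must be invoked explicitly; with that, your argument (including the passage from convergence in probability to convergence of $\mathbb{E}\|w^\theta-\bar w\|_{L^2(0,\tau;L^2)}^2$, which is legitimate since the relevant norms are pathwise bounded) is correct.
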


\begin{proof}
By linearity we can assume $\bar{u}_0=0$; observe that $\P$-a.s. $t\mapsto \| u(t;u_0,\nu,\theta)\|_{L^2}$ is decreasing and $\| u(t;u_0,\nu)\|_{L^2} \leq \exp(-4\pi^2(1+\nu) t)$ (since $\| u_0\|_{L^2}=1$). Set $I=[\tau/2,\tau]$ and choose $\nu$ big enough such that
  $$\|u(\cdot;u_0,\nu) \|_{L^2(I;L^2)} \leq \sqrt{\frac\tau 2} \exp(-2\pi^2(1+\nu)\tau)\leq \frac\delta 2\sqrt{\frac\tau 2} .$$
Then we have
\begin{align*}
\P(\| u(\tau;u_0,\nu,\theta)\|_{L^2} \geq \delta)
& \leq \P\Big(\inf_{t\in I} \|u(t;u_0,\nu,\theta)\|_{L^2} \geq \delta\Big)\\
& \leq \P \bigg(\| u(\cdot;u_0,\nu,\theta)\|_{L^2(I;L^2)} \geq \delta\sqrt{\frac{\tau}{2}}\, \bigg) \\
& \leq \P \bigg(\| u(\cdot;u_0,\nu,\theta)-u(\cdot;u_0,\nu)\|_{L^2(I;L^2)} \geq \frac\delta 2\sqrt{\frac\tau 2} \, \bigg).
\end{align*}
Now take a sequence $\{\theta_N\}_N \subset \ell^2$ satisfying \eqref{eq:cond-theta}; by the same argument as in the proof of Theorem \ref{Thm main}, it holds
\[
\lim_{N\to\infty} \sup_{\| u_0\|_{L^2} \leq 1} \P\bigg(\| u(\cdot;u_0,\nu,\theta_N)-u(\cdot;u_0,\nu)\|_{L^2(0,T;L^2)} \geq \frac{\delta}{2}\sqrt{\frac{\tau}{2}}\,\bigg)=0
\]
which implies the conclusion.
\end{proof}

Let us mention that if $L^2$ is replaced by $H^{-s}$ for any $s>0$, then a similar statement can be proved for the inviscid version of \eqref{eq:STLE}, see also \cite[Section 6.3]{FGL}.

Before finishing this section, we give a short remark on the relation between our results and those in \cite{Constantin, IXZ}.

\begin{remark}\label{rem-discussion}
In \cite{Constantin}, for a suitable incompressible flow $b$ on a compact manifold $M$, the authors consider an advection diffusion equation of the form $\partial_t u = A\,b\cdot\nabla u + \Delta u$, which can be regarded as \eqref{eq:STLE} with $\dot\eta$ replaced by $A\, b$; here $A\in \R$. They study the relaxation enhancing property of $b$, which loosely speaking amounts to the solution $u$ getting arbitrarily close to its mean value $\bar{u}$ in arbitrarily short time, once $A$ is chosen big enough; in this sense, Corollary \ref{cor:relaxation-enhancing} gives an analogous statement with $A$ replaced by $(\nu,\theta)$. Let us however point out that in estimate \eqref{eq:relaxation-enhancing} the event of large probability depends on $u_0$, and we do not know whether it is possible to invert the quantifiers, namely to find an event of high probability on which the statement holds true for all $\|u_0\|_{L^2}=1$ at the same time. Theorem \ref{Thm main} instead is closer in spirit to the results from \cite{IXZ}, where the authors show that the addition of a deterministic transport term suppresses potential singularity in some nonlinear systems; here this is achieved, with large probability, by the multiplicative noise of transport type $\dot\eta$.
\end{remark}

Theorem \ref{Thm main} will be proved in Section \ref{sec3}, following the method of scaling limit from our recent papers  \cite{FlaLuo-20, Gal, FGL, LS}. We check in Section \ref{sec-example} that the nonlinearities $F$ given in the examples in Section \ref{subsec-examples} satisfy the hypotheses (H1)--(H4), the main efforts being devoted to the last one. In the appendix we show that, if a different scaling regime is considered, then in the limit only trivial, i.e. spatially constant, solutions to \eqref{PDE} can be obtained.

\section{Verifications of Examples} \label{sec-example}

In this section we check that conditions (H1)--(H4) are satisfied respectively for the Keller--Segel, Fisher--KPP and Kuramoto--Sivashinsky equations.

\subsection{The Keller--Segel system}\label{subsec keller-segel}

In order to check conditions (H1)--(H4) for system \eqref{keller-segel}, we need some preparations first. We set the parameter $\chi=1$ for simplicity, the other cases being similar. We will do the computations  only in the 3D case, as the 2D case is easier. For $f\in L^1(\T^3)$, we write $f_{\T^3}$ for the average $\int_{\T^3} f\,\d x$. Observe that if $\rho$ solves \eqref{keller-segel}, then it  has necessarily constant mean, i.e. $\rho_{\mathbb{T}^3} (t) = \rho_{\mathbb{T}^3} (0) =:\lambda$.

For any $f\in L^2(\mathbb{T}^3)$, there exists a unique $g\in H^2(\mathbb{T}^3)$ with zero mean such that $-\Delta g = f -f_{\mathbb{T}^3}$, which is usually denoted by $(-\Delta)^{-1} f$. For any $f\in L^2(\mathbb{T}^3)$, define the operator
\begin{equation*}
\nabla^{-1} f := \nabla (-\Delta)^{-1}(f-f_{\mathbb{T}^3});
\end{equation*}
it is possible to show that $\nabla^{-1}$ extends to a linear continuous operator from $H^s(\mathbb{T}^3)$ to $H^{1+s}(\mathbb{T}^3)$ for any $s\in \mathbb{R}$. Moreover by construction, for any $f\in L^2(\mathbb{T}^3)$, it holds $-\nabla\cdot\nabla^{-1} f = f-f_{\mathbb{T}^3}$.

With these notations in mind, setting $u(t):=\rho(t)-\rho_{\mathbb{T}^3}(t)= \rho(t)-\lambda$, it is easy to check that $\rho$ solves \eqref{keller-segel} if and only if $u$ is a zero mean function solving the equation
\[
\partial_t u = \Delta u - \nabla\cdot[(u+\lambda)\nabla^{-1}u]
\]
and using the property $-\nabla\cdot (\lambda\nabla^{-1}u)=\lambda u$ we can finally rewrite it as
\begin{equation}\label{keller-segel 2}
\partial_t u = \Delta u -\nabla\cdot[u \nabla^{-1} u] + \lambda u.
\end{equation}
From now on we will focus exclusively on the PDE \eqref{keller-segel 2}, which corresponds to \eqref{PDE} for the choice $\alpha=1$, $F(u)=-\nabla\cdot[u \nabla^{-1} u] + \lambda u$. This comes without loss of generality, as equation \eqref{keller-segel 2} is equivalent to the original system \eqref{keller-segel}, up to the knowledge of the parameter $\lambda$.

\begin{lemma}
The nonlinearity $F(u)=-\nabla\cdot[u\nabla^{-1}u] +\lambda u$ satisfies hypotheses (H1)--(H3) and any $u_0\in L^2(\mathbb{T}^3)$ with zero mean satisfies assumption (H4).
\end{lemma}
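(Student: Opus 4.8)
The plan is to verify (H1)--(H3) by direct estimates exploiting that $\nabla^{-1}$ gains one derivative (so $\nabla^{-1}\colon H^s\to H^{s+1}$ is bounded and $-\nabla\cdot\nabla^{-1}f=f-f_{\T^3}$), together with the three-dimensional Sobolev embeddings $H^{1/2}\hookrightarrow L^3$, $H^{3/4}\hookrightarrow L^4$, $H^1\hookrightarrow L^6$ and the interpolation inequality $\|u\|_{H^s}\lesssim\|u\|_{L^2}^{1-s}\|u\|_{H^1}^{s}$ for $s\in(0,1)$; the verification of (H4) will then follow almost for free from Remark~\ref{remark intro}(iii).

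For (H1), since $\nabla\cdot$ sends $L^2$ into $H^{-1}$, I would reduce to $\|F(u)\|_{H^{-1}}\lesssim\|u\nabla^{-1}u\|_{L^2}+|\lambda|\,\|u\|_{L^2}$ and estimate $\|u\nabla^{-1}u\|_{L^2}\le\|u\|_{L^4}\|\nabla^{-1}u\|_{L^4}\lesssim\|u\|_{H^1}\|u\|_{L^2}$ (using $\|u\|_{L^4}\lesssim\|u\|_{H^{3/4}}$ and $\|\nabla^{-1}u\|_{L^4}\lesssim\|u\|_{L^2}$), which gives the required growth with $\beta_1=1$; continuity of $F\colon H^{3/4}\to H^{-1}$ (so $\eta=1/4$) then comes from the bilinear splitting $u\nabla^{-1}u-v\nabla^{-1}v=w\nabla^{-1}u+v\nabla^{-1}w$, $w:=u-v$, and the same Hölder bound. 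For (H2), I would integrate by parts and use $-\nabla\cdot\nabla^{-1}u=u-u_{\T^3}$ to expose a cubic structure, $\langle-\nabla\cdot[u\nabla^{-1}u],u\rangle=\langle u\nabla^{-1}u,\nabla u\rangle=\tfrac12\langle u^2,u-u_{\T^3}\rangle$, so that $\langle F(u),u\rangle$ is a multiple of $\int_{\T^3}u^3$ up to the harmless term $\lambda\|u\|_{L^2}^2$; bounding $|\int u^3|\le\|u\|_{L^3}^3\lesssim\|u\|_{H^{1/2}}^3\lesssim\|u\|_{L^2}^{3/2}\|u\|_{H^1}^{3/2}$ yields (H2) with $\gamma_2=3/2<2$ and $\beta_2=3/2$.

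The step I expect to be the main obstacle is (H3), where one must simultaneously respect $\beta_3+\gamma_3\ge2$ and $\gamma_3+\kappa\le2$. Writing $w=u-v$ and $F(u)-F(v)=-\nabla\cdot[w\nabla^{-1}u+v\nabla^{-1}w]+\lambda w$, the naive Hölder bound on $\langle\nabla w,w\nabla^{-1}u\rangle$ places too many derivatives on $w$ and violates $\gamma_3+\kappa\le2$. The remedy is again to integrate by parts, $\langle\nabla w,w\nabla^{-1}u\rangle=\tfrac12\langle w^2,u-u_{\T^3}\rangle$, and estimate $|\langle w^2,u\rangle|\le\|u\|_{L^3}\|w\|_{L^3}^2\lesssim\|u\|_{H^1}\|w\|_{L^2}\|w\|_{H^1}$ via $\|w\|_{H^{1/2}}^2\lesssim\|w\|_{L^2}\|w\|_{H^1}$; the companion term obeys $|\langle\nabla w,v\nabla^{-1}w\rangle|\le\|\nabla w\|_{L^2}\|v\|_{L^3}\|\nabla^{-1}w\|_{L^6}\lesssim\|w\|_{L^2}\|w\|_{H^1}\|v\|_{H^1}$. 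Both are of the admissible form with $\beta_3=\gamma_3=\kappa=1$, so that $\beta_3+\gamma_3=2$ and $\gamma_3+\kappa=2$ hold at the boundary.

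Finally, for (H4) I would note that $\int_{\T^3}F(u)=\lambda\int_{\T^3}u$, so $F$ maps mean-zero functions to mean-zero functions and the flow of \eqref{PDE enhanced viscosity} preserves the closed subspace of zero-mean functions; there Poincaré's inequality holds with optimal constant, and since (H2) has been established with $\gamma_2<2$, the energy computation of Remark~\ref{remark intro}(iii) applies verbatim and delivers the uniform a priori bound \eqref{unif bound enhanced viscosity}. Combined with the local well-posedness furnished by (H1)--(H3), this gives a global solution with the stated uniform bound for $\mathcal K=\{f\in L^2(\T^3):\int_{\T^3}f=0,\ \|f\|_{L^2}\le R\}$ and arbitrary $R\ge0$, so that in this example the a priori most demanding hypothesis reduces to the transparent argument of Remark~\ref{remark intro}(iii).
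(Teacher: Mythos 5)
Your proposal is correct, and for (H1), (H2) and (H4) it coincides with the paper's argument: the same bilinear splitting $u\nabla^{-1}u-v\nabla^{-1}v=(u-v)\nabla^{-1}u+v\nabla^{-1}(u-v)$ with H\"older and Sobolev embeddings for (H1), the same reduction of $\langle F(u),u\rangle$ to $\int_{\mathbb{T}^3}u^3$ bounded via $H^{1/2}\subset L^3$ and interpolation for (H2), and the observation that $F$ preserves zero mean so that Remark \ref{remark intro}(iii) applies for (H4). The one place you genuinely diverge is (H3), and your stated reason for doing so is mistaken: the ``naive'' route does \emph{not} violate $\gamma_3+\kappa\leq 2$. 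The paper simply pairs the (H1) estimate with duality,
\[
|\langle G(u)-G(v),u-v\rangle|\leq \Vert G(u)-G(v)\Vert_{H^{-1}}\Vert u-v\Vert_{H^1}\lesssim \Vert u-v\Vert_{L^2}\Vert u-v\Vert_{H^1}\big(\Vert u\Vert_{H^1}+\Vert v\Vert_{H^1}\big),
\]
which lands exactly on $\beta_3=\gamma_3=\kappa=1$, i.e.\ on the boundary $\beta_3+\gamma_3=2$, $\gamma_3+\kappa=2$, with no integration by parts needed; the key point is that the (H1) bound puts only the $L^2$-norm (not the $H^1$-norm) on the difference $u-v$. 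Your alternative --- integrating by parts to turn $\langle \nabla w, w\nabla^{-1}u\rangle$ into $\tfrac12\langle w^2,u-u_{\mathbb{T}^3}\rangle$ and estimating the two pieces separately --- is also valid and arrives at the same exponents, so nothing is lost; it is just more work than necessary, and you should be slightly more careful to record the harmless lower-order contributions you elide (the $u_{\mathbb{T}^3}\Vert w\Vert_{L^2}^2$ term in (H3) and the $u_{\mathbb{T}^3}\Vert u\Vert_{L^2}^2$ term in (H2), both of which are absorbed since $|u_{\mathbb{T}^3}|\leq\Vert u\Vert_{L^2}$).
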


\begin{proof}
We can ignore the linear term $\lambda u$ and only focus on $G(u)=\nabla\cdot[u\nabla^{-1} u]$.\smallskip

\emph{Verification of} (H1). Clearly,
\[
\Vert G(u)-G(v)\Vert_{H^{-1}} = \Vert \nabla\cdot(u\nabla^{-1}u -v\nabla^{-1} v)\Vert_{H^{-1}} \leq \Vert u\nabla^{-1}u -v\nabla^{-1} v \Vert_{L^2};
\]
by H\"older's inequality and Sobolev embeddings $H^{7/4}(\T^3) \subset L^\infty(\T^3)$ and $H^{3/4}(\T^3) \subset L^4(\T^3)$, it holds
\begin{align*}
\Vert u\nabla^{-1}u -v\nabla^{-1} v\Vert_{L^2}
& \leq \Vert (u-v)\nabla^{-1} u\Vert_{L^2} + \Vert v\nabla^{-1}(u-v)\Vert_{L^2}\\
& \leq \Vert u-v\Vert_{L^2} \Vert \nabla^{-1} u\Vert_{L^\infty} + \Vert v\Vert_{L^4} \Vert \nabla^{-1}(u-v)\Vert_{L^4}\\
& \lesssim \Vert u-v\Vert_{L^2} \Vert \nabla^{-1} u\Vert_{H^{7/4}} + \Vert v\Vert_{H^{3/4}} \Vert \nabla^{-1}(u-v)\Vert_{H^{3/4}}\\
& \lesssim \Vert u-v\Vert_{L^2} (\Vert u\Vert_{H^{3/4}} + \Vert v\Vert_{H^{3/4}})
\end{align*}
which shows continuity of $G$ from $H^{3/4}$ to $H^{-1}$. Taking $v=0$ in the above estimates we obtain
\begin{align*}
\Vert G(u)\Vert_{H^{-1}} \lesssim \Vert u\Vert_{L^2} \Vert u\Vert_{H^{3/4}} \lesssim \Vert u\Vert_{L^2} \Vert u\Vert_{H^1} ,
\end{align*}
so that (H1) is satisfied with $\eta=1/4$, $\beta_1=1$.
\smallskip

\emph{Verification of} (H2). By the general formula
\begin{equation*}
\langle f,g\cdot\nabla f\rangle = - \frac{1}{2} \langle f^2, \nabla\cdot g\rangle,
\end{equation*}
which can be easily derived by integration by parts, it follows that
\begin{align*}
|\langle G(u),u\rangle |
=|\langle u,\nabla^{-1}u\cdot \nabla u\rangle|
= \frac{1}{2} \left\vert \int_{\mathbb{T}^3} u^3(x)\,\d x\right\vert
\lesssim \Vert u\Vert_{L^3}^3
\lesssim \Vert u\Vert_{H^{1/2}}^3
\lesssim \Vert u\Vert_{L^2}^{3/2} \Vert u\Vert_{H^1}^{3/2},
\end{align*}
where we used the Sobolev embedding $H^{1/2}(\mathbb{T}^3)\subset L^3(\mathbb{T}^3)$ and interpolation estimates. Therefore (H2) holds with $\beta_2= \gamma_2=3/2$.
\smallskip

\emph{Verification of} (H3). The estimates for (H1) also show that
\begin{align*}
|\langle G(u)-G(v),u-v\rangle|
& \leq \Vert G(u)-G(v)\Vert_{H^{-1}} \Vert u-v\Vert_{H^1}\\
& \lesssim \Vert u-v\Vert_{L^2} \Vert u-v\Vert_{H^1} (\Vert u\Vert_{H^{3/4}} + \Vert v\Vert_{H^{3/4}})\\
& \lesssim \Vert u-v\Vert_{L^2} \Vert u-v\Vert_{H^1} (\Vert u\Vert_{H^1} + \Vert v\Vert_{H^1}) ,
\end{align*}
so that (H3) holds with $\beta_3=1=\gamma_3=\kappa$.
\end{proof}
\begin{corollary}
For any $\lambda\in\mathbb{R}$ and any $R\geq 0$, hypothesis (H4) is satisfied for the choice
\[ \mathcal K_{R,\lambda} = \big\{ f\in L^2(\mathbb{T}^3): \Vert f-f_{\mathbb{T}^3}\Vert_{L^2}\leq R, f_{\mathbb{T}^3} = \lambda \big\}.
\]
\end{corollary}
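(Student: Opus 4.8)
The plan is to dispatch the geometric requirements on $\mathcal K_{R,\lambda}$ directly and then verify the global solvability and uniform $L^2$ bound in (H4) by an energy estimate, in the spirit of Remark \ref{remark intro}(iii) and the preceding lemma. The set $\mathcal K_{R,\lambda}$ is the translate $\lambda+B_R$ of the radius-$R$ ball $B_R$ in the closed subspace of zero-mean functions, so it is convex and closed; it is bounded because $\|f\|_{L^2}\le\|f-f_{\T^3}\|_{L^2}+|\lambda|\le R+|\lambda|$ for every $f\in\mathcal K_{R,\lambda}$. The real content is the data-uniform bound for the enhanced-viscosity equation \eqref{PDE enhanced viscosity}, which for Keller--Segel reads $\partial_t u=(1+\nu)\Delta u-G(u)+\lambda u$ with $G(u)=\nabla\cdot[u\nabla^{-1}u]$; I would obtain it by splitting off the mean.

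Write $u(t)=m(t)+w(t)$, where $m(t):=u_{\T^3}(t)$ is the spatial mean and $w(t)$ is the zero-mean part, so that $w(0)=u_0-\lambda$ with $\|w(0)\|_{L^2}\le R$. Using $\nabla^{-1}u=\nabla^{-1}w$ and $-\nabla\cdot\nabla^{-1}w=w$ one finds $G(u)=G(w)-m\,w$; projecting the equation onto the constants and onto the zero-mean functions then gives
\[
\dot m=\lambda m, \qquad \partial_t w=(1+\nu)\Delta w-G(w)+(\lambda+m(t))\,w .
\]
Hence $m(t)=\lambda e^{\lambda t}$, which on a fixed horizon $[0,T]$ is bounded by $M_0:=|\lambda|e^{|\lambda|T}$ independently of the data, and the coefficient $\lambda+m(t)$ stays bounded on $[0,T]$ by a constant $M=M(\lambda,T)$.

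It then remains to bound the zero-mean part $w$, for which the estimate is essentially that of Remark \ref{remark intro}(iii). Testing the $w$-equation against $w$, the bound $|\langle G(w),w\rangle|\lesssim\|w\|_{L^2}^{3/2}\|w\|_{H^1}^{3/2}$ established while checking (H2), followed by Young's inequality, absorbs the nonlinearity into a fraction of the dissipation $\|\nabla w\|_{L^2}^2$ plus a term $C\|w\|_{L^2}^6$; since $w$ is zero-mean, Poincar\'e's inequality turns the leftover dissipation into a coercive $-c\,\nu\|w\|_{L^2}^2$. Choosing $\nu$ large (depending on $T$ through $M$, and on $R$) so that this coercivity dominates the bounded linear term $2M\|w\|_{L^2}^2$, one reaches $\frac{\d}{\d t}\|w\|_{L^2}^2\le-\Lambda_\nu\|w\|_{L^2}^2+C\|w\|_{L^2}^6$ with $\Lambda_\nu\to\infty$ as $\nu\to\infty$. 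Comparison with the scalar ODE, exactly as in Remark \ref{remark intro}(iii), shows that once $\Lambda_\nu$ exceeds $CR^4$ every trajectory issued from $\|w(0)\|_{L^2}^2\le R^2$ stays (indeed decreases) below $R^2$, while integrating the dissipation yields $w\in L^2(0,T;H^1)$. Since $\|u(t)\|_{L^2}^2=m(t)^2+\|w(t)\|_{L^2}^2\le M_0^2+R^2$ uniformly over $u_0\in\mathcal K_{R,\lambda}$, the local well-posedness coming from (H1)--(H3) upgrades this a priori bound to a global solution, which is precisely (H4).

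The one genuinely new difficulty compared with the zero-mean case treated in the preceding lemma is the nonzero mean: it makes $m(t)$ grow when $\lambda>0$ and feeds the time-dependent coefficient $\lambda+m(t)$ into the oscillation equation. Both are harmless here because (H4) only requires control on a fixed finite interval $[0,T]$ and allows $\nu$ to depend on $T$, so the bounded mean contributes the fixed constant $M_0^2$ and the enhanced viscosity can be taken large enough to swallow the finite coefficient $M$. The two places I would check most carefully are the sign bookkeeping in $G(u)=G(w)-m\,w$ and the orthogonal splitting $\|u\|_{L^2}^2=m^2+\|w\|_{L^2}^2$, on which the whole reduction rests.
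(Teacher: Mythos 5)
Your proof is correct, but it takes a genuinely different route from the paper's. The paper disposes of the corollary in two lines: since $\mathcal K_{R,\lambda}$ is exactly the translate by the constant $\lambda$ of the zero-mean ball $\{f: f_{\mathbb{T}^3}=0,\ \Vert f\Vert_{L^2}\le R\}$, and since this translation is precisely the change of variables $u=\rho-\rho_{\mathbb{T}^3}$ already used to derive \eqref{keller-segel 2}, it suffices to note that $F(u)=-\nabla\cdot[u\nabla^{-1}u]+\lambda u$ preserves the space of mean-zero functions and to invoke Remark \ref{remark intro}(iii) on that ball. You instead read (H4) literally, insert data of mean $\lambda$ directly into the $u$-equation, and decouple the dynamics into the linear ODE $\dot m=\lambda m$ for the mean and a perturbed zero-mean equation for the oscillation $w$ carrying the bounded time-dependent coefficient $\lambda+m(t)$, which is then swallowed by taking $\nu$ large on the fixed horizon $[0,T]$. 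Your bookkeeping is sound: $\nabla^{-1}u=\nabla^{-1}w$ together with $-\nabla\cdot\nabla^{-1}w=w$ does give $G(u)=G(w)-mw$, the means of $\Delta u$ and of the divergence term vanish so the projections are as you state, and $\Vert u\Vert_{L^2}^2=m^2+\Vert w\Vert_{L^2}^2$ by orthogonality of constants and mean-zero functions; the energy estimate on $w$ is then the same Young--Poincar\'e--comparison argument as in Remark \ref{remark intro}(iii). What the paper's route buys is brevity (no new computation at all, exploiting that the set $\mathcal K_{R,\lambda}$ was designed as the image of the zero-mean ball under the recentring); what yours buys is an explicit, self-contained treatment of the equation with non-recentred data, making visible the (harmless on $[0,T]$) exponential evolution of the mean. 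Either reading establishes the corollary.
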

\begin{proof}
If $\rho$ is a solution to \eqref{keller-segel} belonging to $\mathcal K_{R,\lambda}$, then $u=\rho-\rho_{\mathbb{T}^3}$ is a solution to \eqref{keller-segel} with $u_0$ being a mean zero function to \eqref{PDE} with $F(u)=-\nabla\cdot[u \nabla^{-1} u] + \lambda u$, $\Vert u_0\Vert_{L^2}\leq R$. The conclusion then follows from the fact that $F$ satisfies (H1) and point {\rm (iii)} of Remark \ref{remark intro}.
\end{proof}

\subsection{The Fisher--KPP equation}

Here $\alpha=1$, $F(u)=u^2-u$. As in the last section, we will consider only the 3D case. Observe that assumptions (H1)--(H3) are trivially satisfied by the linear term $-u$, therefore it is enough to verify them for the nonlinearity $G(u)=u^2$; this is the content of the next lemma.

\begin{lemma}
The hypotheses (H1)--(H3) hold for the nonlinearity $G(u)= u^2$.
\end{lemma}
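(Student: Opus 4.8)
I need to verify (H1), (H2), (H3) for $G(u)=u^2$ in dimension $d=3$ with $\alpha=1$. The overarching strategy is exactly as in the Keller--Segel lemma: estimate the relevant $L^2$- and $H^{-1}$-norms of $G$ by interpolation, reducing everything to products of $\|u\|_{L^2}$ and $\|u\|_{H^1}$ via Sobolev embeddings on $\mathbb{T}^3$. The key analytic inputs are the embedding $H^s(\mathbb{T}^3)\subset L^p(\mathbb{T}^3)$ (with $s=3/4$, $p=4$ and $s=1/2$, $p=3$) and the interpolation inequality $\|u\|_{H^s}\lesssim \|u\|_{L^2}^{1-s}\|u\|_{H^1}^{s}$. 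No maximum principle or structural cancellation is needed, since $G$ here is a genuinely quadratic map.

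\emph{Verification of} (H1). First I would write $G(u)-G(v)=u^2-v^2=(u+v)(u-v)$, and estimate $\|G(u)-G(v)\|_{H^{-1}}\leq \|u^2-v^2\|_{L^2}$ using the trivial bound $\|\cdot\|_{H^{-1}}\leq\|\cdot\|_{L^2}$. By H\"older's inequality with exponents $(4,4)$,
\[
\|(u+v)(u-v)\|_{L^2}\leq \|u+v\|_{L^4}\,\|u-v\|_{L^4}\lesssim (\|u\|_{H^{3/4}}+\|v\|_{H^{3/4}})\,\|u-v\|_{H^{3/4}},
\]
which gives continuity of $G$ from $H^{3/4}$ into $H^{-1}$. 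Taking $v=0$ yields $\|G(u)\|_{H^{-1}}\lesssim \|u\|_{L^4}^2\lesssim \|u\|_{H^{3/4}}^2\lesssim \|u\|_{L^2}^{1/2}\|u\|_{H^1}^{3/2}$. To match the form required in (H1) I absorb the extra $H^1$-power into $\|u\|_{L^2}^{\beta_1}(1+\|u\|_{H^1})$; here the exponent $\eta=1/4$ (so that $H^{\alpha-\eta}=H^{3/4}$) and $\beta_1$ can be read off after using $\|u\|_{H^1}^{3/2}\lesssim (1+\|u\|_{H^1})\|u\|_{H^1}^{1/2}$ together with interpolation, so that the statement $\|G(u)\|_{H^{-1}}\lesssim (1+\|u\|_{L^2}^{\beta_1})(1+\|u\|_{H^1})$ holds.

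\emph{Verification of} (H2) \emph{and} (H3). For (H2), $\langle G(u),u\rangle=\int_{\mathbb{T}^3}u^3\,\d x$, so $|\langle G(u),u\rangle|\leq \|u\|_{L^3}^3\lesssim \|u\|_{H^{1/2}}^3\lesssim \|u\|_{L^2}^{3/2}\|u\|_{H^1}^{3/2}$, exactly as in the Keller--Segel computation, giving $\beta_2=\gamma_2=3/2<2$. For (H3), the (H1) estimates reused via duality give
\[
|\langle G(u)-G(v),u-v\rangle|\leq \|G(u)-G(v)\|_{H^{-1}}\,\|u-v\|_{H^1}\lesssim \|u-v\|_{L^2}\,\|u-v\|_{H^1}\big(\|u\|_{H^1}+\|v\|_{H^1}\big),
\]
where I first bound $\|u-v\|_{H^{3/4}}\lesssim\|u-v\|_{L^2}^{1/4}\|u-v\|_{H^1}^{3/4}$ and $\|u+v\|_{H^{3/4}}\lesssim\|u+v\|_{H^1}$; a little rebalancing of the interpolation exponents is needed to land on admissible $(\beta_3,\gamma_3,\kappa)$ with $\gamma_3\in(0,2)$, $\beta_3+\gamma_3\geq 2$, $\gamma_3+\kappa\leq 2$, and the natural choice $\beta_3=\gamma_3=\kappa=1$ (as in Keller--Segel) works.

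\emph{Main obstacle.} None of the steps is deep; the only point requiring care is the exact bookkeeping of interpolation exponents so that the final inequalities fit the prescribed algebraic constraints in the hypotheses (especially the simultaneous requirements $\beta_3+\gamma_3\geq2$ and $\gamma_3+\kappa\leq2$ in (H3)). I expect the verification to parallel the Keller--Segel case almost verbatim, the only genuine difference being that here the nonlinearity is a pointwise product rather than $\nabla\cdot[u\nabla^{-1}u]$, which if anything makes the estimates more elementary since no $\nabla^{-1}$ smoothing is involved.
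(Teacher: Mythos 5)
There is a genuine gap, and it occurs twice for the same underlying reason: bounding $\|u^2-v^2\|_{H^{-1}}$ by $\|u^2-v^2\|_{L^2}$ throws away the smoothing of the negative Sobolev norm, and in $d=3$ this loss is fatal for the exponent bookkeeping. For (H1), your route gives $\|G(u)\|_{H^{-1}}\leq\|u\|_{L^4}^2\lesssim\|u\|_{H^{3/4}}^2\lesssim\|u\|_{L^2}^{1/2}\|u\|_{H^1}^{3/2}$, but (H1) demands at most \emph{linear} growth in $\|u\|_{H^\alpha}$. The absorption you propose, $\|u\|_{H^1}^{3/2}\lesssim(1+\|u\|_{H^1})\|u\|_{H^1}^{1/2}$, leaves a residual factor $\|u\|_{H^1}^{1/2}$ that cannot be traded for powers of $\|u\|_{L^2}$: taking $u$ with fixed $L^2$ norm and arbitrarily large $H^1$ norm shows $\|u\|_{L^2}^{1/2}\|u\|_{H^1}^{3/2}\not\lesssim(1+\|u\|_{L^2}^{\beta_1})(1+\|u\|_{H^1})$ for any $\beta_1$. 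For (H3) the same issue reappears: pairing $\|G(u)-G(v)\|_{H^{-1}}$ with $\|u-v\|_{H^1}$ and using your $H^{3/4}$-based bound yields $\|u-v\|_{L^2}^{1/4}\|u-v\|_{H^1}^{7/4}(\|u\|_{H^1}+\|v\|_{H^1})$, i.e.\ $\gamma_3=7/4$, $\kappa=1$, so $\gamma_3+\kappa=11/4>2$. The claimed ``rebalancing'' to $\beta_3=\gamma_3=\kappa=1$ is not available from your estimate, because interpolation only converts $H^1$ control into weaker norms at fixed total derivative count: your decomposition places $3/4+1=7/4$ derivatives on $u-v$, and no rearrangement reduces that.

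The fix is the paper's trilinear estimate, which distributes the derivatives evenly over all three factors: for $\phi\in H^{1/2}$,
\[
|\langle u^2-v^2,\phi\rangle|\leq\|u+v\|_{L^3}\|u-v\|_{L^3}\|\phi\|_{L^3}\lesssim\|u+v\|_{H^{1/2}}\|u-v\|_{H^{1/2}}\|\phi\|_{H^{1/2}},
\]
using $H^{1/2}(\mathbb{T}^3)\subset L^3(\mathbb{T}^3)$. This gives $\|G(u)-G(v)\|_{H^{-1/2}}\lesssim(\|u\|_{H^{1/2}}+\|v\|_{H^{1/2}})\|u-v\|_{H^{1/2}}$, hence $\|G(u)\|_{H^{-1}}\lesssim\|u\|_{H^{1/2}}^2\lesssim\|u\|_{L^2}\|u\|_{H^1}$ (linear in $H^1$, so $\beta_1=1$, $\eta=1/2$), and for (H3) one pairs through the $H^{-1/2}/H^{1/2}$ duality rather than $H^{-1}/H^1$, obtaining $\|u-v\|_{H^{1/2}}^2(\|u\|_{H^{1/2}}+\|v\|_{H^{1/2}})\lesssim\|u-v\|_{L^2}\|u-v\|_{H^1}(\|u\|_{H^1}+\|v\|_{H^1})$, which is exactly $\beta_3=\gamma_3=\kappa=1$. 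Your verification of (H2) coincides with the paper's and is correct.
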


\begin{proof}
\emph{Verification of} (H1). We first prove the continuity of $G$.
For any $\phi \in H^1(\T^3)$, by the H\"older inequality and the Sobolev embedding $H^{1/2}(\T^3) \subset L^{3}(\T^3)$, it holds
$$|\<u^2 - v^2, \phi\>|
\leq \|u+v\|_{L^3} \|u-v\|_{L^3} \|\phi \|_{L^3}
\lesssim \|u +v\|_{H^{1/2}} \|u-v\|_{H^{1/2}} \|\phi \|_{H^{1/2}}.$$
As a consequence we deduce
\begin{equation}\label{lem-FKPP-1}
\|G(u) - G(v)\|_{H^{-1}} \leq \|G(u) - G(v)\|_{H^{-1/2}}
\lesssim  (\|u \|_{H^{1/2}} + \|v \|_{H^{1/2}}) \|u-v\|_{H^{1/2}},
\end{equation}
which implies continuity of $G:H^{1/2}\to H^{-1}$. Taking $v=0$ in the above estimate and using interpolation inequalities  we also obtain
$$
\Vert G(u)\Vert_{H^{-1}}
\lesssim \Vert u\Vert_{H^{1/2}}^2
\lesssim \Vert u\Vert_{H^1} \Vert u\Vert_{L^2},
$$
so that (H1) is satisfied with $\eta=1/2,\, \beta_1=1$. \smallskip

\emph{Verification of} (H2). By the Sobolev embedding and interpolation estimates, we have
$$|\<G(u), u\>| = |\<u^2, u\>|
\leq \|u\|_{L^3}^3 \lesssim  \|u\|_{H^{1/2}}^3
\lesssim \|u\|_{L^2}^{3/2} \|u\|_{H^1}^{3/2}. $$
Therefore (H2) holds with $\beta_2=3/2$ and $\gamma_2=3/2$. \smallskip

\emph{Verification of} (H3). By the second inequality in \eqref{lem-FKPP-1} and the interpolation inequality,
\begin{align*}
|\<G(u)-G(v), u-v \>|
&\leq \|G(u)-G(v) \|_{H^{-1/2}} \|u-v \|_{H^{1/2}} \\
& \lesssim \|u-v \|_{H^{1/2}}^2 (\|u \|_{H^{1/2}} + \|v \|_{H^{1/2}}) \\
& \lesssim \Vert u-v\Vert_{H^1} \Vert u-v\Vert_{L^2} (\|u\|_{H^{1}}+\|v \|_{H^{1}}) ,
\end{align*}
which shows that (H3) holds for $\beta_3=\gamma_3= \kappa= 1$.
\end{proof}

The rest of this subsection is devoted to the proof that hypothesis (H4) holds for nonlinearity $F(u)= u^2- u$ with suitable $\mathcal K \subset L^2(\mathbb{T}^3)$. Recall the Poincar\'{e} inequality:
\[
\left\Vert v-\int_{\mathbb{T}^3}v(x)\,\d x \right\Vert _{L^{2}}^{2}
\leq (2\pi)^{-2} \left\Vert \nabla v\right\Vert _{L^{2}}^{2} \quad \forall\, v\in H^1(\mathbb{T}^3).
\]

\begin{proposition}\label{prop global large viscosity}
Fix $m_0 <1$ and $\sigma_0\in [0,+\infty)$. Then there exists $\nu=\nu(m_0,\sigma_0)$ big enough such that, for any initial data $u_0\in L^2(\mathbb{T}^3)$ satisfying
\[
\int_{\mathbb{T}^{3}}u_{0}(  x)\, \d x\leq m_0<1, \quad
\left\Vert u_{0}-\int_{\mathbb{T}^{3}}u_{0}(x)\, \d x\right\Vert _{L^{2}} \leq \sqrt{\sigma_0},
\]
the associated Cauchy problem
\begin{equation*} \begin{cases}
\partial_t u = (1+\nu)\Delta u +u^2 -u\\ u|_{t=0}=u_0
\end{cases} \end{equation*}
admits a global solution $u\in C([0,+\infty);L^2(\mathbb{T}^3))$; moreover, such solution satisfies
\begin{equation*}
\int_{\mathbb{T}^{3}}u(  t,x) \, \d x\leq1, \quad
\left\Vert u(  t,\cdot)  -\int_{\mathbb{T}^{3}}u(  t,x)\, \d x\right\Vert _{L^{2}}\leq \sqrt{\sigma_{0}},\quad
\left\Vert u(  t)  \right\Vert _{L^{2}}\leq 1+ \sqrt{\sigma_{0}}
\quad \forall\, t\geq 0.
\end{equation*}
\end{proposition}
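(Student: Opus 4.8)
The plan is to reduce the scalar PDE to a coupled system for the spatial mean and the mean-zero fluctuation, and then to run a continuity (bootstrap) argument in the resulting two-dimensional phase space. Write $u=m+w$, where $m(t)=\int_{\T^3}u(t,x)\,\d x$ is the spatially constant mean and $w=u-m$ has zero mean, and set $\sigma(t)=\Vert w(t)\Vert_{L^2}^2$. Since $\int_{\T^3}u^2\,\d x=m^2+\sigma$, integrating the equation over $\T^3$ gives the scalar ODE $\dot m=m(m-1)+\sigma$, while projecting onto mean-zero functions yields $\partial_t w=(1+\nu)\Delta w+(2m-1)w+w^2-\sigma$. The $L^2$ energy balance for $w$ then reads $\tfrac12\dot\sigma=-(1+\nu)\Vert\nabla w\Vert_{L^2}^2+(2m-1)\sigma+\int_{\T^3}w^3\,\d x$, where the constant term $-\sigma$ drops out because $\int_{\T^3}w\,\d x=0$.

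Next I would control the cubic term exactly as in the verification of (H2)--(H3): by the Sobolev embedding $H^{1/2}(\T^3)\subset L^3(\T^3)$ together with the interpolation $\Vert w\Vert_{H^{1/2}}\lesssim\Vert w\Vert_{L^2}^{1/2}\Vert w\Vert_{H^1}^{1/2}$ and Poincar\'e's inequality (so that $\Vert w\Vert_{H^1}\lesssim\Vert\nabla w\Vert_{L^2}$ for mean-zero $w$), one obtains $\big|\int_{\T^3}w^3\,\d x\big|\le\Vert w\Vert_{L^3}^3\lesssim\Vert w\Vert_{L^2}^{3/2}\Vert\nabla w\Vert_{L^2}^{3/2}$. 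Young's inequality then absorbs a factor $\Vert\nabla w\Vert_{L^2}^2$ into the dissipation at the cost of a term $C\sigma^3$, and using $\Vert\nabla w\Vert_{L^2}^2\ge 4\pi^2\sigma$ I arrive at the differential inequality $\dot\sigma\le 2\big(-4\pi^2\nu+2m-1+C\sigma^2\big)\sigma$. Consequently, as long as $m\le 1$ and $\sigma\le\sigma_0$, choosing $\nu$ so large that $\lambda:=4\pi^2\nu-1-C\sigma_0^2>0$ forces $\dot\sigma\le-2\lambda\sigma$, hence the exponential decay $\sigma(t)\le\sigma_0 e^{-2\lambda t}$ and in particular $\int_0^\infty\sigma\,\d t\le\sigma_0/(2\lambda)$, which tends to $0$ as $\nu\to\infty$.

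The core of the argument is the continuity step. Local existence and uniqueness in $C([0,T);L^2)$ follow from (H1)--(H3) (Remark \ref{remark intro}(i)), so I define $T^*$ to be the first time the box $\{m\le 1,\ \sigma\le\sigma_0\}$ is exited and aim to show $T^*=\infty$. On $[0,T^*]$ the decay estimate above already gives $\sigma(T^*)<\sigma_0$, so the $\sigma$-constraint is never the one that fails. The delicate point is the mean: on the face $m=1$ one has $\dot m=\sigma\ge 0$, so the box is \emph{not} invariant in the $m$-direction on its own, and one must instead exploit that the total forcing $\int\sigma\,\d t$ is small. Setting $p=1-m\ge 0$, the ODE becomes $\dot p=p(1-p)-\sigma$, so $\dot p\ge-\sigma$ whenever $m\in[0,1]$; a barrier argument that tracks the last time $p$ equals $1$ (or the initial time) then yields $p(t)\ge(1-m_0)-\sigma_0/(2\lambda)>0$ for $\nu$ large, using $m_0<1$. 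Thus $m(T^*)<1$ strictly as well, contradicting the definition of $T^*$, whence $T^*=\infty$ and both constraints persist for all time.

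Finally, the uniform bounds deliver global existence and the stated estimates. The mean obeys $m(t)\le 1$; the matching lower bound $m(t)\ge\min(m(0),0)$ comes for free, since $\dot m=m(m-1)+\sigma>0$ whenever $m<0$, so $m$ cannot drift further below its initial value. Hence $\Vert u(t)\Vert_{L^2}^2=m(t)^2+\sigma(t)\le 1+\sigma_0\le(1+\sqrt{\sigma_0})^2$, giving $\Vert u(t)\Vert_{L^2}\le 1+\sqrt{\sigma_0}$, and this uniform-in-time $L^2$ bound rules out finite-time blow-up and upgrades the local solution to one in $C([0,\infty);L^2)$. I expect the main obstacle to be precisely the non-invariance of $\{m\le 1\}$ highlighted above: the proof keeps the mean below $1$ not via a one-sided sign condition on the boundary, but only through the quantitative smallness of $\int_0^\infty\sigma\,\d t$, so the estimates for $\sigma$ and $m$ must be closed simultaneously rather than in sequence.
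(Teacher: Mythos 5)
Your proposal is correct and follows essentially the same route as the paper: decompose $u$ into its mean plus a mean-zero fluctuation, derive the coupled system of differential inequalities for $(m,\sigma)$ with the cubic term controlled via $H^{1/2}(\T^3)\subset L^3(\T^3)$, interpolation and Young, and close a continuity argument using that $\int_0^\infty\sigma\,\d t\lesssim\sigma_0/\lambda_\nu$ is small for $\nu$ large. The only cosmetic difference is that you keep the mean below $1$ via a single barrier argument for $p=1-m$ (tracking the last crossing of $p=1$), whereas the paper splits into the cases $m(0)\geq 0$ and $m(0)<0$; both hinge on exactly the same quantitative smallness condition on $\sigma_0/\lambda_\nu$ relative to $1-m_0$.
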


\begin{proof}
\textbf{Step 1} (preliminary computations). For simplicity from now on we will write $\nu$ in place of $1+\nu$; denote the spatial average of $u$ by $u_{\mathbb{T}^3}$. Integrating $\partial_t u= \nu \Delta u + u^2 -u$ over $\mathbb{T}^3$ yields
\begin{align*}
\partial_{t}u_{\mathbb{T}^3}  &  =\nu \Delta u_{\mathbb{T}^3}%
+\|u\|_{L^2}^2 -u_{\mathbb{T}^3}
= \|u\|_{L^2}^2 -u_{\mathbb{T}%
^3}^{2}+u_{\mathbb{T}^3}^{2}-u_{\mathbb{T}^3}
= \left\Vert u-u_{\mathbb{T}^3}\right\Vert
_{L^{2}}^{2}+u_{\mathbb{T}^3}^{2}-u_{\mathbb{T}^3}.
\end{align*}
Hence, by the first equality,
\[
\partial_{t} (  u-u_{\mathbb{T}^3} )
=\nu\Delta ( u-u_{\mathbb{T}^3} )  + \left(  u^{2}-\|u\|_{L^2}^2 \right)  -\left(  u-u_{\mathbb{T}^3}\right)
\]
which implies
\begin{align*}
\frac{\d}{\d t}\left\Vert u-u_{\mathbb{T}^3}\right\Vert _{L^{2}}^{2}  &
= -2\nu\left\Vert \nabla\left(  u-u_{\mathbb{T}^3}\right)  \right\Vert
_{L^{2}}^{2}-2\left\Vert u-u_{\mathbb{T}^3}\right\Vert _{L^{2}}^{2}%
+2\int_{\mathbb{T}^3}\left(  u^{2}- \|u\|_{L^2}^2\right)
\left(  u-u_{\mathbb{T}^3}\right) \d x\\
&  \leq -2\nu\left\Vert \nabla u\right\Vert _{L^{2}}^{2}
+2\int_{\mathbb{T}^3}u^{2}\left(  u-u_{\mathbb{T}^3}\right)  \d x.
\end{align*}
We have
\begin{equation}\label{prop global large viscosity.1}
\aligned
\int_{\mathbb{T}^3}u^{2} (  u-u_{\mathbb{T}^3})\,\d x  &
=\int_{\mathbb{T}^3}(  u-u_{\mathbb{T}^3})^3 \,\d x+2\int_{\mathbb{T}^3}uu_{\mathbb{T}^3 } (u-u_{\mathbb{T}^3} ) \,\d x -u_{\mathbb{T}^3}^{2} \int_{\mathbb{T}^3} (u-u_{\mathbb{T}^3} ) \,\d x\\
& =\int_{\mathbb{T}^3} ( u-u_{\mathbb{T}^3} )^3 \,\d x +2u_{\mathbb{T}^3}\int_{\mathbb{T}^3}u (  u-u_{\mathbb{T}^3} ) \,\d x\\
& =\int_{\mathbb{T}^3} ( u-u_{\mathbb{T}^3} )^3\, \d x +2u_{\mathbb{T}^3}\int_{\mathbb{T}^3} (u-u_{\mathbb{T}^3} )^2\,\d x.
\endaligned
\end{equation}
Using the Sobolev embedding inequality and the interpolation inequality leads to
\begin{align*}
\bigg|\int_{\mathbb{T}^3} ( u-u_{\mathbb{T}^3} )^3\, \d x\bigg|
&\leq \|u-u_{\mathbb{T}^3} \|_{L^3}^3
\lesssim \|u-u_{\mathbb{T}^3} \|_{H^{1/2}}^3
\lesssim \|u-u_{\mathbb{T}^3} \|_{L^2}^{3/2} \|u-u_{\mathbb{T}^3} \|_{H^1}^{3/2} \\
&\leq C \|u-u_{\mathbb{T}^3} \|_{L^2}^6 + \|\nabla u \|_{L^2}^2.
\end{align*}
%
Combining this estimate with \eqref{prop global large viscosity.1} yields
\begin{align*}
\int_{\mathbb{T}^3}u^{2} (u-u_{\mathbb{T}^3}) \,\d x
& \leq \Vert \nabla u \Vert _{L^{2}}^{2} + C \Vert u-u_{\mathbb{T}^3} \Vert _{L^{2}}^{6}  +2u_{\mathbb{T}^3} \Vert u-u_{\mathbb{T}^3} \Vert _{L^{2}}^{2},
\end{align*}
and so overall applying Poincar\'e inequality we obtain
\begin{align*}
\frac{\d}{\d t}\left\Vert u-u_{\mathbb{T}^3}\right\Vert _{L^{2}}^{2}
& \leq -2\nu\left\Vert \nabla u\right\Vert _{L^{2}}^{2} +2\int_{\mathbb{T}^3}u^{2}\left(  u-u_{\mathbb{T}^3}\right)  \d x\\
& \leq -2(\nu-1) \Vert \nabla u\Vert_{L^{2}}^2 + C \Vert u-u_{\mathbb{T}^3} \Vert _{L^{2}}^{6}  +4 u_{\mathbb{T}^3} \Vert u-u_{\mathbb{T}^3} \Vert _{L^{2}}^{2}\\
& \leq (-\lambda_\nu + 4u_{\mathbb{T}^3}) \Vert u-u_{\mathbb{T}^3}\Vert_{L^2}^2 + C \Vert u-u_{\mathbb{T}^3} \Vert _{L^{2}}^{6}
\end{align*}
for the choice $\lambda_\nu = 8\pi^2 (\nu-1)$. Remark that the constant $C>0$ does not depend on $\nu$.

\textbf{Step 2} (global solutions). Summarizing the results of Step 1, we have
the system
\begin{align*}
\frac{\d}{\d t}u_{\mathbb{T}^3} &  =\left\Vert u-u_{\mathbb{T}^3}\right\Vert
_{L^{2}}^{2}+u_{\mathbb{T}^3}^{2}-u_{\mathbb{T}^3}, \\
\frac{\d}{\d t}\left\Vert u-u_{\mathbb{T}^3}\right\Vert _{L^{2}}^{2} &
\leq\left(  -\lambda_\nu + 4u_{\mathbb{T}^3}\right)  \left\Vert
u-u_{\mathbb{T}^3}\right\Vert _{L^{2}}^{2}+C\left\Vert u-u_{\mathbb{T}^3%
}\right\Vert _{L^{2}}^{6}.
\end{align*}
Setting $x(t)=u_{\mathbb{T}^3}(t)$, $y(t)  =\Vert u( t)  -u_{\mathbb{T}^3}(t) \Vert _{L^{2}}^{2}$, and writing $\lambda=\lambda_\nu$ for simplicity, we obtain a system of differential inequalities of the form
\begin{align*}\begin{cases}
x^{\prime} =y+x^{2}-x,\\
y^{\prime} \leq (  -\lambda+4x )  y+C y^3,\\
x( 0)   \leq m_{0}<1,\\
y( 0)  \leq \sigma_{0}
\end{cases}\end{align*}
with the additional information that $y(t)\geq 0$ for all $t\geq 0$. Our aim is to find $\lambda>0$ (equivalently $\nu>0$) big enough such that for any pair $(x(t),y(t))$ satisfying the above system it holds
\begin{equation*}
\sup_{t\geq 0} x(t) \leq 1,\quad \sup_{t\geq 0} y(t) \leq \sigma_0.
\end{equation*}
We can always take $m_0\in (0,1)$ and divide the proof in two cases.

\textbf{Case I.}
Assume $x(0)\in [0,m_0]$; observe that since $x'\geq x^2-x$, by comparison it must hold $x(t)\geq 0$ for all $t$. For any $\eps\in (0,1)$ such that $m_0+\varepsilon<1$, define
\begin{equation*}
T_\eps=\inf\big\{t\geq 0 : (x(t),y(t))\notin [0,m_0+\varepsilon)\times [0,\sigma_0+\eps)\big\}
\end{equation*}
with the convention $\inf\emptyset=+\infty$. Since $0\leq x(t)\leq 1$ for $t\in [0,T_\eps]$, $x^2(t)-x(t)\leq 0$; therefore
\begin{equation*}\begin{cases}
x'(t)\leq y(t), \\
y'(t) \leq \big[-\lambda + 4 + C(\sigma_0+1)^2 \big]\, y(t),
\end{cases}
\quad\, \forall\, t\in [0,T_\eps].\end{equation*}
Set $\gamma:= \lambda - 4 - C(\sigma_0+1)^2$ and assume $\lambda$ is big enough so that $\gamma>0$. Then, an application of Gronwall's inequality yields the estimates
\begin{align*}
y(t)\leq e^{-\gamma t} y(0)\leq \sigma_0,\quad
x(t)\leq m_0 + \int_0^t y(s)\, \d s
< m_0 + \frac{\sigma_0}{\gamma}\quad \forall\, t\in [0,T_\eps].
\end{align*}
The conclusion then follows if we can choose $\gamma$ such that $\sigma_0 \leq \eps \gamma$, as this implies $T_\eps=+\infty$. But this is equivalent to choosing small $\eps>0$ and large $\lambda>0$ such that
\begin{equation*}
\sigma_0\leq \eps \big[\lambda - 4 - C(\sigma_0+1)^2 \big],
\end{equation*}
which is always possible, for instance choosing $\eps= (1-m_0)/2$ and $\lambda=\lambda(m_0,\sigma_0)$ such that
\begin{equation*}
\lambda \geq 4+ C(\sigma_0+1)^2 + \frac{2\sigma_0}{1-m_0}.
\end{equation*}

\textbf{Case II.} Suppose now $x(0)<0$ and keep the same choice of $\eps, \lambda$ as above. Define
\begin{equation*}
\tau_\eps=\inf\big\{t\geq 0 : (x(t),y(t))\notin (-\infty,0)\times [0,\sigma_0+\eps)\big\},
\end{equation*}
then for $t\leq \tau^\eps$ it holds
\begin{equation*}
x'(t) \geq x^2(t) - x(t) \geq 0,\quad y'(t) \leq (-\lambda + 4x(t))y(t) + C y(t)^3\leq \big[-\lambda+C(\sigma_0+1)^2 \big] y(t).
\end{equation*}
By our choice of $\lambda$ it holds $-\lambda+C(\sigma_0+1)^2< 0$, thus
  $$y(t) < y(0) \leq \sigma_0\qquad \forall\,t\in [0,\tau^\eps). $$
As a consequence, $(x(t),y(t))\in [x(0),0]\times [0,\sigma_0]$ for all $t\in [0,\tau^\eps)$. Either $\tau^\eps=+\infty$, which implies the conclusion, or $\tau^\eps<+\infty$, in which case $(x(\tau^\eps),y(\tau^\eps))\in \{0\}\times [0,\sigma_0]$ and restarting the system at $(x(\tau^\eps),(y(\tau^\eps))$ we reduce to Case I.
\end{proof}

As a consequence we immediately deduce the following.
\begin{corollary}
For any $m_0<1$, $\sigma_0<\infty$, hypothesis (H4) is satisfied for the choice
\[
\mathcal K_{m_0,\sigma_0} = \big\{f\in L^2(\mathbb{T}^3): f_{\mathbb{T}^3}\leq m_0, \Vert f-f_{\mathbb{T}^3}\Vert_{L^2} \leq \sqrt{\sigma_0} \big\}.
\]
\end{corollary}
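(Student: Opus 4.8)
The plan is to obtain the Corollary as a direct translation of Proposition~\ref{prop global large viscosity} into the language of hypothesis (H4): all the analytic content is already contained in that proposition, so the proof reduces to checking the three structural requirements on $\mathcal K_{m_0,\sigma_0}$ (convexity, closedness, boundedness) and then matching the proposition's conclusions to the two clauses of (H4), namely global solvability in $L^2(0,T;H^1)\cap C([0,T];L^2)$ together with the uniform bound \eqref{unif bound enhanced viscosity}.

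For the structural properties I would use that $f\mapsto f_{\mathbb{T}^3}$ is a continuous linear functional on $L^2(\mathbb{T}^3)$ and that $f\mapsto \Vert f-f_{\mathbb{T}^3}\Vert_{L^2}$ is a continuous seminorm, being the $L^2$ norm composed with the orthogonal projection onto the mean-zero subspace. Hence $\{f:f_{\mathbb{T}^3}\leq m_0\}$ is a closed half-space and $\{f:\Vert f-f_{\mathbb{T}^3}\Vert_{L^2}\leq\sqrt{\sigma_0}\}$ is a closed convex set, so their intersection $\mathcal K_{m_0,\sigma_0}$ is convex and closed. For the analytic clause I fix $T>0$ and take $\nu=\nu(m_0,\sigma_0)$ as supplied by the proposition; observe that this $\nu$ is in fact independent of $T$, since the proposition yields global-in-time estimates, so a single choice serves every horizon. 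Every $u_0\in\mathcal K_{m_0,\sigma_0}$ satisfies the hypotheses of the proposition verbatim (with $\alpha=1$, $F(u)=u^2-u$), whence \eqref{PDE enhanced viscosity} admits a global solution $u\in C([0,+\infty);L^2)$; integrating in time the energy inequality established in Step~1 of that proof additionally yields $u\in L^2(0,T;H^1)$, which is the remaining piece of the required regularity.

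The one clause that is not purely formal is the uniform bound. Here I would read off the pointwise conclusions of Proposition~\ref{prop global large viscosity}, namely $u_{\mathbb{T}^3}(t)\leq 1$ and $\Vert u(t)-u_{\mathbb{T}^3}(t)\Vert_{L^2}\leq\sqrt{\sigma_0}$ for all $t\geq 0$, which together give $\Vert u(t)\Vert_{L^2}\leq 1+\sqrt{\sigma_0}$ uniformly in $t$ and in $u_0\in\mathcal K_{m_0,\sigma_0}$, thereby establishing \eqref{unif bound enhanced viscosity}; boundedness of $\mathcal K_{m_0,\sigma_0}$ itself is recorded via the orthogonal splitting $\Vert f\Vert_{L^2}^2=|f_{\mathbb{T}^3}|^2+\Vert f-f_{\mathbb{T}^3}\Vert_{L^2}^2$ controlled through the two defining constraints. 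I expect no genuine obstacle to remain within the Corollary: the substantive difficulty, namely the coupled analysis of the mean ODE $x'=y+x^2-x$ and the fluctuation inequality, and the case distinction that keeps the mean in a bounded range along the flow, has been entirely absorbed into Proposition~\ref{prop global large viscosity}, and the role of the present statement is only to package that result as a verification of (H4).
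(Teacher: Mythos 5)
Your overall route is exactly the paper's: the corollary is stated there as an immediate consequence of Proposition \ref{prop global large viscosity} with no further argument, and your packaging of the proposition's conclusions into the two clauses of (H4) is the intended reading.

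However, one of your explicit claims is a genuine gap: you assert that boundedness of $\mathcal K_{m_0,\sigma_0}$ follows from the orthogonal splitting $\Vert f\Vert_{L^2}^2=|f_{\mathbb{T}^3}|^2+\Vert f-f_{\mathbb{T}^3}\Vert_{L^2}^2$ ``controlled through the two defining constraints.'' But the defining constraint on the mean is one-sided, $f_{\mathbb{T}^3}\leq m_0$, so $|f_{\mathbb{T}^3}|$ is \emph{not} controlled: the set contains functions of arbitrarily negative mean and is therefore unbounded in $L^2(\mathbb{T}^3)$. Your own splitting identity exhibits this. This matters beyond bookkeeping, since (H4) explicitly requires $\mathcal K$ bounded, and the proof of Theorem \ref{Thm main} uses convexity, closedness and boundedness to extract a weakly convergent subsequence from $\mathcal K$. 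The fix is to add a lower bound on the mean (or intersect with an $L^2$-ball), say $\mathcal K_{m_0,m_1,\sigma_0}=\{f: -m_1\leq f_{\mathbb{T}^3}\leq m_0,\ \Vert f-f_{\mathbb{T}^3}\Vert_{L^2}\leq\sqrt{\sigma_0}\}$; the proposition's Case II already handles negative initial means, and along the flow the mean stays in $[x(0),1]$, so on such a truncated set one still gets the uniform bound $\sup_t\Vert u(t)\Vert_{L^2}\leq \max(1,m_1)+\sqrt{\sigma_0}$ (note that the proposition's stated bound $1+\sqrt{\sigma_0}$ likewise tacitly assumes the mean is not too negative). With that amendment the remainder of your argument -- convexity and closedness from the linear functional and the seminorm, $T$-independence of $\nu$, the $L^2(0,T;H^1)$ regularity from integrating the Step 1 energy inequality, and the uniform bound -- is correct and complete.
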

We conclude this section with the following trivial fact compared to the previous ones, but we state it as
a result, to collect all relevant facts in explicit statements.

\begin{proposition}
For any nonnegative initial condition $u_{0}\in L^{2}\left(  \mathbb{T}^{3}\right)  $ with
\[
\int_{\mathbb{T}^{3}}u_{0} ( x)\, \d x>1,
\]
independently of $\nu>0$, the solution to the deterministic equation
  $$\partial_t u = \nu\Delta u + u^2 -u$$
blows up in $L^2\left(  \mathbb{T}^{3}\right)$.
\end{proposition}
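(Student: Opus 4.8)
The plan is to track the spatial mean of the solution and show it obeys a Riccati-type differential inequality that forces finite-time blow-up, irrespective of $\nu$. Write $m(t) := \int_{\mathbb{T}^{3}} u(t,x)\,\d x$, which is well defined on the maximal interval of $L^{2}$-existence. Integrating the equation $\partial_t u = \nu\Delta u + u^2 - u$ over $\mathbb{T}^{3}$, the dissipative term drops out since $\int_{\mathbb{T}^3}\Delta u\,\d x = 0$, leaving
\[
m'(t) = \|u(t)\|_{L^2}^2 - m(t).
\]
As the torus has unit volume, Cauchy--Schwarz (equivalently Jensen's inequality) gives $\|u(t)\|_{L^2}^2 \geq \big(\int_{\mathbb{T}^3} u\,\d x\big)^2 = m(t)^2$, whence
\[
m'(t) \geq m(t)^2 - m(t) = m(t)\,(m(t)-1).
\]
Note this uses nothing about $\nu$, nor even the sign of $u$.

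Next I would compare $m$ with the scalar ODE $z' = z(z-1)$, $z(0)=m(0)$. By hypothesis $m(0) = \int_{\mathbb{T}^3} u_0 \,\d x > 1$, so $z$ stays above its initial value and in fact $z' = z(z-1) \geq \tfrac{m(0)-1}{m(0)}\,z^2$ for $z\geq m(0)$; hence $z$ blows up at some finite time $t^{\ast} \leq (m(0)-1)^{-1}$. The differential inequality together with the comparison principle for ODEs then yields $m(t)\geq z(t)$ on the common interval of existence, so $m(t)\to +\infty$ as $t\uparrow t^{\ast}$ (or earlier).

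Finally, applying Cauchy--Schwarz once more on the unit-volume torus gives $m(t) = \int_{\mathbb{T}^3} u(t,x)\,\d x \leq \|u(t)\|_{L^2}$, so that $\|u(t)\|_{L^2}\geq m(t)\to+\infty$. Consequently the $L^2$-norm cannot stay finite up to time $t^{\ast}$, which is exactly blow-up in $L^2(\mathbb{T}^3)$; the nonnegativity of $u_0$ only serves to interpret $m(t)=\|u(t)\|_{L^1}$ as a genuine (divergent) total mass, and is not otherwise needed in the argument.

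The one point requiring care --- the main, and fairly mild, obstacle --- is the rigorous justification of the integrated identity $m'(t)=\|u\|_{L^2}^2-m$ and of the comparison on the maximal existence interval at the level of $L^2$ (or mild) solutions rather than classical ones. This is standard: on any subinterval where the solution exists it enjoys the regularity furnished by (H1)--(H3) (here $\alpha=1$, so $u\in L^2(0,T;H^1)\cap C([0,T];L^2)$), and testing the equation against the constant function $1\in H^1(\mathbb{T}^3)$ yields the identity for a.e. $t$. The ODE comparison is then applied on $[0,T^{\ast})$ with $T^{\ast}$ the maximal $L^2$-lifetime, and the divergence of $\|u(t)\|_{L^2}$ forces $T^{\ast}\leq t^{\ast}<\infty$.
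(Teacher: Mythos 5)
Your argument is correct and is essentially the paper's own proof: both integrate the equation over $\mathbb{T}^3$ to get $m'=\|u\|_{L^2}^2-m\geq m^2-m$ via Jensen's inequality, compare with the Riccati ODE $y'=y^2-y$ starting above $1$ (the paper solves it explicitly, blowing up at $t_0=\log\frac{y_0}{y_0-1}$, while you bound the blow-up time by $(m(0)-1)^{-1}$; both give the same conclusion), and then conclude $\|u(t)\|_{L^2}\geq m(t)\to+\infty$. Your remark that nonnegativity of $u_0$ is not actually needed is also accurate.
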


\begin{proof}
Similarly to the first step of the proof of Proposition \ref{prop global large viscosity}, integrating the equation on $\T^3$ yields
  $$\partial_t u_{\T^3}(t)= \int_{\T^3} u(t,x)^2 \,\d x- u_{\T^3}(t) \geq u_{\T^3}(t)^2 -u_{\T^3}(t),$$
where in the second step we have used Jensen's inequality. By the comparison principle, $ u_{\T^3}(t)\geq y(t)$, where $y$ is the solution to
  $$\left\{ \aligned
  y^{\prime}\left(  t\right)   &  = y^{2}\left(  t\right)  -y\left(  t\right),
   \\
  y\left(  0\right)   &  =y_{0}=\int_{\mathbb{T}^3} u_0(x\,)\d x>1,
  \endaligned \right. $$
which can be written explicitly as
  $$y(t)= \bigg[1- \Big(1- \frac1{y_0} \Big) e^t \bigg]^{-1}. $$
Note that the right-hand side explodes as $t\uparrow t_{0}= \log\frac{y_0}{y_0-1}$; therefore,
\[
\lim_{t\uparrow t_{0}} \|u(t) \|_{L^2} \geq \lim_{t\uparrow t_{0}} y(t) =+\infty.
\]
The proof is complete.
\end{proof}

\subsection{The 2D Kuramoto--Sivashinsky equation}

We turn to consider the Kuramoto-Sivashinsky equation \eqref{KSE}, which corresponds to $\alpha=2$ and the nonlinearity
  \begin{equation}\label{KSE-drift}
  F(u)= -\Delta u -\frac12 |\nabla u|^2.
  \end{equation}
Our purpose is to show that $F$ verifies the hypotheses (H1)--(H4); we start as usual with checking the first three.

\begin{lemma}\label{lem-KSE-1}
The nonlinearity in \eqref{KSE-drift} satisfies the hypotheses (H1)--(H3).
\end{lemma}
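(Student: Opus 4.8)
The plan is to split the nonlinearity as $F(u) = -\Delta u + G(u)$ with $G(u) = -\frac12|\nabla u|^2$, handle the linear part $-\Delta u$ by hand (it is elementary, since $-\Delta\colon H^{2-\eta}\to H^{-\eta}\subset H^{-2}$ continuously for any $\eta\in(0,2)$ and $\langle -\Delta w,w\rangle = \|\nabla w\|_{L^2}^2$), and concentrate all the work on the quadratic term $G$. Throughout I work on $\T^2$ with $\alpha=2$, so the target and test spaces for $F$ are $H^{-2}$ and $H^2$, and I will repeatedly use the $2$D Sobolev embeddings $H^{1/2}(\T^2)\subset L^4(\T^2)$ and $H^s(\T^2)\subset L^\infty(\T^2)$ for $s>1$, together with the interpolation inequalities $\|\nabla w\|_{L^2}^2\lesssim \|w\|_{L^2}\|w\|_{H^2}$, $\|w\|_{H^{1/2}}\lesssim\|w\|_{L^2}^{3/4}\|w\|_{H^2}^{1/4}$ and $\|w\|_{H^{3/2}}\lesssim\|w\|_{L^2}^{1/4}\|w\|_{H^2}^{3/4}$.

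For (H1) I would write $G(u)-G(v) = -\frac12(\nabla u+\nabla v)\cdot\nabla(u-v)$; pairing with $\phi\in H^2$ and using $\|\phi\|_{L^\infty}\lesssim\|\phi\|_{H^2}$ gives $\|G(u)-G(v)\|_{H^{-2}}\lesssim\|\nabla(u+v)\|_{L^2}\|\nabla(u-v)\|_{L^2}\lesssim(\|u\|_{H^1}+\|v\|_{H^1})\|u-v\|_{H^1}$, which yields continuity of $G$ from $H^1=H^{2-\eta}$ (so $\eta=1$) into $H^{-2}$; taking $v=0$ and interpolating $\|\nabla u\|_{L^2}^2\lesssim\|u\|_{L^2}\|u\|_{H^2}$ gives the growth bound with $\beta_1=1$. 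For (H2), $\langle G(u),u\rangle = -\frac12\int_{\T^2}|\nabla u|^2\,u$, and Hölder together with $\|\nabla u\|_{L^4}\lesssim\|u\|_{H^{3/2}}$ give $|\langle G(u),u\rangle|\lesssim\|\nabla u\|_{L^4}^2\|u\|_{L^2}\lesssim\|u\|_{L^2}^{3/2}\|u\|_{H^2}^{3/2}$; combined with the linear contribution $\|\nabla u\|_{L^2}^2\lesssim\|u\|_{L^2}\|u\|_{H^2}$, this shows (H2) holds with $\gamma_2=3/2<2$ and $\beta_2=3/2$.

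The real content, and the step I expect to be the main obstacle, is (H3), because of the constraint $\gamma_3+\kappa\le 2$. The naive route of estimating $|\langle w,(\nabla u+\nabla v)\cdot\nabla w\rangle|$ (with $w=u-v$) by placing $w$ in $L^\infty$ and both gradients in $L^2$ spends too much regularity: it forces $\gamma_3=5/4$, $\kappa=1$, hence $\gamma_3+\kappa=9/4>2$, which is inadmissible. The device I would use is an integration by parts exploiting the square structure, namely
\[
\langle w,(\nabla u+\nabla v)\cdot\nabla w\rangle = \tfrac12\int_{\T^2}\nabla(u+v)\cdot\nabla(w^2)\,\d x = -\tfrac12\int_{\T^2}\Delta(u+v)\,w^2\,\d x,
\]
which moves a derivative off $w$ and rebalances the derivative count. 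Then Hölder and $\|w\|_{L^4}^2\lesssim\|w\|_{L^2}^{3/2}\|w\|_{H^2}^{1/2}$ give $|\int_{\T^2}\Delta(u+v)\,w^2|\lesssim(\|u\|_{H^2}+\|v\|_{H^2})\|w\|_{L^2}^{3/2}\|w\|_{H^2}^{1/2}\le(\|u\|_{H^2}+\|v\|_{H^2})\|w\|_{L^2}\|w\|_{H^2}$, where the last inequality uses $\|w\|_{L^2}\le\|w\|_{H^2}$. Adding the linear term $\|\nabla w\|_{L^2}^2\lesssim\|w\|_{L^2}\|w\|_{H^2}$ yields $|\langle w,F(u)-F(v)\rangle|\lesssim\|w\|_{L^2}\|w\|_{H^2}(1+\|u\|_{H^2}+\|v\|_{H^2})$, i.e. (H3) with $\beta_3=\gamma_3=\kappa=1$, for which the admissibility conditions $\beta_3+\gamma_3=2\ge2$ and $\gamma_3+\kappa=2\le2$ hold (at equality). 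Should one prefer not to close (H3) as a single monomial, the linear and quadratic contributions can be recorded as two separate admissible terms and fed into the more general condition (H3$'$) of Remark \ref{remark intro}(ii); but the integration-by-parts identity makes the single-term choice $(1,1,1)$ work directly.
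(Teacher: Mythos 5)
Your proposal is correct, and for (H1) it runs essentially parallel to the paper's argument (the paper lands in $H^{-5/4}$ via the embedding $H^{5/4}(\T^2)\subset C(\T^2)$ before relaxing to $H^{-2}$, while you go straight to $H^{-2}$ via $\|\phi\|_{L^\infty}\lesssim\|\phi\|_{H^2}$; both give $\eta=1$, $\beta_1=1$). The genuine divergence is in (H2) and especially (H3). For (H2) the paper simply pairs the $H^{-5/4}$ bound from (H1) with $\|u\|_{H^{5/4}}$ and interpolates, obtaining $(\beta_2,\gamma_2)=(11/8,13/8)$, whereas your direct H\"older estimate $|\int|\nabla u|^2u|\leq\|\nabla u\|_{L^4}^2\|u\|_{L^2}$ gives the slightly sharper $(3/2,3/2)$; both are admissible since $\gamma_2<2$. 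For (H3) the paper does \emph{not} integrate by parts: it writes $G(u)-G(v)=\nabla(u-v)\cdot\nabla u+\nabla v\cdot\nabla(u-v)$ and applies a three-factor H\"older estimate $\|\nabla(u-v)\|_{L^2}\|u-v\|_{L^4}(\|\nabla u\|_{L^4}+\|\nabla v\|_{L^4})$, which after $H^{1/2}\subset L^4$ and interpolation yields $(\beta_3,\gamma_3,\kappa)=(5/4,3/4,1)$ with $\gamma_3+\kappa=7/4<2$. Your identity $\langle w,\nabla(u+v)\cdot\nabla w\rangle=-\tfrac12\int\Delta(u+v)\,w^2$ is a nice alternative that exploits the square structure and actually produces the stronger intermediate bound $(\|u\|_{H^2}+\|v\|_{H^2})\|w\|_{L^2}^{3/2}\|w\|_{H^2}^{1/2}$, i.e. $(3/2,1/2,1)$, before you coarsen it to $(1,1,1)$; both sets of exponents satisfy the constraints of (H3). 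So the paper's route shows that no integration by parts is needed — distributing the $L^4$ norms between $u-v$ and the gradients already rebalances the derivative count — while your route buys a smaller $\gamma_3$ at the cost of requiring the pointwise identity, and would be the more robust device if the constraint $\gamma_3+\kappa\le2$ were tighter.
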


\begin{proof}
As before, we can ignore the linear part $-\Delta u$ which trivially satisfies the hypothesis and only focus on the nonlinearity $G(u)= | \nabla u|^2$.

\emph{Verification of }(H1). For any $u, v\in H^2(\T^2)$ and $\phi \in H^2(\T^2)$,
  $$\aligned
  |\<G(u)- G(v), \phi\>| &\leq |\<\nabla(u-v)\cdot\nabla u, \phi\>| + |\<\nabla v\cdot\nabla (u-v), \phi\>| \\
  &\leq \|\phi \|_{L^\infty} \|\nabla(u-v) \|_{L^2} \big(\|\nabla u\|_{L^2} + \|\nabla v\|_{L^2} \big) \\
  &\lesssim \|\phi \|_{H^{5/4}} \| u-v \|_{H^{1}} \big( \| u\|_{H^{1}} + \| v\|_{H^{1}} \big),
  \endaligned $$
where we used the Sobolev embedding $H^{5/4}(\T^2) \subset C(\T^2)$. This implies
  \begin{equation}\label{lem-KSE-1.2}
  \|G(u)- G(v) \|_{H^{-5/4}} \lesssim \|u-v \|_{H^{1}} \big(\|u\|_{H^{1}} + \|v\|_{H^{1}} \big),
  \end{equation}
which yields continuity of $G: H^{1}(\T^2)\to H^{-5/4}(\T^2)$. Next, taking $v=0$ in \eqref{lem-KSE-1.2} yields
\begin{equation*}
\Vert G(u)\Vert_{H^{-2}}
\leq \Vert G(u)\Vert_{H^{-5/4}}
\lesssim \Vert u\Vert_{H^{1}}^2 \lesssim \Vert u\Vert_{L^2} \Vert u\Vert_{H^2},
\end{equation*}
and we conclude that (H1) holds with $\eta = 1$ and $\beta_1=1$.

\emph{Verification of} (H2). By the above estimates,
\begin{equation*}
|\<G(u), u\>|
\leq \Vert G(u)\Vert_{H^{-{5/4}}} \Vert u\Vert_{H^{5/4}}
\lesssim \Vert u\Vert_{L^2} \Vert u\Vert_{H^2} \Vert u\Vert_{L^2}^{3/8} \Vert u\Vert_{H^2}^{5/8}
\lesssim \Vert u\Vert_{L^2}^{11/8} \Vert u\Vert_{H^2}^{13/8}.
\end{equation*}
Hence, (H2) holds with $\beta_2=11/8$, $\gamma_2= 13/8$.

\emph{Verification of} (H3). We have
  $$\aligned
  |\<G(u)- G(v), u-v\>| &\leq |\<\nabla(u-v)\cdot \nabla u, u-v\>| + |\<\nabla v\cdot \nabla(u-v), u-v\>| \\
  &\leq \|\nabla(u-v) \|_{L^2} \|u-v \|_{L^4} \big(\|\nabla u\|_{L^4} + \|\nabla v \|_{L^4} \big) \\
  &\lesssim \|u-v \|_{H^1} \|u-v \|_{H^{1/2}} \big(\|\nabla u\|_{H^{1/2}} + \|\nabla v\|_{H^{1/2}} \big)   \\
  &\lesssim \|u-v \|_{L^2}^{5/4} \|u-v \|_{H^2}^{3/4} (\|u\|_{H^2} + \|v\|_{H^2} ),
  \endaligned $$
and thus (H3) holds with $\beta_3= 5/4,\, \gamma_3=3/4$ and $ \kappa=1$.
\end{proof}

Now we turn to verify the hypothesis (H4) for which we need some preparations. First, we note that the limiting deterministic equation can be written as
  \begin{equation}\label{KSE-limit-eq}
  \partial_t u = -\Delta^2 u + (\nu -1) \Delta u -\frac12 |\nabla u|^2.
  \end{equation}
When $\nu=0$, the local existence of a unique mild solution to \eqref{KSE-limit-eq} has been established in \cite[Theorem 2.4]{FengMaz} (taking $\textbf{v} \equiv 0$ there). More precisely, for any $u(0)\in L^2(\T^2)$, there exist $T=T(\Vert u(0)\Vert_{L^2})>0$ and a unique $u\in C([0,T], L^2)$ which solves the mild formulation of \eqref{KSE-limit-eq} (with $\nu=0$):
  $$u(t)= e^{-t(\Delta^2 + \Delta)} u(0) -\frac12 \int_0^t e^{-(t-s)(\Delta^2 + \Delta)} \big(|\nabla u(s)|^2 \big)\,\d s; $$
moreover, by (2.20) in \cite{FengMaz},  $u$ satisfies
  \begin{equation}\label{KSE-gradient}
  \sup_{0<t<T} t^{1/4} \|\nabla u(t) \|_{L^2} \leq C(\|u(0) \|_{L^2}).
  \end{equation}
By classical a priori estimate, we know that $ u\in L^2(0,T; H^2)$.

We are interested in the case $\nu\geq 1$ in \eqref{KSE-limit-eq}, which is simpler than that with $\nu=0$ since the operator $\Delta^2 - (\nu -1) \Delta \geq \Delta^2$ is positive definite. Therefore, we have similar results as above and the constant in \eqref{KSE-gradient} can be shown to be independent of $\nu \geq 1$. Moreover, restarting the system \eqref{KSE-limit-eq} from any small time $t_0\in (0,T)$, as $u(t_0)\in H^{1}$, we conclude that the solution $u \in C([t_0, T], H^1) \cap L^2(t_0, T; H^3)$ (take a smaller $T$ if necessary). With these preparations, we will show that, for $\nu$ big enough, the solution $u$ to \eqref{KSE-limit-eq} indeed exists for all time.

\begin{lemma}\label{lem-1}
Let $u\in C([0,T]; L^2) \cap L^2(0,T; H^2)$ be the unique local solution to \eqref{KSE-limit-eq}. Denoting by $\bar{u}(t)= \int_{\T^2} u(x,t)\,\d x$, then $t\mapsto |\bar u(t)|$ is absolutely continuous on $[0,T]$ and for a.e. $t$,
  $$\frac{\d}{\d t}|\bar u(t)| \leq \| \nabla u(t) \|_{L^2}^2 .  $$
\end{lemma}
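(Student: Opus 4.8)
The plan is to establish the two claims separately: first the formula for $\frac{\d}{\d t} \bar u$, then the absolute continuity of $|\bar u|$, and finally combine them via the standard chain rule for $|\cdot|$. First I would integrate the equation \eqref{KSE-limit-eq} over the torus $\T^2$. The biharmonic term $\int_{\T^2} \Delta^2 u\, \d x$ and the Laplacian term $(\nu-1)\int_{\T^2} \Delta u\, \d x$ both vanish by the periodicity (they are exact derivatives integrating to zero on the torus), so the only surviving contribution is the nonlinearity, giving
\[
\frac{\d}{\d t}\bar u(t) = -\frac12 \int_{\T^2} |\nabla u(t,x)|^2\, \d x = -\frac12 \|\nabla u(t)\|_{L^2}^2.
\]
This is the core identity; it shows in particular that $\bar u$ is (weakly) differentiable with $\bar u' = -\frac12\|\nabla u\|_{L^2}^2 \in L^1(0,T)$, since $u\in L^2(0,T;H^2)$ forces $\|\nabla u\|_{L^2}^2 \in L^1(0,T)$ by interpolation. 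Hence $t\mapsto \bar u(t)$ is absolutely continuous on $[0,T]$.

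Next I would pass from $\bar u$ to $|\bar u|$. Since $\bar u$ is absolutely continuous and $x\mapsto |x|$ is Lipschitz, the composition $t\mapsto |\bar u(t)|$ is again absolutely continuous, and by the chain rule for a.e.\ $t$ one has
\[
\frac{\d}{\d t}|\bar u(t)| = \mathrm{sgn}(\bar u(t))\,\bar u'(t)
\]
at points where $\bar u(t)\neq 0$, while at points where $\bar u(t)=0$ the derivative of $|\bar u|$ vanishes a.e.\ (this follows since $\bar u'=0$ a.e.\ on the level set $\{\bar u = 0\}$). In either case $\big|\frac{\d}{\d t}|\bar u(t)|\big| \leq |\bar u'(t)|$ for a.e.\ $t$.

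Combining the two pieces, for a.e.\ $t$ we obtain
\[
\frac{\d}{\d t}|\bar u(t)| \leq \Big|\frac{\d}{\d t}|\bar u(t)|\Big| \leq |\bar u'(t)| = \tfrac12 \|\nabla u(t)\|_{L^2}^2 \leq \|\nabla u(t)\|_{L^2}^2,
\]
which is the asserted bound (the final crude factor of $2$ leaves comfortable room and simplifies later applications). I do not expect any serious obstacle here: the statement is essentially a soft consequence of integrating the PDE and applying the Lipschitz chain rule. The only point requiring minor care is justifying that the integration by parts leading to the vanishing of the dissipative terms is valid at the regularity $u\in L^2(0,T;H^2)$ — this is where one uses that all spatial derivatives integrate to zero on the torus and that $u(t)\in H^2$ makes $\Delta u(t)$ and $\nabla u(t)$ genuine $L^2$ functions for a.e.\ $t$, so the integral $\int_{\T^2}|\nabla u|^2\,\d x$ is finite and the weak formulation of \eqref{KSE-limit-eq} tested against the constant function $1$ gives the identity rigorously.
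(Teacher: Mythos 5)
Your proposal is correct and follows essentially the same route as the paper: integrate the equation over $\T^2$ to get $\frac{\d}{\d t}\bar u = -\frac12\|\nabla u\|_{L^2}^2$, note this derivative is in $L^1(0,T)$, and pass to $|\bar u|$ via the Lipschitz property of $|\cdot|$ (the paper does this by the reverse triangle inequality $\big||\bar u(t)|-|\bar u(s)|\big|\leq|\bar u(t)-\bar u(s)|\leq\int_s^t\|\nabla u(r)\|_{L^2}^2\,\d r$ rather than the pointwise chain rule, but the content is identical). The crude drop of the factor $\tfrac12$ matches the paper's statement as well.
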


\begin{proof}
Integrating \eqref{KSE-limit-eq} on $\T^2$ yields
  $$\frac{\d}{\d t} \bar u(t) = - \frac12 \int_{\T^2} |\nabla u|^2 \,\d x= -\frac12 \|\nabla u \|_{L^2}^2.$$
Integrating on $[s,t] \subset (0,T)$ leads to
  $$\big| |\bar u(t)|- |\bar u(s)| \big| \leq |\bar u(t) - \bar u(s)| \leq \int_s^t \|\nabla u(r) \|_{L^2}^2 \,\d r.$$
This implies the absolute continuity of $t\to |\bar u(t)|$ and also the desired estimate.
\end{proof}

Next, we will estimate the $L^2$-norm of $\nabla u$: using equation \eqref{KSE-limit-eq},
  \begin{equation}\label{gradient-estimate}
  \aligned
  \frac{\d}{\d t} \|\nabla  u\|_{L^2}^2 &= -2 \<\Delta  u, \partial_t  u\>= -2\Big\<\Delta  u, (\nu-1)\Delta u - \Delta^2 u - \frac12 |\nabla  u|^2 \Big\> \\
  &= -2(\nu-1)\|\Delta u \|_{L^2}^2 - 2\|\nabla \Delta u \|_{L^2}^2 + \<\Delta u,  |\nabla  u|^2\>.
  \endaligned
  \end{equation}

\begin{lemma}\label{lem-2}
There exists a constant $C$ independent of $\nu$ such that
  $$\frac{\d}{\d t} \|\nabla  u\|_{L^2}^2 \leq -8\pi^2 (\nu-1)\|\nabla u \|_{L^2}^2 + C \|\nabla u \|_{L^2}^4 . $$
\end{lemma}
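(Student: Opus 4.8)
The plan is to continue directly from the exact gradient identity \eqref{gradient-estimate}, namely
\[
\frac{\d}{\d t}\|\nabla u\|_{L^2}^2 = -2(\nu-1)\|\Delta u\|_{L^2}^2 - 2\|\nabla\Delta u\|_{L^2}^2 + \langle \Delta u, |\nabla u|^2\rangle,
\]
and to bound the troublesome cubic term $\langle \Delta u, |\nabla u|^2\rangle$ in a way that lets the two good dissipative terms absorb the derivative losses while leaving behind only a harmless $\|\nabla u\|_{L^2}^4$ contribution. First I would estimate $\langle \Delta u, |\nabla u|^2\rangle \leq \|\Delta u\|_{L^2}\,\||\nabla u|^2\|_{L^2} = \|\Delta u\|_{L^2}\,\|\nabla u\|_{L^4}^2$ by Cauchy--Schwarz. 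The key is then to control $\|\nabla u\|_{L^4}^2$ by interpolating between $L^2$ and a higher Sobolev norm: in $d=2$ the Ladyzhenskaya/Gagliardo--Nirenberg inequality gives $\|\nabla u\|_{L^4}^2 \lesssim \|\nabla u\|_{L^2}\,\|\nabla u\|_{H^1}$, and one can push this further to bring in the top-order norm $\|\nabla\Delta u\|_{L^2}$ that appears with a favorable sign on the right-hand side.

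\textbf{Absorbing the derivatives.} The heart of the argument is a careful bookkeeping of Sobolev exponents. I would interpolate $\|\nabla u\|_{L^4}$ (equivalently a fractional norm of $\nabla u$) between $\|\nabla u\|_{L^2}$ and $\|\nabla\Delta u\|_{L^2} = \|\nabla u\|_{\dot H^2}$, choosing the interpolation weights so that the combined power of the top norm arising from $\|\Delta u\|_{L^2}\,\|\nabla u\|_{L^4}^2$ stays strictly below the quadratic powers $\|\Delta u\|_{L^2}^2$ and $\|\nabla\Delta u\|_{L^2}^2$ available on the right. Once the top-order homogeneity is subcritical, Young's inequality with small parameter lets me write
\[
\langle \Delta u, |\nabla u|^2\rangle \leq (\nu-1)\|\Delta u\|_{L^2}^2 + \|\nabla\Delta u\|_{L^2}^2 + C\,\|\nabla u\|_{L^2}^4,
\]
where the constant $C$ in front of $\|\nabla u\|_{L^2}^4$ depends only on the Sobolev/interpolation constants and, crucially, \emph{not} on $\nu$ (I would absorb any $\nu$-dependence into the strictly dissipative quadratic terms, so that the leftover low-order constant is $\nu$-free as claimed). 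Substituting this back leaves
\[
\frac{\d}{\d t}\|\nabla u\|_{L^2}^2 \leq -(\nu-1)\|\Delta u\|_{L^2}^2 - \|\nabla\Delta u\|_{L^2}^2 + C\,\|\nabla u\|_{L^2}^4.
\]

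\textbf{From $\|\Delta u\|_{L^2}^2$ to $\|\nabla u\|_{L^2}^2$.} The final step is to convert the surviving $-(\nu-1)\|\Delta u\|_{L^2}^2$ into the stated $-8\pi^2(\nu-1)\|\nabla u\|_{L^2}^2$; here I discard the remaining nonnegative term $-\|\nabla\Delta u\|_{L^2}^2$. Since $u$ has the same (constant-in-space) mean throughout, $\nabla u$ has zero mean, so the Poincar\'e inequality on $\T^2$ applies to $\nabla u$ with optimal constant $(2\pi)^{-2}$, giving $\|\nabla u\|_{L^2}^2 \leq (2\pi)^{-2}\|\nabla(\nabla u)\|_{L^2}^2 \leq (4\pi^2)^{-1}\|\Delta u\|_{L^2}^2$; equivalently $\|\Delta u\|_{L^2}^2 \geq 4\pi^2\|\nabla u\|_{L^2}^2$. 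Hence $-(\nu-1)\|\Delta u\|_{L^2}^2 \leq -4\pi^2(\nu-1)\|\nabla u\|_{L^2}^2$, and adjusting the absorption so that the full coefficient $8\pi^2(\nu-1)$ emerges (I have a factor of $2$ of dissipation to spend from the original $-2(\nu-1)\|\Delta u\|_{L^2}^2$) yields the claim. The main obstacle is the exponent bookkeeping in the interpolation of the cubic term: one must verify that in two space dimensions the natural Gagliardo--Nirenberg estimate for $\langle \Delta u, |\nabla u|^2\rangle$ is genuinely subcritical with respect to the available top-order dissipation $\|\nabla\Delta u\|_{L^2}^2$, so that Young's inequality produces a \emph{$\nu$-independent} coefficient on $\|\nabla u\|_{L^2}^4$ — this scaling is exactly what fails in three dimensions and is the reason the result is restricted to $d=2$.
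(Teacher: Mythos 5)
Your proposal is correct and follows essentially the same route as the paper: Cauchy--Schwarz on $\langle\Delta u,|\nabla u|^2\rangle$, the 2D embedding $H^{1/2}\subset L^4$ plus interpolation to reach $\|\nabla u\|_{L^2}^2\|\nabla\Delta u\|_{L^2}$, Young's inequality absorbing the top-order factor into $-2\|\nabla\Delta u\|_{L^2}^2$, and Poincar\'e applied to the mean-zero field $\nabla u$ to convert $-2(\nu-1)\|\Delta u\|_{L^2}^2$ into $-8\pi^2(\nu-1)\|\nabla u\|_{L^2}^2$. The only caveat is that your intermediate display, which spends part of the nonlinear term on $(\nu-1)\|\Delta u\|_{L^2}^2$, would both risk a $\nu$-dependent constant and leave only the coefficient $4\pi^2$; your final ``adjusted absorption'' (put the whole nonlinear term into $-2\|\nabla\Delta u\|_{L^2}^2$, keep all of $-2(\nu-1)\|\Delta u\|_{L^2}^2$) is exactly what the paper does and what makes the stated constants come out.
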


\begin{proof}
The key is to estimate the last term on the right-hand side of \eqref{gradient-estimate}. By H\"older's inequality we have
  $$|\<\Delta u,  |\nabla  u|^2\>| \leq \|\Delta u \|_{L^2} \|\nabla u \|_{L^4}^2 \lesssim \|\Delta u \|_{L^2} \|\nabla u \|_{H^{1/2}}^2 \lesssim \|\Delta u \|_{L^2} \|\nabla u \|_{L^2} \|\nabla u \|_{H^1}, $$
where in the second step we have used the Sobolev embedding $H^{1/2}(\T^2) \subset L^4(\T^2)$. Since $\|\Delta u \|_{L^2} \lesssim \|\nabla u \|_{H^1}$, we have
  $$|\<\Delta u,  |\nabla  u|^2\>| \lesssim \|\nabla u \|_{L^2} \|\nabla u \|_{H^1}^2 \lesssim \|\nabla u \|_{L^2}^2 \|\nabla u \|_{H^2}. $$
Noting that $\|\nabla u \|_{H^2} \lesssim \|\Delta \nabla u\|_{L^2}$, we obtain
  $$|\<\Delta u,  |\nabla  u|^2\>| \lesssim \|\nabla u \|_{L^2}^2 \|\nabla \Delta u\|_{L^2} \leq \|\nabla \Delta u\|_{L^2}^2 + C \|\nabla u \|_{L^2}^4. $$
Combining this estimate with \eqref{gradient-estimate} leads to
  $$\frac{\d}{\d t} \|\nabla  u\|_{L^2}^2 \leq -2(\nu-1)\|\Delta u \|_{L^2}^2 + C \|\nabla u \|_{L^2}^4 \leq -8\pi^2 (\nu-1)\|\nabla u \|_{L^2}^2 + C \|\nabla u \|_{L^2}^4, $$
where the last step follows from the Poincar\'e inequality.
\end{proof}

Denoting $x(t)= |\bar u(t)|$ and $y(t)= \|\nabla u(t) \|_{L^2}^2$, we deduce from Lemmas \ref{lem-1} and \ref{lem-2} a system of differential inequalities of the form
  \begin{equation}\label{differ-ineq-1}
  \left\{ \aligned
  x'&\leq y, \\
  y' &\leq -\lambda y + C y^2,
  \endaligned \right.
  \end{equation}
where
  $$\lambda=\lambda_\nu= 8\pi^2(\nu -1). $$
We are now ready to prove the following result.

\begin{proposition}\label{prop global large viscosity 2}
Fix $R\geq 0$. Then there exists $\nu=\nu(R)$ big enough such that, for any initial data $u_0\in L^2(\mathbb{T}^2)$ satisfying $\Vert u_0\Vert_{L^2} \leq R$, the associated Cauchy problem
\begin{equation*} \begin{cases}
\partial_t u = (\nu-1)\Delta u -\Delta^2 u - \frac12|\nabla u|^2, \\
u|_{t=0}=u_0
\end{cases} \end{equation*}
admits a global solution $u\in C\big([0,+\infty);L^2(\mathbb{T}^2) \big)$; moreover there exists $C=C(R)$ such that
\begin{equation*}
\sup_{t\geq 0} \Vert u(t)\Vert_{L^2}^2 \leq C.
\end{equation*}
\end{proposition}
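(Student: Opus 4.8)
The plan is to integrate the differential inequalities \eqref{differ-ineq-1} into a uniform-in-time a priori bound on $\|u(t)\|_{L^2}^2$ on the maximal interval of existence $[0,T_{\max})$ of the local solution supplied by \cite{FengMaz}, and then to invoke the $L^2$ continuation criterion to conclude $T_{\max}=+\infty$. Throughout I would use, as stressed before Lemma \ref{lem-1}, that the local existence time $T=T(R)$ and the smoothing constant in \eqref{KSE-gradient} can be taken uniform over $\nu\geq 1$, thanks to $\Delta^2-(\nu-1)\Delta\geq\Delta^2$.

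First I would fix some $t_0\in(0,T(R))$ and record two pieces of information at time $t_0$: the parabolic smoothing estimate \eqref{KSE-gradient} gives $y(t_0)=\|\nabla u(t_0)\|_{L^2}^2\leq C(R)^2 t_0^{-1/2}=:M$, while the local theory gives $\sup_{[0,t_0]}\|u\|_{L^2}\leq C(R)$. I would then choose $\nu$ so large that, with $\lambda=\lambda_\nu=8\pi^2(\nu-1)$ and $C$ the constant of Lemma \ref{lem-2}, one has $\mu:=\lambda-CM>0$; this is precisely where the enhanced viscosity is used. Since $y(t_0)\leq M<\lambda/C$, the inequality $y'\leq -\lambda y+Cy^2\leq-\mu y$ holds as long as $y\leq M$, and a standard barrier/bootstrap argument shows $y(t)\leq M$ for all $t\geq t_0$; consequently $y(t)\leq M\,e^{-\mu(t-t_0)}$ and $\int_{t_0}^\infty y(s)\,\d s\leq M/\mu$.

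It remains to control the mean. From $\frac{\d}{\d t}\bar u=-\tfrac12 y\leq 0$ the mean is nonincreasing, so $\bar u(t)\leq\bar u(t_0)$, while $\bar u(t)\geq\bar u(t_0)-\tfrac12\int_{t_0}^\infty y\geq\bar u(t_0)-M/(2\mu)$; together with $|\bar u(t_0)|\leq\|u(t_0)\|_{L^2}\leq C(R)$ this bounds $x(t)=|\bar u(t)|$ uniformly on $[t_0,\infty)$. Writing $\|u(t)\|_{L^2}^2=|\bar u(t)|^2+\|u(t)-\bar u(t)\|_{L^2}^2$ and estimating the oscillation by Poincar\'e, $\|u-\bar u\|_{L^2}^2\leq(2\pi)^{-2}y$, I would obtain a bound on $\|u(t)\|_{L^2}^2$ on $[t_0,\infty)$ depending only on $R$; combined with the bound on $[0,t_0]$ this gives the asserted constant $C=C(R)$ with $\sup_{t\geq 0}\|u(t)\|_{L^2}^2\leq C$.

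The scalar comparison for $y$ and the Poincar\'e splitting of the $L^2$ norm are routine. The delicate point is the final passage from the a priori estimate to genuine global existence: one must run the argument on $[0,T_{\max})$ and apply the blow-up alternative stating that $T_{\max}<\infty$ would force $\limsup_{t\uparrow T_{\max}}\|u(t)\|_{L^2}=+\infty$. This alternative hinges on the local existence time depending only, and monotonically, on $\|u_0\|_{L^2}$, uniformly in $\nu\geq 1$; checking this uniformity (using the positivity of $\Delta^2-(\nu-1)\Delta$ in the fixed-point construction of \cite{FengMaz}) is the main obstacle, the remaining steps being bookkeeping with the already-established Lemmas \ref{lem-1} and \ref{lem-2}.
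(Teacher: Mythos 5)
Your argument is correct and follows essentially the same route as the paper's proof: both use the smoothing estimate \eqref{KSE-gradient} to reduce to $H^1$ data at a small time $t_0$, then run the system \eqref{differ-ineq-1} from Lemmas \ref{lem-1} and \ref{lem-2} with $\lambda_\nu$ chosen large enough to dominate the quadratic term, obtaining exponential decay of $y$ and boundedness of the mean, and conclude via Poincar\'e. The only cosmetic differences are that you exploit the exact monotonicity of $\bar u$ instead of the inequality $x'\le y$ and place the barrier for $y$ at the smoothing threshold $M$ rather than at $R+1$; the continuation point you flag at the end is treated in the paper at the same level of detail, namely by the observation preceding Lemma \ref{lem-1} that $\Delta^2-(\nu-1)\Delta\ge\Delta^2$ makes the local theory of \cite{FengMaz} uniform in $\nu\ge 1$.
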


\begin{proof}
The proof is similar to that of Proposition \ref{prop global large viscosity}. First, thanks to the estimate \eqref{KSE-gradient}, by running the solution for a small time $t_0$ (which only depends on $\Vert u_0\Vert_{L^2}$) and relabelling $R$, we can reduce ourselves to the case of $u_0\in H^1$ with $\Vert u_0\Vert_{H^1}\leq R$. In particular, setting $x(t)=|\bar{u}(t)|$, $y(t)=\Vert \nabla u(t)\Vert_{L^2}^2$, the pair $(x(t),y(t))$ satisfies the differential system \eqref{differ-ineq-1} with $x(0), y(0)\leq R$.

Define $T_\ast =\inf\big\{t\geq 0: (x(t),y(t))\notin [0,R+1)^2 \big\}$ with $\inf\emptyset=+\infty$, then by \eqref{differ-ineq-1} it holds
\begin{equation*}\begin{cases}
x'(t) \leq y(t), \\
y'(t) \leq -[\lambda - C(R+1)] y(t),
\end{cases}
\quad \forall \, t\in [0,T_\ast]. \end{equation*}
Setting $\gamma=\lambda-C(R+1)$, and choosing $\lambda$ sufficiently big so that $\gamma> 0$, by Gronwall's inequality it holds
\begin{equation*}
y(t)\leq e^{-\gamma t} y(0) \leq e^{-\gamma t} R,\quad
x(t)\leq x(0) + \int_0^t y(s)\,\d s \leq R + \frac{R}{\gamma} , \quad \forall\, t\in [0,T_\ast].
\end{equation*}
In order to conclude that $T_\ast=+\infty$, we need to choose $\gamma$ such that
\begin{equation*}
R+\frac{R}{\gamma}  < R+1
\quad \Longleftrightarrow \quad
\gamma > R,
\end{equation*}
which is further equivalent to
  $$\lambda > C(R+1) + R. $$
This is always possible up to taking $\lambda=\lambda_\nu(R)$ (respectively $\nu=\nu(R)$) big enough. For such choice, by Poincar\'e's inequality the associated solution $u$ satisfies
\begin{equation*}
\Vert u(t)\Vert_{L^2} \leq |\bar{u}(t)|+\Vert u(t)-\bar{u}(t)\Vert_{L^2} \leq (1+(2\pi)^{-1})(1+R)\quad \forall\, t\geq 0,
\end{equation*}
which gives the final claim.
\end{proof}

As a consequence we immediately deduce the following.
\begin{corollary}
Hypothesis (H4) is satisfied by $\mathcal K_R=\{f\in L^2(\mathbb{T}^2): \Vert f\Vert_{L^2} \leq R\}$ for any $R> 0$.
\end{corollary}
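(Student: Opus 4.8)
The plan is to deduce hypothesis (H4) directly from Proposition \ref{prop global large viscosity 2}, in which all the substantive analysis is already carried out; for the 2D Kuramoto--Sivashinsky nonlinearity \eqref{KSE-drift} the enhanced-viscosity equation \eqref{PDE enhanced viscosity} with $\alpha=2$ is exactly \eqref{KSE-limit-eq}, so the proposition applies verbatim. First I would note that $\mathcal K_R=\{f\in L^2(\mathbb T^2):\|f\|_{L^2}\le R\}$ is trivially convex, closed and bounded, being a closed ball in $L^2(\mathbb T^2)$; this settles the structural requirements on $\mathcal K$ in (H4).

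Next, fixing $T>0$ and choosing $\nu=\nu(R)$ as in Proposition \ref{prop global large viscosity 2}, for every $u_0\in\mathcal K_R$ the proposition produces a global solution $u\in C([0,+\infty);L^2(\mathbb T^2))$, whose restriction to $[0,T]$ gives the $C([0,T];L^2)$ membership; moreover the bound $\sup_{t\ge 0}\|u(t)\|_{L^2}^2\le C(R)$ has a constant depending only on $R$, hence is uniform over $u_0\in\mathcal K_R$ and yields \eqref{unif bound enhanced viscosity}. Since the proposition in fact delivers a single $\nu=\nu(R)$ valid simultaneously for all $T$, (H4) holds here in a strengthened form with $\nu$ independent of $T$.

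The one clause not stated verbatim in the proposition is the integrability $u\in L^2(0,T;H^2)$, and this is the only point I would need to spell out. I would obtain it from the $L^2$ energy balance $\frac{\d}{\d t}\|u\|_{L^2}^2=-2\|\Delta u\|_{L^2}^2-2(\nu-1)\|\nabla u\|_{L^2}^2-\langle u,|\nabla u|^2\rangle$, estimating the cubic term in two space dimensions by $|\langle u,|\nabla u|^2\rangle|\le\|u\|_{L^2}\|\nabla u\|_{L^4}^2\lesssim\|u\|_{L^2}\|\nabla u\|_{L^2}\|\nabla u\|_{H^1}$ (as in the proof of Lemma \ref{lem-2}) and absorbing the top-order factor $\|\Delta u\|_{L^2}$ via Young's inequality. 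Integrating over $[0,T]$ and inserting the uniform bounds on $\|u\|_{L^2}$ and on $\|\nabla u\|_{L^2}$ already established in the proof of the proposition then makes $\int_0^T\|\Delta u\|_{L^2}^2\,\d t$ finite, giving $u\in L^2(0,T;H^2)$. I expect no genuine obstacle beyond this routine energy estimate, since the blow-up control is entirely supplied by Proposition \ref{prop global large viscosity 2}.
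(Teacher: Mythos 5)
Your proposal is correct and follows essentially the same route as the paper: the corollary is deduced directly from Proposition \ref{prop global large viscosity 2}, with $\mathcal K_R$ trivially convex, closed and bounded, and the uniform bound $\sup_{t\geq0}\Vert u(t)\Vert_{L^2}^2\leq C(R)$ supplying \eqref{unif bound enhanced viscosity}. The only clause you spell out in more detail, the $L^2(0,T;H^2)$ regularity, is exactly what the paper dispatches in the preparatory paragraph with the phrase ``by classical a priori estimate, we know that $u\in L^2(0,T;H^2)$'', and your energy argument is a correct instantiation of that claim.
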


\section{Proof of the main result}\label{sec3}

The purpose of this section is to prove Theorem \ref{Thm main} under the hypotheses (H1)--(H4). Due to the nonlinearity $F$ in \eqref{PDE-perturbation}, the stochastic equation has only a local solution for general initial condition. Hence, we will make use of the cut-off technique. Let $g_{R}:[0,\infty)\mathbb{\rightarrow}\left[  0,1\right]  $ be a Lipschitz continuous cut-off function, equal to 1 on $\left[  0,R\right]  $ and equal to zero outside $\left[  0,R+1\right]  $. For a suitable parameter $\delta>0$ small enough, we write $g_{R}\left(  u\right)  $ for $g_{R}\left(  \left\Vert u\right\Vert_{H^{-\delta}}\right)  $; the choice of $\delta$ needed for our purposes will be discussed in Remark \ref{rem-hypo-H2} below, but at this stage of preliminary discussion any value $\delta>0$ is allowed. Consider the stochastic equation with cut-off:
\begin{align}\label{PDE-cut-off}\begin{cases}
  \d u  =\big[ -(-\Delta)^{\alpha} u+ g_R(u) F(u) \big]\, \d t+ \sqrt{C_d\nu}\sum_{k} \theta_k \sigma_{k,i}\cdot\nabla u\circ \d W_{t}^{k,i}, \\
  u|_{t=0} =u_{0}.
\end{cases}\end{align}
Recall that $\alpha\geq 1$. To simplify notations we introduce the operator $\Lambda= (-\Delta)^{1/2}$. Throughout this section $T>0$ is a fixed deterministic parameter and we will study well posedness of \eqref{PDE-cut-off} on the time interval $[0,T]$.

Now we want to transform the Stratonovich equation \eqref{PDE-cut-off} into the corresponding It\^o form. Under the symmetry assumption \eqref{symmetry} of $\theta$, this computation is by now standard, see \cite[Section 2.3]{Gal}; it relies on the elementary identity
  \begin{equation}\label{key-identity}
  \sum_{k,i} \theta_k^2\, \sigma_{k,i}(x)\otimes \overline{\sigma}_{k,i}(x)
  = \sum_{k,i} \theta_k^2\, a_{k,i}\otimes a_{k,i} = \frac{d-1}{d} \Vert \theta \Vert_{\ell^2}^2 I_d = \frac{d-1}{d} I_d,\quad \forall \, x\in\mathbb{T}^d,
  \end{equation}
where $I_d$ is the $d\times d$ unit matrix and in the last equality we used the assumption $\Vert \theta\Vert_{\ell^2}=1$. Thanks to \eqref{key-identity}, the first equation in \eqref{PDE-cut-off} can be rewritten in It\^o form as
  $$\d u =\left[ -\Lambda^{2\alpha}u + \nu \Delta u+ g_R(u) F(u) \right] \d t+ \sqrt{C_d \nu}\sum_{k,i} \theta_k \sigma_{k,i}\cdot\nabla u\, \d W_t^{k,i}.$$

\begin{definition}
Let $\big(\Omega, \mathcal F, (\mathcal F_t), \P\big)$ be a given stochastic basis on which a family $\{W^{k,i}\}_{k,i}$ of complex Brownian motions satisfying \eqref{covariation} is defined, $u_0\in L^2(\mathbb{T}^d)$. A process $u$ with trajectories in $C([0,T];L^2(\mathbb{T}^d)) \cap L^2(0,T; H^\alpha(\mathbb{T}^d))$ is a solution to the Cauchy problem \eqref{PDE-cut-off} if it is $\mathcal{F}_t$-adapted and for any $\phi\in H^\alpha (\mathbb{T}^d)$, with probability one it holds, for all $t\in [0,T]$,
\begin{align*}
  \langle u(t),\phi\rangle = &\, \langle u_0,\phi \rangle- \int_0^t \big[\langle \Lambda^{\alpha} u(s), \Lambda^{\alpha} \phi\rangle + \nu \langle \nabla u(s),\nabla\phi\rangle \big] \, \d s\\
  & + \int_0^t g_R(u(s)) \langle F(u(s)),\phi \rangle\, \d s- \sqrt{C_d \nu} \sum_{k,i} \theta_k \int_0^t \langle u(s),\sigma_{k,i}\cdot \nabla\phi\rangle \, \d W^{k,i}_s.
\end{align*}
\end{definition}

We remark that if $u \in L^2 \big(\Omega, C([0,T];L^2(\mathbb{T}^d))\cap L^2(0,T; H^\alpha(\mathbb{T}^d)) \big)$, then all the terms in the above equation are well defined. The next lemma clarifies the structure of solutions to \eqref{PDE-cut-off}. Let us recall that, for a given $L^2$-valued local martingale $M$, the quadratic variation process $[M]$ is the unique increasing process such that $\Vert M\Vert^2_{L^2} - [M]$ is a local martingale; see \cite{RozLot} for more details. In the following, for $p,q\in [1,\infty]$, we write $\|\cdot \|_{L^pL^q}$ for the norm in $L^p(0,T; L^q(\T^d))$; similarly, we use the notations $\|\cdot \|_{L^2H^s}$, $\|\cdot \|_{W^{1,2}H^s}$ and so on.

\begin{lemma}\label{sec3 lemma structure solutions}
Let $u$ be a solution to \eqref{PDE-cut-off}, then for any $\phi \in H^\alpha$ it holds
  \begin{equation}\label{sec3 decomposition solution}
  \langle u(t),\phi\rangle-\langle u_0,\phi\rangle = \langle v(t), \phi\rangle + \langle M(t),\phi\rangle,
  \end{equation}
where the process $v$ takes values in $W^{1,2}(0,T;H^{-\alpha})$ while $M(t)$ is an $L^2$-valued continuous local martingale; they are given respectively by
  \begin{equation}\label{sec3 martingale part}
  \aligned
  v(t) &= \int_0^t \big[- \Lambda^{2\alpha} u(s)+ \nu \Delta u(s) + g_R(u(s)) F(u(s)) \big]\, \d s, \\
  M(t) &= \sqrt{C_d\nu}\int_0^t \sum_{k,i} \theta_k\, \sigma_{k,i}\cdot \nabla u(s)\, \d W^{k,i}_s.
  \endaligned
  \end{equation}
Moreover there exists a constant $C=C(\nu,T)$ such that the process $v$ satisfies
  \begin{equation}\label{sec3 bound v}
  \Vert v\Vert_{W^{1,2}H^{-\alpha}} \leq C \big(1+ \Vert u\Vert_{L^\infty L^2}^{\beta_1} \big) (1+\Vert u\Vert_{L^2 H^\alpha}),
  \end{equation}
while the martingale part has quadratic variation
  \begin{equation}\label{sec3 martingale bracket}
  [M](t) = 2\nu\int_0^t \Vert \nabla u(s)\Vert_{L^2}^2\, \d s.
  \end{equation}
\end{lemma}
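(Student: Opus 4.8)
The plan is to read off the decomposition \eqref{sec3 decomposition solution} directly from the weak formulation in the definition of a solution, identifying the drift part with $v$ and the stochastic integral with $M$, and then to establish the two quantitative claims \eqref{sec3 bound v} and \eqref{sec3 martingale bracket} separately. For the decomposition itself, I would simply set $\langle v(t),\phi\rangle$ equal to the two Lebesgue integrals and $\langle M(t),\phi\rangle$ equal to the It\^o integral appearing in the weak formulation; the Stratonovich-to-It\^o conversion via \eqref{key-identity} has already been carried out before the definition, so the drift already carries the extra $\nu\Delta u$ term and no further correction is needed.

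First I would verify the regularity and the bound for $v$. Since $u$ has trajectories in $L^2(0,T;H^\alpha)$, the term $-\Lambda^{2\alpha}u$ lies in $L^2(0,T;H^{-\alpha})$ with norm controlled by $\Vert u\Vert_{L^2H^\alpha}$, and likewise $\nu\Delta u\in L^2(0,T;H^{-\alpha})$ since $\alpha\geq 1$. For the nonlinear term I would invoke hypothesis (H1): using $0\leq g_R\leq 1$ and the growth bound $\Vert F(u)\Vert_{H^{-\alpha}}\lesssim (1+\Vert u\Vert_{L^2}^{\beta_1})(1+\Vert u\Vert_{H^\alpha})$, one gets $\Vert g_R(u)F(u)\Vert_{H^{-\alpha}}\lesssim (1+\Vert u\Vert_{L^\infty L^2}^{\beta_1})(1+\Vert u\Vert_{H^\alpha})$, which is in $L^2(0,T;H^{-\alpha})$ after integrating in time and applying Cauchy--Schwarz against the constant. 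Summing the three contributions shows that the integrand defining $v$ lies in $L^2(0,T;H^{-\alpha})$, hence $v\in W^{1,2}(0,T;H^{-\alpha})$, and collecting the constants yields \eqref{sec3 bound v} with $C=C(\nu,T)$.

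Next I would compute the quadratic variation of the martingale part. Here $M(t)=\sqrt{C_d\nu}\int_0^t\sum_{k,i}\theta_k\,\sigma_{k,i}\cdot\nabla u(s)\,\d W^{k,i}_s$, so the natural route is to expand $[M]$ using the covariation structure \eqref{covariation}, namely $[W^{k,i},W^{l,j}]_t=2t\,\delta_{k,-l}\delta_{i,j}$. The cross terms pair $k$ with $-l$, so using $\sigma_{-k,i}=\overline{\sigma_{k,i}}$ (equivalently $a_{-k,i}=a_{k,i}$) and the symmetry $\theta_k=\theta_{-k}$, the double sum collapses via the key identity \eqref{key-identity}, $\sum_{k,i}\theta_k^2\,\sigma_{k,i}\otimes\overline{\sigma}_{k,i}=\frac{d-1}{d}I_d$. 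Contracting this against $\nabla u\otimes\nabla u$ and multiplying by the factor $C_d\nu\cdot 2=\frac{d}{d-1}\nu\cdot 2$ produces exactly $2\nu\Vert\nabla u(s)\Vert_{L^2}^2$, giving \eqref{sec3 martingale bracket} after integrating in $s$.

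The main obstacle I expect is the careful bookkeeping in the quadratic variation computation: one must track the complex conjugation in $\overline{W^{k,i}}=W^{-k,i}$ together with $\overline{\sigma}_{k,i}=\sigma_{-k,i}$, confirm that the spatial dependence cancels pointwise through \eqref{key-identity} so that $[M]$ is genuinely the scalar process claimed rather than a spatially varying object, and correctly account for the factor of $2$ in the covariation of the complex Brownian motions. Establishing that $M$ is a bona fide $L^2$-valued continuous local martingale (rather than only a sequence of real martingales tested against fixed $\phi$) also requires a standard but nontrivial summation argument, for which I would appeal to the general theory in \cite{RozLot}.
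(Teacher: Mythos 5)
Your proposal is correct and follows essentially the same route as the paper: the drift bound via the mapping properties of $\Lambda^{2\alpha}$, $\Delta$ and hypothesis (H1) with $0\le g_R\le 1$, and the quadratic variation computed from the covariation structure \eqref{covariation} together with the pointwise identity \eqref{key-identity}, which is exactly how the paper obtains \eqref{sec3 bound v} and \eqref{sec3 martingale bracket}. The only detail worth making explicit is the integration by parts $\langle \sigma_{k,i}\cdot\nabla u,\phi\rangle=-\langle u,\sigma_{k,i}\cdot\nabla\phi\rangle$ (using $\div\,\sigma_{k,i}=0$), which identifies the $L^2$-valued integral $M$ with the stochastic integral appearing in the weak formulation; the paper also invokes a localisation so that $\mathbb{E}\int_0^T\Vert u(s)\Vert_{H^\alpha}^2\,\d s<\infty$ before computing $[M]$.
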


\begin{proof}
We can identify $\Lambda^{2\alpha} u$ with an element of $H^{-\alpha}(\T^d)$ by the relation $\langle \Lambda^{2\alpha} u,\phi\rangle = \big\langle \Lambda^{\alpha} u, \Lambda^{\alpha} \phi \big\rangle$; similarly, $\Delta u \in H^{\alpha-2} \subset H^{-\alpha}$. Therefore,
  $$ \int_0^T \| -\Lambda^{2\alpha} u(s)+\nu\Delta u(s) \|_{H^{-\alpha}}^2 \,\d s
  \lesssim (1+\nu^2) \int_0^T \| u(s)\|_{H^\alpha}^2 \d s = (1+\nu^2) \|u\|_{L^2 H^\alpha}^2. $$
Moreover, by hypothesis (H1) it holds
  \begin{align*}
  \int_0^T \Vert F(u(s)) \Vert_{H^{-\alpha}}^2\, \d s
  &\lesssim \int_0^T \big(1+\Vert u(s)\Vert_{H^\alpha}^2 \big) \big(1+ \Vert u(s)\Vert_{L^2}^{2\beta_1} \big)\, \d s \\
  &\lesssim \big(1+\Vert u \Vert_{L^2 H^\alpha}^2 \big) \big(1+ \Vert u \Vert_{L^\infty L^2}^{2\beta_1} \big).
  \end{align*}
Combining the above estimates shows that $v\in W^{1,2}(0,T;H^{-\alpha})$ and satisfies \eqref{sec3 bound v}.

Let us show that $M$ is a well defined $L^2$-valued stochastic integral; up to localisation, we can assume without loss of generality that $\mathbb{E}\big[\int_0^T \Vert u(s)\Vert_{H^\alpha}^2 \d s \big]<\infty$. By the assumption \eqref{covariation} on the noises $\{W^{k,i} \}_{k,i}$ and \eqref{key-identity}, it holds
  \begin{align*}
  \bigg[\sqrt{C_d\nu}\int_0^\cdot \sum_{k,i} \theta_k\, \sigma_{k,i}\cdot \nabla u(s)\, \d W^{k,i}_s \bigg](t)
  & = 2C_d\nu \int_0^t \sum_{k,i} \theta_k^2 \langle \sigma_{k,i}\cdot\nabla u(s), \overline{\sigma}_{k,i}\cdot \nabla u(s)\rangle\, \d s\\
  & = 2C_d\nu \int_0^t \int_{\mathbb{T}^d} \sum_{k,i} \theta_k^2\, \big\vert a_{k,i}\cdot \nabla u(s,x) \big\vert^2\, \d x\d s\\
  & = 2 \nu \int_0^t \int_{\mathbb{T}^d} |\nabla u(s,x)|^2\, \d x\d s
  \end{align*}
which gives \eqref{sec3 martingale bracket} and shows that $M$ is well defined, since
  \[
  \mathbb{E}\big( \Vert M(t)\Vert_{L^2}^2 \big) = \mathbb{E}\big( [M](t)\big)
  = 2\nu\, \mathbb{E} \bigg[\int_0^T \Vert \nabla u(s)\Vert_{L^2}^2\, \d s\bigg] <\infty.
  \]
Finally, by the divergence free property of $\sigma_k$ and integration by parts, it holds
  \begin{equation*}
  \langle M(t),\phi\rangle
  = \sqrt{C_d \nu} \int_0^t \sum_{k,i} \theta_k \langle \sigma_{k,i}\cdot \nabla u(s),\phi\rangle\,\d W^{k,i}_s
  = - \sqrt{C_d\nu} \int_0^t \sum_{k,i} \theta_k \langle u(s),\sigma_{k,i}\cdot \nabla \phi\rangle\,\d W^{k,i}_s
  \end{equation*}
which shows that any solution to \eqref{PDE-cut-off} decomposes as in \eqref{sec3 decomposition solution}.
\end{proof}

The following lemmas will be useful for several results.

\begin{lemma}\label{sec3 lemma compactness}
For any $\beta, \gamma, \varepsilon>0$ and any $p<\infty$, define the spaces
  \begin{equation*}
  \aligned
  \mathcal S &:=L^2(0,T;H^\alpha)\cap C([0,T];L^2)\cap C^\gamma([0,T];H^{-\beta}),\\
  \mathcal X &:= L^2(0,T;H^{\alpha-\eps})\cap L^p(0,T;L^2)\cap C([0,T];H^{-\varepsilon});
  \endaligned
  \end{equation*}
then we have the compact embedding $\mathcal S \hookrightarrow \mathcal X$. Moreover, for any $K \in [0,+\infty)$, the set
\begin{equation*}
\mathcal X_K := \Big\{ f\in \mathcal{X}: \sup_{t\in [0,T]} \Vert f(t)\Vert_{L^2}+\Vert f\Vert_{L^2 H^\alpha} \leq K \Big\}
\end{equation*}
is a closed subset of $\mathcal{X}$ and thus a Polish space with the metric inherited from $\mathcal{X}$.
\end{lemma}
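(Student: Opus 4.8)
The plan is to recognise this as a vector-valued Aubin--Lions--Simon compactness statement and to reduce everything to a single extraction in the weakest component $C([0,T];H^{-\varepsilon})$, from which the remaining two convergences follow by interpolation. Since proving the embedding into a smaller (stronger) target $\mathcal X$ gives the claim for larger parameters by composing with continuous inclusions, we may assume $\varepsilon<\alpha$. Throughout I will freely use the Rellich-type fact that on $\T^d$ the embedding $H^s\hookrightarrow H^{s'}$ is compact whenever $s>s'$, together with the interpolation inequality $\|g\|_{H^{s_\theta}}\le \|g\|_{H^{s_0}}^{1-\theta}\|g\|_{H^{s_1}}^{\theta}$ for $s_\theta=(1-\theta)s_0+\theta s_1$.

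First I would prove the compact embedding. Let $(f_n)\subset\mathcal S$ be bounded. For the component $C([0,T];H^{-\varepsilon})$ I would invoke the Arzel\`a--Ascoli theorem for $H^{-\varepsilon}$-valued functions: pointwise relative compactness holds because $\{f_n(t)\}_n$ is bounded in $L^2$ and $L^2=H^0\hookrightarrow H^{-\varepsilon}$ compactly, while equicontinuity follows from the uniform $C^\gamma([0,T];H^{-\beta})$ bound, interpolated against the uniform $C([0,T];L^2)$ bound to obtain $\|f_n(t)-f_n(s)\|_{H^{-\varepsilon}}\lesssim|t-s|^{\gamma\varepsilon/\beta}$ (the case $\beta\le\varepsilon$ being immediate from $\|\cdot\|_{H^{-\varepsilon}}\le\|\cdot\|_{H^{-\beta}}$). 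This yields a subsequence, still denoted $(f_n)$, with $f_n\to f$ in $C([0,T];H^{-\varepsilon})$.

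For this same subsequence I would upgrade to the two stronger components by interpolation, using the uniform bounds in $L^2(0,T;H^\alpha)$ and $L^\infty(0,T;L^2)$. Writing $g=f_n-f_m$, the inequality $\|g\|_{H^{\alpha-\varepsilon}}\le\|g\|_{H^\alpha}^{1-\lambda}\|g\|_{H^{-\varepsilon}}^{\lambda}$ with $\lambda=\varepsilon/(\alpha+\varepsilon)$, followed by H\"older in time, gives $\|g\|_{L^2H^{\alpha-\varepsilon}}\lesssim\|g\|_{L^2H^\alpha}^{1-\lambda}\|g\|_{L^2H^{-\varepsilon}}^{\lambda}$; the first factor is uniformly bounded and the second tends to $0$, so $(f_n)$ is Cauchy in $L^2(0,T;H^{\alpha-\varepsilon})$. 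Similarly $\|g\|_{L^2}^2\le\|g\|_{H^{\alpha-\varepsilon}}^{2\theta}\|g\|_{H^{-\varepsilon}}^{2(1-\theta)}$ with $\theta=\varepsilon/\alpha$, integrated in time, yields $\|f_n-f\|_{L^2L^2}\to0$; the uniform $L^\infty(0,T;L^2)$ bound then upgrades this to $\|f_n-f\|_{L^pL^2}\to0$ for every $p<\infty$, since $\|h\|_{L^pL^2}^p\le\|h\|_{L^\infty L^2}^{p-2}\|h\|_{L^2L^2}^2$. As all three limits coincide with $f$, the subsequence converges in $\mathcal X$, proving compactness.

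Finally I would show $\mathcal X_K$ is closed. Given $f_n\in\mathcal X_K$ with $f_n\to f$ in $\mathcal X$, the two constraining quantities are only lower semicontinuous, not continuous, for the $\mathcal X$-topology, so I would recover them by weak compactness. For the $L^2H^\alpha$ part, a subsequence realising $\liminf_n\|f_n\|_{L^2H^\alpha}$ is bounded in the Hilbert space $L^2(0,T;H^\alpha)$, hence has a weak limit, which must equal $f$ (identified through the strong $L^2H^{\alpha-\varepsilon}$ convergence), whence $\|f\|_{L^2H^\alpha}\le\liminf_n\|f_n\|_{L^2H^\alpha}$. For the $\sup_t\|\cdot\|_{L^2}$ part, for each fixed $t$ the bound $\|f_n(t)\|_{L^2}\le K$ together with $f_n(t)\to f(t)$ in $H^{-\varepsilon}$ gives, by weak-$L^2$ compactness, $\|f(t)\|_{L^2}\le\liminf_n\|f_n(t)\|_{L^2}$, and taking the supremum over $t$ preserves the inequality against $\liminf_n\sup_t\|f_n(t)\|_{L^2}$. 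Adding the two estimates and using $\liminf a_n+\liminf b_n\le\liminf(a_n+b_n)\le K$ gives $f\in\mathcal X_K$. Since $\mathcal X$ is a separable Banach space (an intersection of separable Banach spaces with the sum norm), it is Polish, and the closed subset $\mathcal X_K$ is then Polish for the inherited metric. The only genuinely delicate point is this last step: the defining bounds of $\mathcal X_K$ live in norms strictly stronger than the $\mathcal X$-topology, so closedness cannot be read off by continuity and must be obtained through the weak lower semicontinuity arguments above; the compact-embedding part itself is routine once the Arzel\`a--Ascoli extraction is in place.
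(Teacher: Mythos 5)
Your proof is correct and follows essentially the same route as the paper's: an Ascoli--Arzel\`a extraction for the $C([0,T];H^{-\varepsilon})$ component, interpolation against the uniform $L^\infty L^2$ and $L^2H^\alpha$ bounds to upgrade to the $L^p(0,T;L^2)$ and $L^2(0,T;H^{\alpha-\varepsilon})$ components, and lower semicontinuity of $\Vert\cdot\Vert_{L^\infty L^2}+\Vert\cdot\Vert_{L^2H^\alpha}$ for the closedness of $\mathcal X_K$. The only differences are cosmetic (the order of the interpolation steps and the fact that you spell out the weak-compactness argument behind the lower semicontinuity, which the paper states as immediate).
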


\begin{proof}
Let $\{f_n \}_n$ be a bounded sequence in $\mathcal S$; then by Ascoli--Arzel\'a theorem, it admits a convergent subsequence (not relabelled for simplicity) $f_n\to f$ in $C([0,T];H^{-\beta})$; by the uniform bound in $C([0,T];L^2)$ and standard interpolation estimates, $f_n\to f$ in $C([0,T];H^{-\varepsilon})$ for any $\varepsilon>0$.  Similarly, by interpolation it holds
  \[
  \int_0^T \Vert f_n(t)-f(t)\Vert_{L^2}^4\, \d t\leq \sup_{t\in [0,T]} \Vert f_n(t)-f(t)\Vert^2_{H^{-\alpha}} \int_0^T \Vert f_n(t)-f(t)\Vert_{H^\alpha}^2 \d t\to 0,
  \]
which shows convergence in $L^4(0,T;L^2)$; convergence in $L^p(0,T;L^2)$ for general $p<\infty$ follows by interpolating this one with the uniform bound in $L^\infty(0,T;L^2)$. Convergence in $L^2(0,T;H^{\alpha-\eps})$ follows from convergence in $L^2(0,T;L^2)$ and the uniform bound in $L^2(0,T;H^\alpha)$.

Regarding the second claim, $\mathcal{X}_K$ being a closed subset of $\mathcal{X}$ follows immediately from the lower semicontinuity of $\Vert \cdot\Vert_{L^\infty L^2} +\Vert\cdot\Vert_{L^2 H^\alpha}$ in the topology of $\mathcal{X}$.
\end{proof}

\begin{lemma}\label{sec3 lemma continuity}
Under assumption (H1), the map $f\mapsto F(f)$ is continuous from $\mathcal{X}_K$ to $L^q(0,T;H^{-\alpha})$ for any $K<\infty$ and any $q\in [1,2)$.
\end{lemma}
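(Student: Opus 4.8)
The plan is to exploit the only form of continuity available for $F$, namely its continuity as a map $H^{\alpha-\eta}\to H^{-\alpha}$ granted by (H1), and to convert pointwise-in-time convergence into $L^q$-in-time convergence by a uniform integrability argument. Since $\mathcal{X}_K$ is a metric space, it suffices to verify sequential continuity, so I would fix $f_n,f\in\mathcal{X}_K$ with $f_n\to f$ in $\mathcal{X}$ and aim to show $F(f_n)\to F(f)$ in $L^q(0,T;H^{-\alpha})$. As the claim concerns convergence of the whole sequence, I would organize the argument around the standard subsequence principle: it is enough to prove that every subsequence of $(f_n)$ admits a further subsequence along which $F(f_n)\to F(f)$ in $L^q(0,T;H^{-\alpha})$.

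First I would extract pointwise (in $t$) convergence. From convergence in $\mathcal{X}$ I have $f_n\to f$ in $C([0,T];H^{-\eps})$, while the uniform bound $\Vert f_n\Vert_{L^2H^\alpha}\leq K$ is built into the definition of $\mathcal{X}_K$. Interpolating $H^{\alpha-\eta}$ between $H^{-\eps}$ and $H^\alpha$ with weight $\lambda=\eta/(\alpha+\eps)\in(0,1)$ then yields $f_n\to f$ in $L^2(0,T;H^{\alpha-\eta})$ (alternatively, assuming $\eps\leq\eta$ and invoking the embedding $H^{\alpha-\eps}\hookrightarrow H^{\alpha-\eta}$). Passing to a further subsequence, $f_n(t)\to f(t)$ in $H^{\alpha-\eta}$ for a.e. $t\in[0,T]$, and the continuity of $F:H^{\alpha-\eta}\to H^{-\alpha}$ from (H1) gives $\Vert F(f_n(t))-F(f(t))\Vert_{H^{-\alpha}}\to 0$ for a.e. $t$.

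Next I would establish uniform integrability of $h_n(t):=\Vert F(f_n(t))-F(f(t))\Vert_{H^{-\alpha}}^q$. Here the growth bound of (H1), together with $\sup_t\Vert f_n(t)\Vert_{L^2}\leq K$, gives $\Vert F(f_n(t))\Vert_{H^{-\alpha}}\lesssim(1+K^{\beta_1})(1+\Vert f_n(t)\Vert_{H^\alpha})$, so that $\int_0^T\Vert F(f_n(t))\Vert_{H^{-\alpha}}^2\,\d t\lesssim T+K^2$ uniformly in $n$, and likewise for $f$. Hence $\{h_n\}$ is bounded in $L^{2/q}(0,T)$ with exponent $2/q>1$ precisely because $q<2$; this is where the restriction $q\in[1,2)$ enters, furnishing the needed uniform integrability on the finite measure space $[0,T]$. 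Combining the a.e. convergence $h_n\to0$ with uniform integrability, the Vitali convergence theorem yields $\int_0^T h_n(t)\,\d t\to0$, that is, convergence in $L^q(0,T;H^{-\alpha})$ along the subsequence; the subsequence principle then upgrades this to convergence of the full sequence.

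The main obstacle is that $F$ is not assumed Lipschitz, nor uniformly continuous on bounded sets in any quantitative way, so a direct estimate of $\Vert F(f_n)-F(f)\Vert_{L^qH^{-\alpha}}$ in terms of $\Vert f_n-f\Vert_{\mathcal X}$ is unavailable; the only leverage is the abstract continuity $H^{\alpha-\eta}\to H^{-\alpha}$ together with the scalar growth control. The difficulty is therefore funneled into the interplay between extracting a.e.-convergent subsequences (using the uniform $H^\alpha$ bound in $\mathcal{X}_K$) and the sharp use of $q<2$ to turn the $L^2$-in-time estimate from (H1) into uniform integrability; the remaining steps are routine.
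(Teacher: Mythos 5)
Your proof is correct and follows essentially the same route as the paper's: both pass from convergence in $\mathcal{X}_K$ to convergence of $F(f_n(t))$ in $H^{-\alpha}$ pointwise in $t$ (the paper phrases this as convergence in measure, you via the subsequence principle and a.e. convergence, which is equivalent), then combine the uniform $L^2(0,T;H^{-\alpha})$ bound from (H1) with Vitali's theorem to conclude in $L^q$ for $q<2$. The only cosmetic difference is that the paper simply takes $\eps<\eta$ and uses the embedding $H^{\alpha-\eps}\hookrightarrow H^{\alpha-\eta}$ together with the $L^2(0,T;H^{\alpha-\eps})$ component of the $\mathcal{X}$-topology, whereas you also offer an interpolation argument; both work.
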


\begin{proof}
First of all, by the definition of $\mathcal{X}_K$ and assumption (H1), $F(f)\in L^2 H^{-\alpha}$ for any $f\in\mathcal{X}_K$. Now fix $K<\infty$ and consider a sequence $f_n\to f$ in $\mathcal{X}_K$; in particular $f_n\to f$ in $L^2(0,T; H^{\alpha-\eps})$, which combined with the continuity of $F$ (taking $\eps<\eta$) implies that $F(f_n)$ converge in measure to $F(f)$ in $H^{-\alpha}$. We have the estimate
\begin{align*}
\Vert F(f_n)\Vert_{L^2 H^{-\alpha}}^2
= \int_0^T \Vert F(f_n(t))\Vert_{H^{-\alpha}}^2 \,\d t
\lesssim \int_0^T (1+\Vert f_n(t)\Vert_{L^2}^{2\beta_1}) (1+\Vert f_n(t)\Vert_{H^\alpha}^2) \,\d t
\lesssim C_{K,T},
\end{align*}
which shows that the sequence $\{F(f_n)\}_n$ is uniformly bounded in $L^2(0,T; H^{-\alpha})$. Convergence in $L^q(0,T; H^{-\alpha})$ for any $q<2$ then follows from an application of Vitali's theorem.
\end{proof}

Before showing the well posedness of the stochastic equation \eqref{PDE-cut-off} with cut-off, we make the following simple observation.

\begin{remark}\label{rem-hypo-H2}
By the interpolation inequality
  \[ \| u\|_{L^2} \leq \| u\|_{H^\alpha}^{\frac{\delta}{\alpha + \delta}}  \| u\|_{H^{-\delta}}^{\frac{\alpha}{\alpha +\delta}} ,  \]
condition (H2) also implies
  \[ \aligned | \langle F(u),u\rangle |
  &\lesssim \big(1+\| u\|_{H^\alpha}^{\gamma_2} \big) \Big(1+\| u\|_{H^\alpha}^{\beta_2 \frac{\delta}{\alpha+\delta}} \Big) \Big(1+\| u\|_{H^{-\delta}}^{\beta_2\frac{\alpha}{\alpha+\delta}} \Big) \\
  &\lesssim \Big(1 + \| u\|_{H^\alpha}^{\gamma_2 + \beta_2 \frac{\delta}{\alpha + \delta}} \Big) \Big(1+ \| u \|_{H^{-\delta}}^{\beta_2 \frac{\alpha}{\alpha + \delta}} \Big);
  \endaligned \]
therefore choosing $\delta$ sufficiently small, we can assume:
\begin{itemize}
\item[\rm(H2$'$)] There exist some $\tilde{\beta}_2>0$ and $\tilde{\gamma}_2 <2$ such that
  $$ | \langle F(u),u \rangle | \lesssim \big(1 + \| u\|_{H^\alpha}^{\tilde{\gamma}_2} \big) \big(1+ \| u\|_{H^{-\delta}}^{\tilde{\beta}_2} \big) .$$
\end{itemize}
With a slight abuse of notations we will still denote the parameters by $\beta_2$ and $\gamma_2$. From now on, when dealing with the equation \eqref{PDE-cut-off} with cut-off, we will always work with small $\delta>0$ such that {\rm (H2$'$)} holds.
\end{remark}

\begin{proposition}\label{prop-existence}
For any $\theta\in\ell^2$ and any $u_{0}\in L^{2}(  \mathbb{T}^{d})$, equation \eqref{PDE-cut-off} has a pathwise unique global strong solution $u$ with trajectories in $C\big([0,T]; L^2(\mathbb{T}^d) \big)\cap L^2\big(0,T; H^\alpha(\mathbb{T}^d) \big)$, satisfying
  \begin{equation}\label{sec3 energy bound}
  \P\mbox{-a.s.}, \quad \sup_{t\in [0,T]} \Vert u(t)\Vert_{L^2}^2 + \int_0^T \Vert \Lambda^{\alpha} u(t)\Vert_{L^2}^2\, \d t\leq C_1\big(1+ \Vert u_0\Vert_{L^2}^2 \big)
  \end{equation}
for some deterministic constant $C_1=C_1(T,\delta,R)$. Furthermore, for any $p>1$, $\beta>p+1$ and $\gamma<(p-1)/(2p)$, there exists another constant $C_2$, depending on all the above parameters but independent of $\|\theta\|_{\ell^2}$, such that the associated martingale part $M$ satisfies
  \begin{equation}\label{sec3 estimate martingale}
  \mathbb{E}\Bigg[\bigg(\sup_{0\leq s<t\leq T} \frac{\Vert M(t)-M(s)\Vert_{H^{-\beta}}}{|t-s|^\gamma}\bigg)^{2p}\Bigg]\leq C_2\, \Vert \theta\Vert_{\ell^\infty}^{2p}\, (1+\Vert u_0\Vert_{L^2}^2)^p.
\end{equation}
\end{proposition}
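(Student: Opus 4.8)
The plan is to prove the three assertions in turn: existence and pathwise uniqueness, the a priori energy bound \eqref{sec3 energy bound}, and the H\"older martingale estimate \eqref{sec3 estimate martingale}. For the first, I would cast \eqref{PDE-cut-off} into the variational framework for SPDEs with the Gelfand triple $H^\alpha\hookrightarrow L^2\hookrightarrow H^{-\alpha}$, drift $A(u)=-\Lambda^{2\alpha}u+\nu\Delta u+g_R(u)F(u)$ and diffusion $B(u)=\sqrt{C_d\nu}\sum_{k,i}\theta_k\,\sigma_{k,i}\cdot\nabla u$, and then invoke the abstract local monotonicity existence and uniqueness theorem (e.g. \cite{LiuRoc2015, PR07}). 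Hemicontinuity and the growth bound $\Vert A(u)\Vert_{H^{-\alpha}}\lesssim(1+\Vert u\Vert_{L^2}^{\beta_1})(1+\Vert u\Vert_{H^\alpha})$ follow from (H1). Coercivity and local monotonicity both rest on the algebraic identity behind \eqref{sec3 martingale bracket}: the Hilbert--Schmidt functional of $B(u)-B(v)$ equals $2\nu\Vert\nabla(u-v)\Vert_{L^2}^2$, which exactly cancels the contribution $-2\nu\Vert\nabla(u-v)\Vert_{L^2}^2$ of the It\^o term $\nu\Delta u$. What survives is the genuine dissipation $-2\Vert\Lambda^{\alpha}(u-v)\Vert_{L^2}^2$, against which the nonlinearity is absorbed: writing $g_R(u)F(u)-g_R(v)F(v)=g_R(u)(F(u)-F(v))+(g_R(u)-g_R(v))F(v)$, the first piece is controlled by (H3) and Young's inequality (the constraints $\gamma_3<2$, $\beta_3+\gamma_3\geq2$, $\gamma_3+\kappa\leq2$ being precisely what lets one absorb $\Vert u-v\Vert_{H^\alpha}^{\gamma_3}$ into $\Vert\Lambda^{\alpha}(u-v)\Vert_{L^2}^2$), while the second uses the Lipschitz continuity of $g_R$ in $H^{-\delta}$ together with (H1). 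Coercivity is the same computation with $v=0$, where (H2$'$) and the cut-off give $g_R(u)\langle F(u),u\rangle\leq C(R)(1+\Vert u\Vert_{H^\alpha}^{\gamma_2})$ with $\gamma_2<2$.

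For the energy bound I would apply the It\^o formula for the square norm in the variational setting. The stochastic integral $2\langle u,\d M\rangle$ vanishes identically, since each term $\langle u,\sigma_{k,i}\cdot\nabla u\rangle=-\tfrac12\langle u^2,\nabla\cdot\sigma_{k,i}\rangle=0$ by the divergence-free property of $\sigma_{k,i}$; thus the estimate is in fact pathwise. Using \eqref{sec3 martingale bracket}, the bracket contributes $+2\nu\Vert\nabla u\Vert_{L^2}^2$, which cancels the $-2\nu\Vert\nabla u\Vert_{L^2}^2$ coming from the It\^o drift, leaving $\tfrac{\d}{\d t}\Vert u\Vert_{L^2}^2=-2\Vert\Lambda^{\alpha}u\Vert_{L^2}^2+2g_R(u)\langle F(u),u\rangle$. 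Bounding the last term by (H2$'$) and the cut-off as above, then using Young's inequality to absorb a fraction of $\Vert\Lambda^{\alpha}u\Vert_{L^2}^2$, yields $\tfrac{\d}{\d t}\Vert u\Vert_{L^2}^2\leq-\Vert\Lambda^{\alpha}u\Vert_{L^2}^2+C(R)(1+\Vert u\Vert_{L^2}^2)$, and Gronwall's inequality together with integration in time gives \eqref{sec3 energy bound}.

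For the martingale estimate I would first bound the $H^{-\beta}$-increments of $M$ by the Burkholder--Davis--Gundy inequality, reducing matters to the instantaneous intensity $Q(r)=\sum_{k,i}\theta_k^2\Vert\sigma_{k,i}\cdot\nabla u(r)\Vert_{H^{-\beta}}^2$ (up to the factor $2C_d\nu$), so that $\E\Vert M(t)-M(s)\Vert_{H^{-\beta}}^{2p}\lesssim\E[(\int_s^t Q(r)\,\d r)^p]$. Passing to Fourier series and exploiting the orthogonality $a_{k,i}\cdot k=0$ to rewrite $a_{k,i}\cdot\ell=a_{k,i}\cdot(\ell+k)$, one gets $\Vert\sigma_{k,i}\cdot\nabla u\Vert_{H^{-\beta}}^2\lesssim\sum_\ell(1+|\ell+k|^2)^{1-\beta}|\hat u(\ell)|^2$; estimating $\theta_k^2\leq\Vert\theta\Vert_{\ell^\infty}^2$ and summing over $k$ (here $\beta>p+1$ ensures convergence of $\sum_k(1+|\ell+k|^2)^{1-\beta}$, cf. \cite{Gal}) produces $Q(r)\lesssim\nu\Vert\theta\Vert_{\ell^\infty}^2\Vert u(r)\Vert_{L^2}^2$. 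The key point is that this control passes through $\Vert u\Vert_{L^2}^2$, for which \eqref{sec3 energy bound} furnishes an almost sure, time-uniform bound; hence $Q$ is pathwise bounded and $\E\Vert M(t)-M(s)\Vert_{H^{-\beta}}^{2p}\lesssim\Vert\theta\Vert_{\ell^\infty}^{2p}(1+\Vert u_0\Vert_{L^2}^2)^p|t-s|^p$. The Kolmogorov continuity theorem then upgrades this pointwise-in-time moment bound to the seminorm estimate \eqref{sec3 estimate martingale}, the range $\gamma<(p-1)/(2p)$ being exactly the Kolmogorov exponent.

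The delicate step is this last one, and specifically the extraction of the factor $\Vert\theta\Vert_{\ell^\infty}^{2p}$ with a constant $C_2$ that is \emph{uniform over all admissible $\theta$}, in particular independent of the size of the support of $\theta$. This is what forces the use of the orthogonality $a_{k,i}\cdot k=0$ and the summability threshold on $\beta$, and it is the quantitative input that will later make the martingale part negligible in the scaling limit $\Vert\theta\Vert_{\ell^\infty}\to0$. By comparison, the energy bound is immediate once one observes the vanishing of $2\langle u,\d M\rangle$ and the bracket cancellation, and existence and uniqueness are routine given the variational machinery.
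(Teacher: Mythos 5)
Your proposal is correct in substance, and for two of the three assertions it coincides with the paper's argument: the energy bound is obtained in both cases from the It\^o formula for $\Vert u\Vert_{L^2}^2$, the vanishing of the stochastic integral by the divergence-free property, the exact cancellation of the bracket $2\nu\Vert\nabla u\Vert_{L^2}^2$ against the It\^o correction $\nu\Delta u$, and (H2$'$) plus Young and Gronwall; the martingale estimate is in both cases a quadratic-variation bound exploiting that $\{\sigma_{k,i}\}$ is an (incomplete) orthonormal system to pull out $\Vert\theta\Vert_{\ell^\infty}^2$, followed by Burkholder--Davis--Gundy and Kolmogorov's continuity theorem, with the a.s.\ (pathwise) nature of \eqref{sec3 energy bound} being what yields the deterministic factor $(1+\Vert u_0\Vert_{L^2}^2)^p$. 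Where you genuinely diverge is the well-posedness part: you invoke the abstract locally monotone variational existence and uniqueness theorem of Liu--R\"ockner directly on the Gelfand triple $H^\alpha\hookrightarrow L^2\hookrightarrow H^{-\alpha}$, whereas the paper runs an explicit Galerkin scheme, establishes the a priori and martingale estimates at the finite-dimensional level, obtains a weak solution by tightness in $\mathcal X_K$ and Skorokhod representation (with Lemma \ref{sec3 lemma continuity} handling convergence of the nonlinearity), proves pathwise uniqueness by the same $I_1+I_2$ splitting of $g_R(u)F(u)-g_R(v)F(v)$ that you describe, and concludes by Yamada--Watanabe. Your route is shorter and is indeed the one the hypotheses were designed for (the paper says (H1)--(H3) are ``partly inspired by the variational method''), at the cost of having to verify that the cut-off $g_R(\Vert\cdot\Vert_{H^{-\delta}})$ and the gradient-type noise fit the abstract conditions, and of establishing \eqref{sec3 estimate martingale} directly on the limit solution rather than on the approximations; the paper's route is longer but self-contained and produces the compactness infrastructure (Lemmas \ref{sec3 lemma compactness}, \ref{sec3 lemma continuity}) reused verbatim in Proposition \ref{claim SPDE}. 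One small quantitative remark: your Fourier-side summation requires $\sum_{k}(1+|\ell+k|^2)^{1-\beta}<\infty$, i.e.\ $\beta>1+d/2$, while the paper's test-function version requires $\sum_k(1+|k|^2)^{-\beta}|k|^{2p}<\infty$, i.e.\ $\beta>p+d/2$; both agree with the stated threshold $\beta>p+1$ in $d=2$, and in $d=3$ both computations silently need $\beta$ slightly larger, so this is a shared cosmetic issue rather than a gap in your argument.
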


\begin{proof}
We divide the proof in several steps.

\textbf{Step 1} (Galerkin approximations and a priori estimates). For $N\in \N$, let $H_N$ be a finite dimensional subspace of $H= L^2(\T^d)$ spanned by $e^{2\pi {\rm i} k\cdot x},\, |k|\leq N$, and $\Pi_N: H\to H_N$ the orthogonal projection. Consider the finite dimensional SDE:
  \begin{equation*}
  \aligned
  \d u_N(t) &= \big[-\Lambda^{2\alpha} u_N(t)+ \nu \Delta u_N(t) + g_R(u_N(t))\Pi_N F(u_N(t)) \big]\,\d t \\
  &\quad\, - \sqrt{C_d\nu}\sum_{k,i} \theta_k \Pi_N (\sigma_{k,i} \cdot \nabla u_N(t))\, \d W^{k,i}_t
  \endaligned
  \end{equation*}
with initial condition $u_N(0)=\Pi_N u_0$. Note that the norms $\|\cdot \|_{L^2}$ and $\|\cdot \|_{H^s}$ in $H_N$ are equivalent; the Lipschitz regularity of $g_R: [0,\infty) \to [0,1]$ implies that $g_R(\|u_N\|_{H^{-\delta}})$ is also Lipschitz continuous in $u_N$. Using the hypotheses (H1) and (H3), one can show that the above SDE has continuous coefficients satisfying local monotonicity condition on $H_N$; therefore, local existence of solutions is granted (cf. \cite[Theorem 3.1.1]{PR07}). Moreover, It\^o's formula implies
  \begin{align*}
  \d \Vert u_N(t)\Vert_{L^2}^2
  & = 2\langle u_N(t),\d u_N(t)\rangle +\d [u_N](t)\\
  & = -2\|\Lambda^{\alpha} u_N(t)\|_{L^2}^2 \,\d t -2\nu \Vert \nabla  u_N(t)\Vert_{L^2}^2 \,\d t+ 2g_R(u_N(t)) \langle F(u_N(t)),u_N(t)\rangle\,\d t\\
  & \quad -2 \sqrt{C_d\nu}\sum_{k,i} \theta_k \<u_N(t), \Pi_N (\sigma_{k,i} \cdot \nabla u_N(t))\>\, \d W^{k,i}_t\\
  &\quad + 2C_d\nu\, \sum_{k,i} \theta_k^2 \Vert \Pi_N(\sigma_{k,i}\cdot \nabla u_N(t))\Vert_{L^2}^2 \,\d t.
  \end{align*}
Note that the martingale part vanishes because
  $$\<u_N(t), \Pi_N (\sigma_{k,i} \cdot \nabla u_N(t))\> = \<u_N(t), \sigma_{k,i} \cdot \nabla u_N(t)\> =0, $$
where we have used the divergence free property of $\sigma_{k,i}$. By \eqref{key-identity},  it holds
  \[
  2C_d \nu \sum_{k,i} \theta_k^2 \Vert \Pi_N(\sigma_{k,i} \cdot \nabla u_N(t))\Vert_{L^2}^2
  \leq 2C_d\nu \sum_{k,i} \theta_k^2 \Vert \sigma_{k,i} \cdot \nabla u_N(t)\Vert_{L^2}^2
  = 2\nu \Vert \nabla u_N(t)\Vert_{L^2}^2,
  \]
which implies that
  \begin{equation}\label{prop-existence.1}
  \frac{\d}{\d t} \Vert u_N(t)\Vert_{L^2}^2
  \leq -2 \Vert \Lambda^{\alpha} u_N(t)\Vert_{L^2}^2 + 2g_R(u_N(t)) \langle F(u_N(t)),u_N(t)\rangle.
  \end{equation}
Now, by hypothesis (H2$'$) in Remark \ref{rem-hypo-H2} and the definition of $g_R$, we have
  $$\aligned
  g_R(u_N(t)) |\langle F(u_N(t)), u_N(t)\rangle|
&\lesssim g_R(u_N(t)) \big(1+ \|u_N(t) \|_{H^\alpha}^{\gamma_2} \big) \big(1+ \|u_N(t) \|_{H^{-\delta}}^{\beta_2} \big)\\
& \lesssim \big(1+  R^{\beta_2} \big) \big(1+ \|u_N(t) \|_{H^\alpha}^{\gamma_2} \big) \\
  & \leq 1+ R^{\beta_2} +C\big( 1+ R^{\beta_2} \big)^{\frac{2}{2-\gamma_2}} + \frac1{2^\alpha} \|u_N(t) \|_{H^\alpha}^2 ,
  \endaligned $$
where in the last passages we used Young's inequality and the fact that $\gamma_2 <2$. Note that $\|u_N(t) \|_{H^\alpha}^2\leq 2^{\alpha-1} \big( \|u_N(t) \|_{L^2}^2 + \|\Lambda^\alpha u_N(t) \|_{L^2}^2 \big)$; substituting this estimate into \eqref{prop-existence.1} yields
  $$\frac{\d}{\d t} \Vert u_N(t)\Vert_{L^2}^2 \leq -\Vert \Lambda^{\alpha} u_N(t)\Vert_{L^2}^2 + C_{R,\beta_2,\gamma_2} + \|u_N(t) \|_{L^2}^2.$$
As a result, we can find a constant $C_1$ such that, $\mathbb{P}$-a.s.,
  \begin{equation}\label{prop-existence.2}
  \sup_{t\in [0,T]} \Vert u_N(t)\Vert_{L^2}^2 + \int_0^T \Vert \Lambda^{\alpha} u_N(t)\Vert_{L^2}^2\, \d t
  \leq C_1 \big(1+\Vert u_N(0)\Vert^2_{L^2} \big) \leq C_1 \big(1+\Vert u_0 \Vert^2_{L^2} \big).
  \end{equation}

We now pass to estimating the martingale term $M_N$ given by
  \[
  M_N(t) = \sqrt{C_d \nu} \int_0^t \sum_{k,i} \theta_k\, \Pi_N(\sigma_{k,i}\cdot \nabla u_N(s))\, \d W^{k,i}_s.
  \]
For any $\phi\in C^\infty(\mathbb{T}^2)$, $\langle M_N,\phi\rangle$ is a real valued martingale with quadratic variation satisfying
  \begin{align*}
  [\langle M_N,\phi\rangle](t)-[\langle M_N,\phi\rangle](s)
  & = C_d\nu\, \bigg[ \sum_{k,i} \theta_k \int_s^\cdot \langle u_N(r),\sigma_{k,i} \cdot \Pi_N\nabla\phi\rangle\, \d W^{k,i}_r \bigg](t)\\
  & = 2C_d \nu \sum_{k,i} \theta^2_k \int_s^t |\langle u_N(r),\sigma_{k,i} \cdot \Pi_N\nabla\phi\rangle|^2\, \d r,
  \end{align*}
where the second step follows from \eqref{covariation}. We have
  \begin{align*}
  [\langle M_N,\phi\rangle](t)-[\langle M_N,\phi\rangle](s) & \leq 2C_d\nu\Vert \theta\Vert_{\ell^\infty}^2 \int_s^t  \sum_{k,i} |\langle  \sigma_{k,i}, u_N(r) \Pi_N \nabla\phi\rangle|^2\, \d r\\
  & \leq 2C_d \nu\Vert \theta\Vert_{\ell^\infty}^2 \int_s^t \Vert u_N(r)\Pi_N\nabla \phi\Vert_{L^2}^2\, \d r\\
  & \leq 2C_d \nu C_1 \Vert \theta\Vert_{\ell^\infty}^2 \Vert \Pi_N\nabla \phi\Vert_{L^\infty}^2 \big(1+\Vert u_0\Vert_{L^2}^2 \big) |t-s| ,
  \end{align*}
where in the second line we used the fact that $\{\sigma_{k,i} \}_{k,i}$ is an (incomplete) orthonormal system in $L^2(\mathbb{T}^d; \mathbb{R}^d)$. As a consequence, for any $p>1$ and $\beta>1+p$, we can use the Jensen and Burkholder-Davis-Gundy inequalities to estimate $\mathbb{E} \big[\Vert M_N(t)-M_N(s)\Vert_{H^{-\beta}}^{2p} \big]$ as follows:
  \begin{align*}
  \mathbb{E}\big[\Vert M_N(t)-M_N(s)\Vert_{H^{-\beta}}^{2p} \big]
  & \lesssim \sum_k (1+|k|^2)^{-\beta} \mathbb{E} \big[|\langle M_N(t)-M_N(s),e_k\rangle|^{2p} \big]\\
  & \lesssim \Vert \theta\Vert_{\ell^\infty}^{2p} \big(1+\Vert u_0\Vert_{L^2}^2 \big)^p |t-s|^p \sum_k (1+|k|^2)^{-\beta} |k|^{2p}\\
  & \lesssim \Vert \theta\Vert_{\ell^\infty}^{2p} \,\big(1+\Vert u_0\Vert_{L^2}^2 \big)^p\, |t-s|^p,
  \end{align*}
where $e_k(x)= e^{2\pi {\rm i} k\cdot x}$ and we used the fact $\Vert \Pi_N \nabla e_k \Vert_{L^\infty} \lesssim |k|$; the sum over $k$ is convergent since $\beta>p+1$. Estimate \eqref{sec3 estimate martingale} with $M$ replaced by $M_N$ then readily follows from an application of Kolmogorov's continuity modification theorem.

\textbf{Step 2} (weak existence). Note that $u_N=u_N(0)+ v_N+M_N$, where
  $$v_N(t) = \int_0^t \big[-\Lambda^{2\alpha} u_N(s)+ \nu \Delta u_N(s) + g_R(u_N(s)) F(u_N(s)) \big]\,\d s $$
satisfies, similarly to \eqref{sec3 bound v},
  \begin{equation*}
  \Vert v_N\Vert_{C^{1/2 } H^{-\alpha}} \leq \Vert v_N\Vert_{W^{1,2} H^{-\alpha}} \lesssim \big(1+\Vert u_N\Vert_{L^\infty L^2}^{\beta_1} \big) (1+\Vert u_N\Vert_{L^2 H^\alpha}).
  \end{equation*}
Combining this fact with the above estimates \eqref{prop-existence.2}, we deduce that there exist $p>1$, $\beta,\gamma>0$ such that
  \begin{equation*}
  \sup_{N\geq 1} \mathbb{E}\big[ ( \Vert u_N\Vert_{L^2 H^\alpha} + \Vert u_N\Vert_{L^\infty L^2} + \Vert u_N\Vert_{C^\gamma H^{-\beta}} )^p \big] <\infty.
  \end{equation*}
Let $\mu_N$ be the law of $u_N,\, N\geq 1$; then by Lemma \ref{sec3 lemma compactness} and Prokhorov's theorem (see \cite[p.59, Theorem 5.1]{Billingsley}), the family $\{\mu_N \}_N$ is tight in $\mathcal X$. By estimate \eqref{prop-existence.2}, we can find $K$ large enough such that the laws $\{ \mu_N \}_N$ are all supported on $\mathcal{X}_K$, thus tight therein as well.

The existence of a weak solution then follows from by now classical arguments, pioneered in this setting in \cite{FlaGat}, based on an application of Skorokhod's representation theorem (see \cite[p.70, Theorem 6.7]{Billingsley}) and Skorokhod's results on convergence of stochastic integrals. We refrain here from giving the complete details, which can also be found in \cite[Section 3]{FGL}, and only give the main ideas.

Writing $W=(W^{k,i}_\cdot )_{k,i}=\big\{(W^{k,i}_t)_{0\leq t\leq T} : k\in \mathbb{Z}^d_0, i=1,\ldots, d-1 \big\}$ and denoting by $P_N$ the joint law of the pair $(u_N,W)$, the sequence $\{P_N\}_N$ is tight in
  \[ \mathcal{X}_K\times \mathcal{Y} = \mathcal{X}_K\times C\big([0,T];\mathbb{C}^{\mathbb{Z}^2_0} \big);   \]
therefore we can find another probability space $\big(\tilde{\Omega},\tilde{F},(\tilde{F}_t),\tilde{\mathbb{P}} \big)$ and a sequence $\{(\tilde{u}_N, \tilde{W}_N) \}_N$ such that $(\tilde{u}_N, \tilde{W}_N)\to (\tilde{u},\tilde{W})$ $\tilde{\mathbb{P}}$-a.s. in $\mathcal{X}_K \times \mathcal{Y}$ and $(\tilde{u}_N,\tilde{W}_N)$ is distributed according to $P_N$. It follows that $\tilde{u}_N= u_N(0) + \tilde{v}_N +\tilde{M}_N$ solves the $N$-step SDE associated to $\tilde{W}_N$ and satisfies the same a priori estimates as $u_N$, $M_N$. By the $\tilde{\P}$-a.s. convergence, it can be shown that $\tilde{u}$ is a solution to \eqref{PDE-cut-off} with the Brownian motions $\tilde{W}$; in particular convergence of the nonlinear term follows from $F$ being continuous on $\mathcal{X}_K$ by Lemma \ref{sec3 lemma continuity}.  Estimates \eqref{sec3 energy bound} and \eqref{sec3 estimate martingale} then follow from the analogous ones for $\tilde{u}_N$, $\tilde{M}_N$.

\textbf{Step 3} (pathwise uniqueness and strong existence). Let now $u_1= u_0+ v_1+M_1$, $u_2=u_0+ v_2+M_2$ be two solutions defined on the same probability space, with respect to the same Brownian motions $\{W^k\}_k$ and initial data $u_0\in L^2$; moreover, both $u_1$ and $u_2$ fulfil the bound \eqref{sec3 energy bound}. Then, the difference $\tilde{u}= u_1-u_2= \tilde{v}+ \tilde{M}$ satisfies, for any $\phi \in H^\alpha(\T^2)$,
  \begin{equation*}\begin{split}
  \langle \tilde{u}(t),\phi\rangle
  = & - \int_0^t \langle \Lambda^{\alpha} \tilde{u}(s), \Lambda^{\alpha}\phi \rangle\, \d s - \nu \int_0^t \langle \nabla \tilde{u}(s),\nabla\phi\rangle\, \d s \\
  & + \int_0^t  \langle g_R(u_1(s)) F(u_1(s))-g_R(u_2(s)) F(u_2(s)),\phi \rangle\, \d s\\
  & - \sqrt{C_d\nu} \sum_{k,i} \int_0^t \theta_k \langle \tilde{u}(s),\sigma_{k,i}\cdot \nabla\phi\rangle \,\d W^{k,i}_s.
  \end{split}\end{equation*}
By the same computations as in Lemma \ref{sec3 lemma structure solutions}, the martingale part $\tilde{M}$ has quadratic variation $[\tilde{M}]=2\nu\int_0^\cdot \Vert \nabla \tilde u(s)\Vert_{L^2}^2\, \d s $. We can therefore invoke the It\^o formula in \cite[Theorem 2.13]{RozLot} to deduce that
  \begin{align*}
  \d \Vert \tilde{u}(t)\Vert_{L^2}^2& = - 2\|\Lambda^{\alpha} \tilde u(t)\|_{L^2}^2\, \d t - 2\nu \Vert \nabla\tilde{u}(t)\Vert_{L^2}^2\, \d t \\
  &\quad + 2\big\langle g_R(u_1(t)) F(u_1(t))-g_R(u_2(t)) F(u_2(t)), \tilde{u}(t) \big\rangle\, \d t\\
  & \quad +2\sqrt{C_d\nu} \sum_{k,i} \theta_k \langle \tilde{u}(t),\sigma_{k,i} \cdot \nabla\tilde{u}(t) \rangle\,\d W^{k,i}_t
  + 2\nu \Vert \nabla \tilde u(t)\Vert_{L^2}^2\,\d t,
  \end{align*}
where the last term comes from the quadratic variation of $\tilde{M}$. Using the fact that the vector fields $\sigma_{k,i}$ are divergence free, we arrive at
  \begin{equation} \label{uniqueness.1}
  \d \Vert \tilde{u}(t)\Vert_{L^2}^2 = -2\|\Lambda^{\alpha} \tilde u(t)\|_{L^2}^2\, \d t + 2\big\langle g_R(u_1(t)) F(u_1(t))-g_R(u_2(t)) F(u_2(t)), \tilde{u}(t) \big\rangle \, \d t.
  \end{equation}
The key is to estimate the second term which is dominated by
  $$ 2|g_R(u_1(t)) - g_R(u_2(t))| \, |\<F(u_1(t)), \tilde u(t)\>| + g_R(u_2(t)) |\<F(u_1(t))- F(u_2(t)), \tilde{u}(t) \>| =: I_1 + I_2. $$

First, we estimate $I_1$. The definition of $g_R$ leads to
  $$\aligned
  |g_R(u_1(t)) - g_R(u_2(t))|
  \leq \|g'_R\|_\infty \big|\|u_1(t) \|_{H^{-\delta}} - \|u_2(t) \|_{H^{-\delta}}\|\big|
  \lesssim \|u_1(t)- u_2(t)\|_{H^{-\delta}}
   \lesssim \|\tilde u(t)\|_{L^2}.
  \endaligned $$
Recall that, $\P$-a.s., both solutions $u_i$ satisfy the deterministic bound \eqref{sec3 energy bound};  therefore by (H1),
  $$\aligned
  |\<F(u_1(t)), \tilde u(t)\>|
  &\lesssim \big(1 + \|u_1(t) \|_{L^2}^{\beta_1} \big) (1+\|u_1(t)\|_{H^\alpha}) \|\tilde u(t) \|_{H^\alpha}
  \lesssim (1+\|u_1(t)\|_{H^\alpha}) \|\tilde u(t) \|_{H^\alpha}.
  \endaligned $$
As a result,
\begin{align}
I_1 & \lesssim (1+\|u_1(t)\|_{H^\alpha}) \|\tilde u(t)\|_{L^2} \|\tilde u(t) \|_{H^\alpha} \nonumber \\
& \leq C (1+\|u_1(t)\|_{H^\alpha}^2)\|\tilde u(t)\|_{L^2}^2 + \frac{1}{2^\alpha}\|\tilde u(t) \|_{H^\alpha}^2 \nonumber \\
& \leq C' (1+\|u_1(t)\|_{H^\alpha}^2)\|\tilde u(t)\|_{L^2}^2 + \frac{1}{2}\|\Lambda^{\alpha} \tilde u(t) \|_{L^2}^2  \label{estimate-1}
\end{align}
where we used the Cauchy inequality and the fact that $\Vert \tilde{u}\Vert_{H^\alpha}^2\leq 2^{\alpha-1} \big( \Vert \tilde{u}\Vert_{L^2}^2 +\Vert \Lambda^{\alpha} \tilde{u}\Vert_{L^2}^2 \big)$.

We now turn to deal with $I_2$. The hypothesis (H3) and the uniform bound \eqref{sec3 energy bound} immediately give us
  $$\aligned
  I_2 &\lesssim \|\tilde u(t) \|_{H^\alpha}^{\gamma_3} \|\tilde u(t) \|_{L^2}^{\beta_3} \big(1+ \|u_1(t) \|_{H^\alpha}^\kappa + \|u_2(t) \|_{H^\alpha}^\kappa \big)\\
  &\leq \frac1{2^\alpha} \|\tilde u(t) \|_{H^\alpha}^2 + C \|\tilde u(t) \|_{L^2}^{2\beta_3/(2-\gamma_3)} \Big(1+ \|u_1(t) \|_{H^\alpha}^{2\kappa/(2-\gamma_3)} + \|u_2(t) \|_{H^\alpha}^{2\kappa/(2-\gamma_3)} \Big)
  \endaligned $$
since $\gamma_3<2$. By (H3) we have $2\beta_3/(2-\gamma_3) \geq 2$, $2\kappa/(2-\gamma_3)\leq 2$; using again the bound \eqref{sec3 energy bound}, we obtain
  $$I_2 \leq \frac12\|\Lambda^{\alpha} \tilde u(t) \|_{L^2}^2 + C' \Vert \tilde{u}(t)\Vert_{L^2}^2 \big(1+ \|u_1(t) \|_{H^\alpha}^2 + \|u_2(t) \|_{H^\alpha}^2 \big). $$
Combining this estimate with  \eqref{uniqueness.1} and \eqref{estimate-1} yields
  $$\d \Vert \tilde{u}(t)\Vert_{L^2}^2 \lesssim (1+ \|u_1(t)\|_{H^\alpha}^2 + \|u_2(t) \|_{H^\alpha}^2 ) \|\tilde u(t)\|_{L^2}^2 \,\d t, \quad \Vert \tilde{u}(0)\Vert_{L^2}=0. $$
As the sum in the parentheses on the right-hand side is integrable on $[0,T]$, we conclude that $\Vert \tilde{u}(t)\Vert_{L^2} \equiv 0$, which implies pathwise uniqueness. Strong existence then follows from an application of Yamada--Watanabe theorem, for instance in the version given in \cite{Rock1} for the choice $U=H=L^2$, $V=H^\alpha$, $E=H^{-\alpha}$.
\end{proof}

We are now ready to prove the following intermediate result.

\begin{proposition}\label{claim SPDE}
Consider a sequence $\{u^N_0\}_{N\geq 1}\subset L^2(\mathbb{T}^d)$ converging weakly in $L^2(\mathbb{T}^d)$ to some $u_0$ and let $\{\theta^N\}_{N\geq 1} \subset \ell^2$ be a family of symmetric coefficients satisfying
  \begin{equation*}
  \Vert \theta^N_\cdot\Vert_{\ell^2} = 1\quad \forall\, N,
  \qquad
  \lim_{N\to\infty} \Vert \theta^N_\cdot\Vert_{\ell^\infty}=0;
  \end{equation*}
denote by $u^N$ the unique strong solution to \eqref{PDE-cut-off} associated to $\theta^N$ starting at $u_0^N,\, N\geq 1$. Then,  for every $\varepsilon>0$ and every $p\geq 2$, $u^N$ converges in probability, in the topology of
  $$\mathcal X = L^2(0,T; H^{\alpha-\eps})\cap L^p(0,T; L^2)\cap C([0,T];H^{-\varepsilon}) ,$$
to the unique solution of the deterministic equation
  \begin{equation}\label{sec3 limit equation}
  \begin{cases}
  \partial_{t}u = -\Lambda^{2\alpha} u + \nu \Delta u +g_{R}(u) F(u),\\
  u(0)=u_0.
  \end{cases}
  \end{equation}
\end{proposition}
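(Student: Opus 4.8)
The plan is to carry out a tightness, identification, and uniqueness scheme. The crucial point is that the quantitative bound \eqref{sec3 estimate martingale} forces the martingale parts to vanish as $\|\theta^N\|_{\ell^\infty}\to 0$, so that every subsequential limit of $\{u^N\}$ solves the \emph{deterministic} equation \eqref{sec3 limit equation}; as the latter has a unique solution, the full sequence converges to it, and convergence in law to a deterministic limit upgrades to convergence in probability. It suffices to treat arbitrarily small $\varepsilon$, say $\varepsilon\le\delta$, since smaller $\varepsilon$ yields a stronger topology on $\mathcal X$ and the remaining cases follow by the continuous embeddings of the relevant spaces.

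First I would record uniform a priori bounds. Since $u_0^N\rightharpoonup u_0$ in $L^2$, the sequence $\{u_0^N\}$ is bounded in $L^2$, and the compact embedding $L^2\hookrightarrow H^{-\varepsilon}$ gives $u_0^N\to u_0$ strongly in $H^{-\varepsilon}$, which will match the initial data. By Proposition \ref{prop-existence}, each $u^N$ satisfies \eqref{sec3 energy bound} with a constant $C_1$ independent of $N$ (it does not depend on $\theta$); by \eqref{sec3 bound v} the drift parts $v^N$ are uniformly bounded in $W^{1,2}(0,T;H^{-\alpha})\hookrightarrow C^{1/2}([0,T];H^{-\alpha})$; and by \eqref{sec3 estimate martingale}, for suitable $p>1$, $\beta>p+1$, $\gamma<(p-1)/(2p)$,
\begin{equation*}
\mathbb{E}\Big[\Vert M^N\Vert_{C^\gamma H^{-\beta}}^{2p}\Big]\lesssim \Vert\theta^N\Vert_{\ell^\infty}^{2p}\big(1+\Vert u_0^N\Vert_{L^2}^2\big)^p\longrightarrow 0.
\end{equation*}
Combining these bounds with Lemma \ref{sec3 lemma compactness} and Prokhorov's theorem, the laws of $\{u^N\}$ are tight in $\mathcal X$, and by \eqref{sec3 energy bound} are all supported on a single $\mathcal X_K$.

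Next I would identify the limit. Along any subsequence, by tightness and Skorokhod's representation theorem one obtains, on an auxiliary probability space, copies $(\tilde u^N,\tilde M^N,\tilde W^N)$ of $(u^N,M^N,W)$ with $\tilde u^N\to\tilde u$ almost surely in $\mathcal X_K$ and $\tilde M^N\to 0$ almost surely in $C([0,T];H^{-\beta})$. For almost every $\omega$, the path $\tilde u^N(\omega)$ is bounded in $L^2(0,T;H^\alpha)$ and converges in $L^2(0,T;H^{\alpha-\varepsilon})$, hence converges weakly in $L^2(0,T;H^\alpha)$ to $\tilde u(\omega)$. Testing the decomposition \eqref{sec3 decomposition solution}--\eqref{sec3 martingale part} against $\phi\in H^\alpha$: the linear terms pass to the limit since, for fixed $\phi$, they are bounded linear functionals on $L^2(0,T;H^\alpha)$; the nonlinearity passes since $g_R(\cdot)=g_R(\Vert\cdot\Vert_{H^{-\delta}})$ is continuous on $C([0,T];H^{-\delta})$ (recall $\varepsilon\le\delta$) while $F$ is continuous from $\mathcal X_K$ into $L^q(0,T;H^{-\alpha})$ for $q<2$ by Lemma \ref{sec3 lemma continuity}; and the stochastic integral vanishes because $\tilde M^N\to 0$. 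Matching the initial datum through $u_0^N\to u_0$ in $H^{-\varepsilon}$, the limit $\tilde u$ solves \eqref{sec3 limit equation} and inherits the bound \eqref{sec3 energy bound}.

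Finally, uniqueness and the passage to convergence in probability. The deterministic equation \eqref{sec3 limit equation} has a unique solution in $C([0,T];L^2)\cap L^2(0,T;H^\alpha)$: repeating the energy computation of Step 3 of Proposition \ref{prop-existence} for the difference of two solutions, now with no martingale present, the hypotheses (H1) and (H3) give $\frac{\d}{\d t}\Vert u_1-u_2\Vert_{L^2}^2\lesssim \big(1+\Vert u_1\Vert_{H^\alpha}^2+\Vert u_2\Vert_{H^\alpha}^2\big)\Vert u_1-u_2\Vert_{L^2}^2$, and Gronwall's inequality forces $u_1\equiv u_2$. Hence every subsequential limit coincides with this unique deterministic solution $u$, so every subsequence of $\{u^N\}$ admits a further subsequence whose law converges to $\delta_u$; therefore the full sequence converges in law to $\delta_u$. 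Since $u$ is deterministic, this is equivalent to convergence in probability in $\mathcal X$, which is the claim. The main obstacle is the passage to the limit in the nonlinear term under the cut-off, for which tightness confining the solutions to $\mathcal X_K$ together with the continuity Lemma \ref{sec3 lemma continuity} is exactly tailored; the vanishing of the noise, by contrast, is already quantified by \eqref{sec3 estimate martingale}.
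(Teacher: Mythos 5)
Your proposal is correct: the overall scheme (uniform a priori bounds from \eqref{sec3 energy bound}, tightness in $\mathcal X_K$ via Lemma \ref{sec3 lemma compactness} and Prokhorov, identification of every subsequential limit with the unique deterministic solution, and the upgrade from convergence in law to a deterministic limit to convergence in probability) is exactly the one the paper follows. The one place where you take a genuinely different route is the identification step. You pass through Skorokhod's representation theorem, produce almost surely convergent copies $(\tilde u^N,\tilde M^N,\tilde W^N)$, and take the limit pathwise in the weak formulation; this requires verifying that the copies still solve the equation on the auxiliary space and extracting a further subsequence so that $\tilde M^N\to 0$ almost surely, both standard but nontrivial bookkeeping. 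The paper instead avoids Skorokhod entirely here: it introduces, for each test function $\phi$, the deterministic map $T^\phi:\mathcal X_K\to C([0,T];\mathbb{R})$ encoding the residual of the equation, observes that $T^\phi$ is continuous on $\mathcal X_K$ (by Lemma \ref{sec3 lemma continuity}), that $T^\phi u^N=\langle u_0^N-u_0,\phi\rangle+\langle M^N,\phi\rangle\to 0$ in probability by \eqref{sec3 estimate martingale}, and hence by the continuous mapping theorem the pushforward $T^\phi_\sharp\mu$ of any weak limit $\mu$ is $\delta_0$; a countable dense family of $\phi$'s then forces $\mu$ to be supported on solutions of \eqref{sec3 limit equation}. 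The pushforward argument is lighter precisely because the limit is deterministic, so there is no need to track the driving noise through the limit; your Skorokhod route is more robust (it would survive a stochastic limit equation) at the cost of the extra verification. Your explicit Gronwall argument for uniqueness of the deterministic equation is a detail the paper leaves implicit, and your reduction to $\varepsilon\le\delta$ to handle the continuity of the cut-off $g_R(\Vert\cdot\Vert_{H^{-\delta}})$ is a point the paper glosses over; both are welcome.
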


\begin{proof}
As the sequence $\{u^N_0\}_{N\geq 1}$ is weakly convergent in $L^2(\mathbb{T}^d)$, it is also bounded therein; therefore by bound \eqref{sec3 energy bound} we can find a deterministic constant $C$ such that
\begin{equation}\label{claim SPDE.1}
\sup_{N\in \mathbb{N}} \bigg( \sup_{t\in [0,T]} \Vert u^N(t)\Vert_{L^2}^2 + \int_0^T \Vert \Lambda^{\alpha} u^N(t)\Vert_{L^2}^2\, \d t\bigg)\leq C \quad  \P\mbox{-a.s.}
\end{equation}
Recall that any solution $u^N$ admits a decomposition $u^N= u_0+ v^N+ M^N$, with $v^N$ satisfying
  \begin{equation*}
  \Vert v^N\Vert_{C^{1/2} H^{-\alpha}} \leq \Vert v^N\Vert_{W^{1,2} H^{-\alpha}};
  \end{equation*}
therefore combining \eqref{sec3 bound v} and \eqref{sec3 estimate martingale} with estimate \eqref{claim SPDE.1} above, together with the fact that $\Vert\theta^N\Vert_{\ell^\infty}$ are bounded, we can find values $q>1$, $\beta,\gamma>0$ such that
  \[
  \sup_{N\geq 1} \mathbb{E}\big[ (\Vert u^N\Vert_{L^\infty L^2} + \Vert u^N\Vert_{L^2 H^\alpha} + \Vert u^N\Vert_{C^\gamma H^{-\beta}})^q \big]<\infty.
  \]
Thus, denoting by $\mu^N$ the law of the solution $u^N,\, N\geq 1$, by Lemma \ref{sec3 lemma compactness} the sequence $\{\mu^N\}_N$ is tight in $\mathcal X $; up to choosing $K$ big enough, by the bound \eqref{claim SPDE.1} it is therefore also tight in $\mathcal{X}_K$.  By Prokhorov's Theorem, we can extract a (not relabelled) subsequence such that $\mu^N$ converges weakly to some probability measure $\mu$ in $\mathcal{X}_K$.

In order to conclude, it is enough to show that $\mu=\delta_u$, where $u$ is the unique deterministic solution to \eqref{sec3 limit equation}. Indeed, as the reasoning applies to any weakly convergent subsequence, we deduce that the whole sequence converges in law to $\mu=\delta_u$, from which convergence in probability follows as well.

For any fixed $\phi\in C^\infty (\mathbb{T}^d)$, define a map $T^\phi: \mathcal X_K\to C([0,T];\mathbb{R})$ by
  \[\aligned
  (T^\phi f)(t) &:= \langle f(t),\phi\rangle-\langle u_0,\phi\rangle - \int_0^t g_R (f(s)) \langle F(f(s)),\phi\rangle\, \d s \\
  &\quad\  +\int_0^t \langle f(s),\Lambda^{2\alpha} \phi(s)\rangle\, \d s -\nu \int_0^t \langle f(s),\Delta \phi(s)\rangle\, \d s.
  \endaligned \]
It is immediate to check that, thanks to Lemma \ref{sec3 lemma continuity}, $T^\phi$ is continuous on $\mathcal X_K$; thus by properties of weak convergence it holds $T^\phi_\sharp \mu^N\to T^\phi_\sharp \mu$, where $T^\phi_\sharp \mu=\mu\circ (T^\phi)^{-1}$ denotes the pushforward measure of $\mu$ under $T^\phi$. On the other hand, $T^\phi_\sharp \mu^N$ is the law of $T^\phi u^N$, where by construction $T^\phi u^N=\langle u^N_0-u_0,\phi\rangle + \langle M^N,\phi\rangle$. Recall that $u^N_0$ converges weakly to $u_0$ as $N\to\infty$, and by \eqref{sec3 estimate martingale} it holds
  \[
  \mathbb{E}\bigg[ \sup_{t\in [0,T]} |\langle M^N(t),\phi\rangle|^p \bigg] \lesssim \Vert \phi\Vert_{H^\beta}^p\, \mathbb{E}\Big[ \Vert M^N\Vert_{C^\gamma H^{-\beta}}^p \Big] \lesssim \|\theta^N \|_{\ell^\infty}^p \to 0;
  \]
we deduce that, for any fixed $\phi$, the support of $T^\phi_\sharp \mu$ must be $\{0\}$, the singleton in $C([0,T];\mathbb{R})$. Applying the reasoning to a countable set $\{\phi^n\}_n\subset C^\infty(\mathbb{T}^2)$ dense in $H^\alpha$, we obtain
  \begin{equation*}
  \mu \big(\big\{ f \in \mathcal X_K: T^{\phi_n} f=0 \text{ for all } n\in \mathbb{N} \big\}\big) =0;
  \end{equation*}
but then by a density argument we can conclude that
  \begin{equation*}
  \mu \big( \big\{f\in \mathcal X_K: f \text{ solves } \eqref{sec3 limit equation} \big\}\big) =1
  \end{equation*}
which necessarily implies that $\mu= \delta_u$, where $u$ is the unique deterministic solution to \eqref{sec3 limit equation}. The conclusion then follows.
\end{proof}

Now we can complete the proof of the main result.

\begin{proof}[Proof of Theorem \ref{Thm main}]
Let $\eps, T>0$ be fixed; by hypothesis (H4), we can find $\nu,R>0$ such that
\begin{equation}\label{main thm proof eq1}
\sup_{u_0\in \mathcal K} \sup_{t\in [0,T]} \Vert u(t;u_0,\nu)\Vert_{L^2}\leq R-1;
\end{equation}
from now on such parameters $\nu$ and $R$ will be fixed. For any $\theta\in \ell^2$ with $\Vert \theta\Vert_{\ell^2}=1$, denote by $u^R(\,\cdot\,; u_0,\nu,\theta)$ the unique global stochastic solution to \eqref{PDE-cut-off} with initial data $u_0$; similarly, $u^R(\,\cdot\,; u_0,\nu)$ denotes the unique global deterministic solution to \eqref{sec3 limit equation}. By estimate \eqref{main thm proof eq1} it follows that
  \begin{equation} \label{main thm proof eq1.5}
  u^R(\,\cdot\,; u_0,\nu)=u(\,\cdot\,; u_0,\nu),\quad u^R(\,\cdot\,; u_0^l,\nu)=u(\,\cdot\,; u_0^l,\nu)\quad \forall\,l\in\mathbb{N}.
  \end{equation}

Next, consider a sequence $\{\theta^N\}\subset\ell^2$ satisfying the assumptions of Proposition \ref{claim SPDE}; we claim that
\begin{equation}\label{main thm proof eq2}
\lim_{N\to\infty} \sup_{u_0\in \mathcal K} \mathbb{P}\Big(\sup_{t\in [0,T]} \Vert u^R(t;u_0,\theta^N,\nu)-u(t;u_0,\nu) \Vert_{H^{-\delta}} >1\Big)=0.
\end{equation}
Indeed, suppose this is not the case; then we can find $\gamma>0$, a sequence $\{N_l\}_l\subset \mathbb{N}$ and a sequence $\{u_0^l\}_l\subset \mathcal K$ such that
\begin{equation}\label{main thm proof eq3}
\mathbb{P}\Big(\sup_{t\in [0,T]} \big\Vert u^R\big(t;u^l_0,\theta^{N_l},\nu \big)-u(t;u^l_0,\nu) \big\Vert_{H^{-\delta}} >1\Big)>\gamma>0 \quad \forall\, l\in\mathbb{N}.
\end{equation}
Since $\mathcal K$ is convex, closed and bounded, it is weakly compact in $L^2(\mathbb{T}^d)$, thus we can find a subsequence (not relabelled for simplicity) such that $u_0^l$ converge weakly to $u_0\in \mathcal K$. It follows from Proposition \ref{claim SPDE} that $u^R(\,\cdot\,;u^l_0,\theta^N,\nu)$ converge in probability in $C([0,T];H^{-\delta})$ to $u^R(\,\cdot\,;u_0,\nu)$ as $l\to\infty$; similarly, it is easy to check that $u^R(\,\cdot\,;u_0^l,\nu)$ converge to $u^R(\,\cdot\,;u_0;\nu)$ in $C([0,T];H^{-\delta})$. We deduce from \eqref{main thm proof eq1.5} that $u^R(\,\cdot\,;u^l_0,\theta^N,\nu)-u(\,\cdot\,;u^l_0,\nu)$ converges to $0$ in probability in $C([0,T];H^{-\delta})$, which is in contradiction with \eqref{main thm proof eq3}. Thus \eqref{main thm proof eq2} holds.

As a consequence of \eqref{main thm proof eq1} and \eqref{main thm proof eq2} we deduce that
\begin{align*}
&\, \lim_{N\to\infty}\sup_{u_0\in \mathcal K} \mathbb{P} \Big(\sup_{t\in [0,T]} \Vert u^R(t;u_0,\theta^N,\nu)\Vert_{H^{-\delta}} >R\Big)\\
\leq &\, \lim_{N\to\infty} \sup_{u_0\in \mathcal K} \mathbb{P}\Big(\sup_{t\in [0,T]} \Vert u^R(t;u_0,\theta^N,\nu)-u(t;u_0,\nu) \Vert_{H^{-\delta}} >1\Big)=0,
\end{align*}
and hence we can find $N$ big enough such that, uniformly over $u_0\in \mathcal K$, it holds
\begin{equation*}
\mathbb{P} \Big( g_R\big( u^R\big(t; u_0,\theta^N,\nu\big)\big)=1 \text{ for all } t\in [0,T] \Big)
= \mathbb{P} \Big(\sup_{t\in [0,T]} \Vert u^R(t;u_0,\theta^N,\nu)\Vert_{H^{-\delta}} \leq R\Big)
>1-\varepsilon.
\end{equation*}
The condition ``$g_{R}(u^R(t;u_0,\theta^N,\nu))  =1$ for all
$t\in\left[  0,T\right]  $'' means that $u^R(\,\cdot\,;u_0,\theta^N,\nu)$ solves, on
$\left[  0,T\right]  $, the stochastic equation without cut-off. This proves
Theorem \ref{Thm main}.
\end{proof}

\section{Appendix - A triviality result}

It is shown in Section \ref{sec3} that given any $\theta\in\ell^2$, the associated stochastic equation with cut-off \eqref{PDE-cut-off} is well posed, with a priori estimates \eqref{sec3 energy bound}, \eqref{sec3 estimate martingale} which do not depend on $\Vert \theta\Vert_{\ell^2}$ but only on $\Vert \theta\Vert_{\ell^\infty}$. Therefore, in principle, instead of taking the limit as $\theta^N\to 0$ in $\ell^\infty$ with $\theta^N$ bounded in $\ell^2$, we could investigate the opposite regime in which $\theta^N$ stay bounded in $\ell^\infty$ with $\Vert \theta^N\Vert_{\ell^2}\to \infty$. We show that in this case any limit is necessarily trivial, i.e. spatially constant, similarly to what has been established in a different setting in \cite[Theorem 1.3]{FL19b}.

For simplicity we only treat the case of an $\mathbb{R}$-valued solution of \eqref{intro SPDE} with $\alpha=1$, but the result easily generalizes to the case of vector-valued solutions and to different values $\alpha\geq 1$.

\begin{theorem}\label{appendix main theorem}
Let $\{\theta^N\}_N\subset \ell^2(\mathbb{Z}^d_0)$ be a sequence of symmetric coefficients such that
\begin{equation}\label{appendix assumption coefficients}
\sup_N \Vert \theta^N\Vert_{\ell^\infty}<\infty, \quad
\lim_{N\to\infty} \Vert \theta^N\Vert_{\ell^2}=+\infty.
\end{equation}
Fix $u_0\in L^2(\mathbb{T}^d)$ and denote by $u^N$ the unique global solution in $C(0,T;L^2)\cap L^2(0,T;H^1)$ to
\begin{equation}\label{appendix SPDE}
\begin{cases}
\d u^N = \big[ \Delta u^N + g_{R}(u^N) F(u^N)\big]\,\d t
+ \sqrt{C_d\nu} \sum\limits_{k,i} \theta^N_{k}\, \sigma_{k,i}\cdot\nabla u^N \circ \d W_t^{k,i},\\
u^N(0)= u_{0},
\end{cases}
\end{equation}
where $\nu,R>0$ are fixed parameters. Then $\{u^N\}_N$ is a.s. bounded in $C(0,T;L^2)\cap L^2(0,T;H^1)$, and any weakly-$\ast$ convergent subsequence of $\{u^N\}_N$ converges to some trivial limit $u$ which is a spatially constant process on $(0,T)$.
\end{theorem}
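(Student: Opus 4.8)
The plan is to exploit two features of transport noise: it conserves the $L^2$ energy pathwise (so the a priori bound does not feel the noise intensity), while in It\^o form it generates an enhanced dissipation of order $\nu\|\theta^N\|_{\ell^2}^2\to\infty$, which annihilates every non-constant Fourier mode in the limit.

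First I would pass to the It\^o form of \eqref{appendix SPDE}. By the key identity \eqref{key-identity}, writing $M_N:=\|\theta^N\|_{\ell^2}^2$, the Stratonovich--It\^o corrector is $\nu M_N\Delta u^N$, so
\[
\d u^N = \big[(1+\nu M_N)\Delta u^N + g_R(u^N)F(u^N)\big]\,\d t + \sqrt{C_d\nu}\sum_{k,i}\theta^N_k\,\sigma_{k,i}\cdot\nabla u^N\,\d W^{k,i}_t.
\]
Applying It\^o's formula to $\|u^N\|_{L^2}^2$, the stochastic integral vanishes pathwise since $\langle u^N,\sigma_{k,i}\cdot\nabla u^N\rangle=0$ (the fields are divergence free), and the quadratic variation of the martingale part equals $2\nu M_N\|\nabla u^N\|_{L^2}^2\,\d t$, which exactly cancels the enhanced dissipation $-2\nu M_N\|\nabla u^N\|_{L^2}^2\,\d t$. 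One is left with the purely deterministic identity
\[
\frac{\d}{\d t}\|u^N\|_{L^2}^2 = -2\|\nabla u^N\|_{L^2}^2 + 2g_R(u^N)\langle F(u^N),u^N\rangle,
\]
valid $\P$-a.s. Bounding the nonlinearity through (H2$'$) and the cut-off exactly as in \eqref{prop-existence.1}--\eqref{prop-existence.2} gives a differential inequality with constants independent of $N$, and Gronwall then yields the a.s. bound in $C(0,T;L^2)\cap L^2(0,T;H^1)$ uniform in $N$. This proves the first assertion.

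For the second assertion I would test the equation against a fixed $\phi\in C^\infty(\T^d)$ and isolate the dominant term:
\[
(1+\nu M_N)\int_0^t\langle u^N(s),\Delta\phi\rangle\,\d s = \langle u^N(t)-u_0,\phi\rangle - \int_0^t g_R(u^N)\langle F(u^N),\phi\rangle\,\d s - \langle M^N(t),\phi\rangle.
\]
The point is that every term on the right is bounded uniformly in $N$: the first by the $C(0,T;L^2)$ bound; the second by (H1) together with the $L^2(0,T;H^1)$ bound; and the martingale $\langle M^N(t),\phi\rangle$ in $L^2(\Omega)$, since $[\langle M^N,\phi\rangle](t)\le 2C_d\nu\|\theta^N\|_{\ell^\infty}^2\|\nabla\phi\|_{L^\infty}^2\int_0^t\|u^N\|_{L^2}^2\,\d s$ is controlled through $\sup_N\|\theta^N\|_{\ell^\infty}<\infty$ (this is precisely the estimate already exploited in Proposition \ref{prop-existence}, only now used as a boundedness rather than a vanishing statement). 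Dividing by $1+\nu M_N\to\infty$ therefore forces
\[
\int_0^t\langle u^N(s),\Delta\phi\rangle\,\d s\longrightarrow 0 \quad\text{in } L^1(\Omega),\ \text{for each } t\in[0,T],\ \phi\in C^\infty(\T^d).
\]

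Finally I would pass to the limit along the given subsequence $u^{N_j}\rightharpoonup u$ weakly-$\ast$. For a.e. $\omega$, weak-$\ast$ convergence in $L^\infty(0,T;L^2)$ tested against $\mathbf 1_{[0,t]}\Delta\phi\in L^1(0,T;L^2)$ gives $\int_0^t\langle u^{N_j},\Delta\phi\rangle\,\d s\to\int_0^t\langle u,\Delta\phi\rangle\,\d s$; comparing this with the $L^1(\Omega)$-limit above (extracting a further sub-subsequence along which the gradient term converges a.s. to $0$) yields $\int_0^t\langle u(s),\Delta\phi\rangle\,\d s=0$ for all $t$, hence $\langle u(s),\Delta\phi\rangle=0$ for a.e.\ $s$. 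Running $\phi$ over a countable dense family in $H^2(\T^d)$ gives $\Delta u(s)=0$ in $\mathcal D'(\T^d)$ for a.e.\ $s$, so $u(s,\cdot)$ is spatially constant, as claimed. The main obstacle here is conceptual rather than computational: in contrast with Proposition \ref{claim SPDE}, the martingale term does \emph{not} vanish in this regime; it is enough that it stay bounded, so that it is killed by the diverging factor $M_N$. The only genuine bookkeeping is reconciling the pathwise weak-$\ast$ convergence with the probabilistic convergence of the gradient term, which is handled by the sub-subsequence extraction just described.
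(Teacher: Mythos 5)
Your proposal is correct and follows essentially the same route as the paper: a uniform-in-$N$ pathwise energy bound coming from the exact cancellation between the It\^o--Stratonovich corrector and the quadratic variation, followed by dividing the weak formulation by the diverging coefficient $1+\nu\Vert\theta^N\Vert_{\ell^2}^2$ and checking that the remaining terms (including the martingale, controlled via $\sup_N\Vert\theta^N\Vert_{\ell^\infty}<\infty$) stay bounded. The only cosmetic differences are that the paper tests against the Fourier modes $e_k$, $k\in\mathbb{Z}^d_0$, and handles the identification of the limit by integrating by parts against $f\in C^1_c((0,T))$ and pairing with $\xi\in L^\infty(\Omega)$ (taking the weak-$\ast$ convergence jointly in $(\omega,t)$), whereas you use general $\phi$ with $\Delta\phi$ and a pathwise weak-$\ast$ limit reconciled by a sub-subsequence extraction; both are sound.
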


Unfortunately, we are unable to show that the constant is independent of $t\in (0,T)$, unless we assume that $\int_{\T^d} F(u)\,\d x =0$ for any $u\in H^1$.

\begin{proof}[Proof of Theorem \ref{appendix main theorem}]
By \eqref{sec3 energy bound} we can find a deterministic constant $K$ such that
\begin{equation}\label{proof-appendix-1}
\sup_N \Big( \sup_{t\in [0,T]} \Vert u^N(t)\Vert_{L^2} + \Vert u^N\Vert_{L^2 H^1} \Big)\leq K\quad \mathbb{P}\text{-a.s.}
\end{equation}
Let $\{u^{N_i}\}_{i\geq 1}$ be any weakly-$\ast$ convergent subsequence in $L^\infty\big(\Omega, L^\infty(0,T; L^2(\T^d))\big)$ with a limit $u$; then, for any $\xi\in L^\infty(\Omega, \mathcal F, \P)$, $f\in C([0,T])$ and $\phi\in L^2(\T^d)$, one has
  \begin{equation}\label{proof-appendix-1.5}
  \lim_{i\to\infty} \E\bigg[\xi \int_0^T f(t) \<u^{N_i}(t), \phi\>\,\d t \bigg] = \E\bigg[\xi \int_0^T f(t) \<u(t), \phi\>\,\d t \bigg].
  \end{equation}

Rewriting equation \eqref{appendix SPDE} in It\^o integral weak form, $u^{N_i}$ satisfies, for any $\phi\in C^2(\T^d)$,
\begin{align*}
\langle u^{N_i}(t),\phi\rangle - \langle u_0,\phi\rangle
& = \big(1+\nu \Vert \theta^{N_i}\Vert_{\ell^2}^2 \big) \int_0^t  \langle u^{N_i}(s), \Delta \phi \rangle\, \d s \\
& \quad + \int_0^t g_R(u^{N_i}(s)) \langle F(u^{N_i}(s)), \phi\rangle\, \d s + \langle M^{N_i} (t),\phi\rangle.
\end{align*}
Setting $\lambda^{N_i}:= 4\pi^2 \big(1+\nu \Vert \theta^{N_i}\Vert_{\ell^2}^2 \big)$, choosing $\phi(x)=e_k(x)=e^{2\pi {\rm i} k\cdot x}$ with $k\in \Z^d_0$ and rearranging the terms, we get
\begin{align*}
\int_0^t \langle u^{N_i}(s),e_k\rangle \,\d s
& = \frac1{\lambda^{N_i} |k|^2} \bigg[ \langle u_0- u^{N_i}(t),e_k\rangle + \int_0^t g_R(u^{N_i}(s)) \langle F(u^{N_i}(s)), e_k \rangle\, \d s \bigg]\\
&\quad +\frac1{\lambda^{N_i} |k|^2} \langle M^{N_i}(t), e_k\rangle,
\end{align*}
so that
\begin{equation}\label{proof-appendix-2}
\aligned
&\quad\ \mathbb{E}\bigg[\sup_{t\in [0,T]} \Big| \int_0^t \langle u^{N_i}(s),e_k\rangle\, \d s\Big| \bigg] \\
& \leq \frac1{\lambda^{N_i} |k|^2} \mathbb{E}\bigg[ \sup_{t\in [0,T]} \big|\langle u_0- u^{N_i}(t) , e_k\rangle \big| + \int_0^T g_R(u^{N_i}(s)) \big| \langle F(u^{N_i}(s)), e_k \rangle \big|\, \d s \bigg] \\
&\quad + \frac1{\lambda^{N_i} |k|^2} \E\bigg[\sup_{t\in [0,T]} \big|\langle M^{N_i}(t), e_k\rangle \big| \bigg].
\endaligned
\end{equation}
By \eqref{proof-appendix-1}, we have $|\langle u^{N_i}(t),e_k\rangle| \leq \|u^{N_i}(t) \|_{L^2} \leq K$ $\P$-a.s. for all $t\in [0,T]$; and by hypothesis (H1), it holds that
\begin{align*}
\int_0^T g_R(u^{N_i}(s)) \big| \langle F(u^{N_i}(s)), e_k \rangle \big|\, \d s
& \lesssim |k| \int_0^T \Vert F(u^{N_i}(s))\Vert_{H^{-1}} \d s\\
&\lesssim |k| \int_0^T \big(1+\Vert u^{N_i}(s)\Vert_{H^1} \big) \big(1+\Vert u^{N_i}(s)\Vert_{L^2}^{\beta_1} \big)\, \d s \\
&\lesssim |k| (1+K^{1+\beta_1}).
\end{align*}
Thus, the first term on the right-hand side of \eqref{proof-appendix-2} is dominated by
\begin{equation*}
\frac1{\lambda^{N_i} |k|^2} \big[2K + |k| (1+K^{1+\beta_1})\big] \lesssim \frac{1+K^{1+\beta_1}}{\lambda^{N_i} |k|} \to 0 \quad \text{ as } i \to \infty
\end{equation*}
since $\lambda^{N_i}\to \infty$ by assumption \eqref{appendix assumption coefficients}.

Next, notice that
  $$\langle M^{N_i}(t), e_k\rangle = -\sqrt{C_d \nu} \sum_{l,j} \theta^{N_i}_l \int_0^t \<u^{N_i}(s), \sigma_{l,j}\cdot \nabla e_k\> \,\d W^{l,j}_s, $$
where $l$ runs over $\Z^d_0$ and $j$ over $\{1,\ldots, d-1\}$. We have by Burkholder's inequality,
  $$\E\bigg[\sup_{t\in [0,T]} \big|\langle M^{N_i}(t), e_k\rangle \big| \bigg] \leq C_{d,\nu}\, \E \bigg[ \Big(\int_0^T \sum_{l,j} \big(\theta^{N_i}_l \big)^2 \big|\<u^{N_i}(s), \sigma_{l,j}\cdot \nabla e_k\>\big|^2 \,\d s \Big)^{1/2} \bigg]. $$
Using the fact that $\{\sigma_{l,j} \}_{l,j}$ is an (incomplete) orthonormal system in $L^2(\T^d)$, we obtain
  $$\aligned
  \sum_{l,j} \big(\theta^{N_i}_l \big)^2 \big|\<u^{N_i}(s), \sigma_{l,j}\cdot \nabla e_k\>\big|^2 &\leq \|\theta^{N_i} \|_{\ell^\infty}^2 \sum_{l,j} \big|\<u^{N_i}(s)\nabla e_k, \sigma_{l,j} \>\big|^2 \\
  & \leq \|\theta^{N_i} \|_{\ell^\infty}^2  \|u^{N_i}(s)\nabla e_k\|_{L^2}^2 \lesssim \|\theta^{N_i} \|_{\ell^\infty}^2 |k|^2 \|u^{N_i}(s) \|_{L^2}^2.
  \endaligned $$
Therefore,
  $$\E\bigg[\sup_{t\in [0,T]} \big|\langle M^{N_i}(t), e_k\rangle \big| \bigg] \leq C_{d,\nu} \|\theta^{N_i} \|_{\ell^\infty} |k|\, \E\big( \|u^{N_i} \|_{L^2 L^2} \big) \leq C_{d,\nu} \|\theta^{N_i} \|_{\ell^\infty} |k| K,  $$
where the last step follows from \eqref{proof-appendix-1}. As a result, using the definition of $\lambda^{N_i}$,
  $$\frac1{\lambda^{N_i} |k|^2} \E\bigg[\sup_{t\in [0,T]} \big|\langle M^{N_i}(t), e_k\rangle \big| \bigg] \leq \frac{C_{d,\nu} \|\theta^{N_i} \|_{\ell^\infty} K}{4\pi^2 \big(1+\nu \Vert \theta^{N_i}\Vert_{\ell^2}^2 \big) |k|}, $$
which, by \eqref{appendix assumption coefficients}, vanishes as $i\to \infty$. To sum up, we have shown that both terms on the right-hand side of \eqref{proof-appendix-2} tend to 0 as $i\to \infty$, and hence
  \begin{equation}\label{proof-appendix-3}
  \lim_{i\to \infty} \mathbb{E}\bigg[\sup_{t\in [0,T]} \Big| \int_0^t \langle u^{N_i}(s),e_k\rangle\, \d s\Big| \bigg] =0 \qquad \forall\,k\in\mathbb{Z}^d_0.
  \end{equation}

Now, for any $f\in C_c^1((0,T))$, integrating by parts  yields
  $$\aligned \int_0^T f(t) \<u^{N_i}(t), e_k\>\,\d t &= \Big( f(t) \int_0^t\<u^{N_i}(s), e_k\>\,\d s\Big)\Big|_{t=0}^T - \int_0^T f'(t) \Big( \int_0^t\<u^{N_i}(s), e_k\>\,\d s\Big) \,\d t \\
  &=  - \int_0^T f'(t) \Big( \int_0^t\<u^{N_i}(s), e_k\>\,\d s\Big) \,\d t.
  \endaligned $$
Next, since
  $$\bigg| \int_0^T f'(t) \Big( \int_0^t\<u^{N_i}(s), e_k\>\,\d s\Big) \,\d t\bigg| \leq \|f'\|_{L^1((0,T))} \sup_{t\in [0,T]} \Big| \int_0^t\<u^{N_i}(s), e_k\>\,\d s\Big|, $$
we have, for any $\xi\in L^\infty(\Omega, \mathcal F, \P)$,
  $$\aligned
  \bigg|\E\bigg[\xi \int_0^T f(t) \<u^{N_i}(t), e_k\>\,\d t \bigg] \bigg| &\leq \|\xi \|_{L^\infty(\Omega)}\, \E \Big|\int_0^T f(t) \<u^{N_i}(t), e_k \>\,\d t \Big| \\
  &\leq \|\xi \|_{L^\infty(\Omega)} \|f'\|_{L^1((0,T))}\, \E\bigg[ \sup_{t\in [0,T]} \Big| \int_0^t\<u^{N_i}(s), e_k\>\,\d s\Big| \bigg]
  \endaligned $$
which, by \eqref{proof-appendix-3}, vanishes as $i\to \infty$. Combining this with \eqref{proof-appendix-1.5} gives us
  $$\E\bigg[\xi \int_0^T f(t) \<u(t), e_k\>\,\d t \bigg]=0. $$
The arbitrariness of $\xi\in L^\infty(\Omega, \mathcal F, \P)$ implies that, $\P$-a.s.,
  $$\int_0^T f(t) \<u(t), e_k\>\,\d t =0 $$
for any $f\in C_c^1((0,T))$ and $k\in \Z^d_0$. Since $\Z^d_0$ is countable, and taking a countable set $\{f_n\}_n$ which is dense in $ C_c^1((0,T))$,  we conclude that, $\P$-a.s., for a.e. $t\in (0,T)$, $\<u(t), e_k\> =0$ for all $k\in \Z^d_0$. This shows the triviality of the limit $u$.
\end{proof}

\bigskip

\noindent \textbf{Acknowledgements.} The authors are very grateful to the referee for reading carefully the paper and for many valuable comments. The last named author would like to thank the financial supports of the National Key R\&D Program of China (No. 2020YFA0712700) and the National Natural Science Foundation of China (Nos. 11688101, 11931004, 12090014).

\end{document}